\DeclareRobustCommand{\em}{\@nomath\em\if b\expandafter\@car\f@series\@nil\bfseries\else\itshape\fi}
\def\TF{black!4}
\def\RF{black!48}
\def\FF{black!16}
\def\vsigma{\vec\sigma}
\def\valpha{\vec\alpha}
\def\vbeta{\vec\beta}
\newcommand{\vgamma}{\vec\gamma}
\newcommand{\vTheta}{\vec\Theta}
\newcommand{\vXi}{\vec\Xi}
\def\vd{\vec d}
\def\vk{\vec k}
\newcommand{\vh}{\vec h}
\newcommand{\vr}{\vec r}
\newcommand{\va}{\vec a}
\newcommand{\vb}{\vec b}
\newcommand{\vc}{\vec c}
\newcommand{\ve}{\vec e}
\newcommand{\vw}{\vec w}
\newcommand{\vz}{\vec z}
\newcommand{\vs}{\vec s}
\newcommand{\vx}{\vec x}
\newcommand{\vy}{\vec y}
\newcommand{\vi}{\vec i}
\def\texorpdfstring#1#2{#1}
\newtheorem{definition}{Definition}[section]
\newtheorem{proposition}[definition]{Proposition}
\newtheorem{lemma}[definition]{Lemma}
\newtheorem{theorem}[definition]{Theorem}
\newtheorem{corollary}[definition]{Corollary}
\theoremstyle{remark}\newtheorem{remark}[definition]{Remark}
\theoremstyle{remark}\newtheorem{example}[definition]{Example}
\tikzset{shape cross style/.style= {color = red!90!black, draw, line width = .09cm, inner xsep = 0.06cm, inner ysep = 0.06cm} }
\newcounter{lastH}\setcounter{lastH}{0}
\newcounter{subH}
\newlist{hlist}{enumerate}{1}
\setlist[hlist]{label={H\arabic*},resume=hlist}
\newcounter{lastE}\setcounter{lastE}{0}
\newlist{elist}{enumerate}{1}
\setlist[elist]{label={E\arabic*},resume=elist}
\newcommand{\hitem}[2]{%
\stepcounter{lastH}%
\newcounter{#1}\setcounter{#1}{0}%
\expandafter\gdef\csname L#1\endcsname{#2}%
\expandafter\xdef\csname N#1\endcsname{\thelastH}%
\item[${\bf\mathrm{H}}^{\bf\thelastH}_{#2}$]%
\def\@currentlabel{{${\bf\mathrm{H}}^{\bf\thelastH}_{#2}$}}\label{H:#1}%
}
\newcommand{\hsubitem}[3]{%
\stepcounter{#3}%
\item[${\bf\mathrm{H}}^{\bf\csname N#3\endcsname\Alph{#3}}_{#2}$]%
\def\@currentlabel{{${\bf\mathrm{H}}^{\bf\csname N#3\endcsname\Alph{#3}}_{#2}$}}\label{H:#1}%
}
\newcommand{\lritem}[2]{\stepcounter{lastH}\item[$\bf\mathrm{LR}^{\bf\thelastH}_{#2}$]\def\@currentlabel{{$\bf\mathrm{LR}^{\bf\thelastH}_{#2}$}}\label{LR:#1}\setcounter{subH}{1}}
\newcommand{\hbitem}[2]{\stepcounter{lastH}\item[$\bf\mathrm{HB}^{\bf\thelastH}_{#2}$]\def\@currentlabel{{$\bf\mathrm{HB}^{\bf\thelastH}_{#2}$}}\label{HB:#1}\setcounter{subH}{1}}
\newcommand\elabel[1]{\def\@currentlabel{E\thelastE}\label{#1}\tag{E\thelastE}\stepcounter{lastE}}
\newlength{\hatchspread}
\newlength{\hatchthickness}
\newlength{\hatchshift}
\newcommand{\hatchcolor}{}
\tikzset{hatchspread/.code={\setlength{\hatchspread}{#1}},
         hatchthickness/.code={\setlength{\hatchthickness}{#1}},
         hatchshift/.code={\setlength{\hatchshift}{#1}},
         hatchcolor/.code={\renewcommand{\hatchcolor}{#1}}}
\tikzset{hatchspread=3pt,
         hatchthickness=0.4pt,
         hatchshift=0pt,
         hatchcolor=black}
\newcounter{NoTableEntry}
\renewcommand*{\theNoTableEntry}{NTE-\the\value{NoTableEntry}}
\newcommand*{\notableentry}{%
  \multicolumn{1}{@{}c@{}|}{%
    \stepcounter{NoTableEntry}%
    \vadjust pre{\zsavepos{\theNoTableEntry t}}
    \vadjust{\zsavepos{\theNoTableEntry b}}
    \zsavepos{\theNoTableEntry l}
    \hspace{0pt plus 1filll}%
    \zsavepos{\theNoTableEntry r}
    \tikz[overlay]{%
      \draw[red]
        let
          \n{llx}={\zposx{\theNoTableEntry l}sp-\zposx{\theNoTableEntry r}sp},
          \n{urx}={0},
          \n{lly}={\zposy{\theNoTableEntry b}sp-\zposy{\theNoTableEntry r}sp},
          \n{ury}={\zposy{\theNoTableEntry t}sp-\zposy{\theNoTableEntry r}sp}
        in
        (\n{llx}, \n{lly}) -- (\n{urx}, \n{ury})
      ;
    }%
  }%
}
\newcommand{\R}{\mathbb{R}}
\newcommand{\C}{\mathbb{C}}
\newcommand{\N}{\mathbb{N}}
\newcommand{\Z}{\mathbb{Z}}
\newcommand{\aop}{\aleph}
\newcommand{\paop}{\Pi}
\newcommand{\abs}[1]{\left\lvert {#1} \right\rvert}
\newcommand{\cabs}[1]{\lvert {#1} \rvert}
\newcommand{\babs}[1]{\big\lvert {#1} \big\rvert}
\newcommand{\measure}[1]{\mu({#1})}
\newcommand{\cardinality}[1]{\# {#1} }
\newcommand{\norm}[1]{\left\lVert {#1} \right\rVert}
\newcommand{\cnorm}[1]{\lVert {#1} \rVert}
\newcommand{\bnorm}[1]{\big\lVert {#1} \big\rVert}
\newcommand{\Bnorm}[1]{\Big\lVert {#1} \Big\rVert}
\newcommand{\onorm}[1]{|\kern-0.25ex|\kern-0.25ex|#1|\kern-0.25ex|\kern-0.25ex|}
\newcommand{\ceil}[1]{\left \lceil{#1}\right \rceil}
\newcommand{\floor}[1]{\left \lfloor{#1}\right \rfloor}
\newcommand{\thus}{\ensuremath\Rightarrow}
\def\iff{\ensuremath\Leftrightarrow}
\DeclareMathOperator*{\esssup}{ess\,sup}
\DeclareMathOperator{\support}{supp}
\DeclareMathOperator*{\maxv}{\mathbf{max}}
\DeclareMathOperator*{\minv}{\mathbf{min}}
\DeclareRobustCommand{\icircle}{\tikz [anchor=base, baseline]{\draw (0,0) circle (.25ex);}}
\newcommand{\interior}[1]{\overset{\icircle}{#1}}
\newcommand{\closure}[1]{\overline{#1}}
\DeclareMathOperator{\diam}{diam}
\DeclareMathOperator{\esupp}{{\mathtt s}}
\DeclareMathOperator{\SPAN}{span}
\DeclareMathOperator{\bbox}{\mathtt b}
\newcommand{\ind}{{\mathbbm{1}}} 
\def\boundary{\partial}
\newcommand{\mesh}{\mathcal{M}}
\DeclareMathOperator{\extension}{ext}
\renewcommand{\vec}[1]{\bm{#1}}
\DeclareFontFamily{U}{mathx}{\hyphenchar\font45}
\DeclareFontShape{U}{mathx}{m}{n}{
  <5> <6> <7> <8> <9> <10>
  <10.95> <12> <14.4> <17.28> <20.74> <24.88>
  mathx10
}{}
\DeclareSymbolFont{mathx}{U}{mathx}{m}{n}
\DeclareMathSymbol{\bigtimes}{1}{mathx}{"91}
\newcommand{\blossom}{\mathfrak c}
\newcommand{\poly}{\mathbb{P}}    
\newcommand{\spline}{\mathbb{S}}
\newcommand{\ppoly}{\mathbb{K}}
\newcommand{\contained}{\mathtt M}
\newcommand{\actives}{\mathtt A}
\newcommand{\actived}{\mathtt E}
\newcommand{\smooth}{\mathcal{C}}
\newcommand{\base}[1]{#1_B}
\newcommand{\vmax}[1]{\max #1}
\newcommand{\vmin}[1]{\min #1}
\DeclareRobustCommand{\pmesh}{\tikz \draw[very thin,step=.075] (0,0) grid (.15,.15);}
\DeclareRobustCommand{\pshapeO}{\tikz{ \node[fill=gray!05,draw=black,very thin,rectangle, inner sep=.9]{\small$\omega$\!};}}
\DeclareRobustCommand{\pshapeA}{\tikz{ \node[fill=gray!05,draw=black,very thin,rectangle, inner sep=.9]{\small$\esupp$};}}
\DeclareRobustCommand{\Gconcave}{\tikz{\fill (0,0) -- (1ex,0)--(1ex,.5ex)--(.5ex,.5ex)--(.5ex,1ex)--(0ex,1ex);}}
\def\Ctot{C}
\def\Capprox{C_{\Pi}}
\def\Clambda{C_{\lambda}}
\def\Cphi{C_{\Phi}}
\def\Cesupp{C_{\esupp}}
\def\Cmesh{C_{\pmesh}}
\def\Cactives{C_{\actived}}
\def\Cduals{C_{\actived}}
\def\Cnum{C_{\#}}
\def\Clength{C_{\mathrm m}}
\def\CisotropicA{C_{\pshapeA}}
\def\CisotropicO{C_{\pshapeO}}
\def\Cisotropic{\CisotropicA}
\def\GrowthTAG{e}
\def\Cgrowth{C_\GrowthTAG}
\def\Bgrowth{B_\GrowthTAG}
\def\Egrowth{{S_\GrowthTAG}}
\def\Cknots{C_\Xi}
\def\Cgraded{C_g}
\def\he{\vec h_\eta}
\def\hei#1{h_{\eta,#1}}
\def\hz{\vec h_{\zeta_\phi}}
\def\ho{\vec h_\omega}
\def\hoi#1{h_{\omega,#1}}
\def\hp{\vec h_\phi}
\def\hpi#1{h_{\phi,#1}}
\def\ha{\vec h_{\esupp_\phi}}
\def\hai#1{h_{\esupp_\phi,#1}}
\def\hmd{\mathtt{h}_{\Phi_d}}
\def\hm{\mathtt{h}_{\Phi}}
\def\hmi#1{\mathtt{h}_{\Phi,#1}}
\def\hof{\mathtt{h}_\mesh}
\def\hofi#1{\mathtt{h}_{\mesh,#1}}
\begin{document}
\title{Local approximation operators on box meshes}
\author{Andrea Bressan \and Tom Lyche}


\maketitle

\begin{abstract}
This paper analyzes the approximation properties of spaces of piecewise tensor product polynomials over box meshes with a focus on application to IsoGeometric Analysis (IGA).
Local and global error bounds with respect to Sobolev or reduced seminorms are provided.
Attention is also paid to the dependence on the degree and exponential convergence is proved for the approximation of analytic functions in the absence of non-convex extended supports.
\end{abstract}

\section{Introduction}

Let $\Omega$ be a union of axis aligned boxes in $\R^n$ and $\spline$ a space of piecewise polynomials of degree $\vd:=(d_1,\dots, d_n)$ on $\Omega$. We use a local approximation operator $\aop:L^p(\Omega)\to \spline$, which reproduces polynomials of degree $\vd$, to derive a priori approximation estimates for how well a function $f$ can be approximated by a function in $\spline$. Bounds for the approximation error on both the mesh elements and on $\Omega$ are provided.
The $L^p$, $L^q$ estimates on $\Omega$ have the following form 
\begin{equation}
\label{eq:abstract-form}
\norm{\partial^{\vsigma}(f-\aop f)}_{p}\le\Ctot\sum_{\vk\in K}\cnorm{\rho_{\vk}\partial^{\vk} f}_q,
\end{equation}
where $K\subset\N^n $ is an index set of integers and the weights $\rho_{\vk}$ are powers of the local resolution of $\spline$. The precise form of the $\rho_{\vk}$ depends on $K$, $\vk$, $p$, $q$, $\vd$ and ${\vsigma}$.
The constant $\Ctot$ depends on a subset of some constant $\Clambda, \dots, \CisotropicA$, which come from the abstract assumptions \ref{H:polynomial},\dots,\ref{H:isotropicA}; see Table \ref{table:hypothesis} at the end of the paper.

Different index sets $K$ are possible, we focus on $K=\{\vk\in\N^n:\abs{\vk}=d+1\}$ that corresponds to Sobolev seminorms and on index sets $K$ that corresponds to \emph{reduced} seminorms that involve a smaller set of partial derivatives.
For $\vsigma=\vec0$ the reduced seminorm leads to bounds in term of the partial derivatives in the coordinate directions
\begin{equation}
\label{eq:abstract-form-reduced}
\bnorm{f-\aop f}_{p}\le\Ctot\sum_{i=1}^n\bnorm{\rho_{i}\partial^{k_i}_i f}_q.
\end{equation}

For Sobolev seminorms and $p\le q$ we are able to weaken the usual \emph{mesh quasi uniformity} and \emph{mesh shape regularity} assumptions matching those for tensor product splines. This can be seen by comparing our assumptions \ref{H:mesh-num} and \ref{H:isotropicA} with the corresponding assumptions in \cite{MR3585787}\cite{MR3627466}\cite{MR3439218}.

For $1\le p\le q$ and $\vsigma=\vec0$ we obtain \emph{anisotropic} estimates that take into consideration the local resolution of $\spline$ in the different coordinate directions. In the other cases the estimates are \emph{isotropic}, i.e., the directional information is discarded.

We also consider a sequence of spline spaces $\{\spline_d\}$ of increasing degree $\vd=(d, \dots, d)$ and corresponding approximation operators $\{\aop_d\}$.
Under suitable assumptions, if $f:\Omega\to\R$ admits an analytic extension on $\Delta\subset\C^n$, the approximation error decreases exponentially as $d$ increases. There are $\tau, C_{\tau}, \tau_{\#},C_{\tau_{\#}}\in\R$ that depend on $\Delta$, ${\vsigma}$ and $n$, but independent of the degree $d$, such that
\begin{equation}\label{eq:INTRO-exponential}
 \begin{aligned}
\norm{\partial^{\vsigma} (f-\aop_df)}_\infty \le& C_{\tau} \tau^{d+1-\abs{\vsigma}}\norm{f}_{\infty, \Delta}
\\\le& C_{\tau_{\#}} \tau_{\#}^{(\dim\spline_d)^{\frac1n}}\norm{f}_{\infty, \Delta}.
\end{aligned}
\end{equation}
Exponential convergence is of interest for the numerical solutions of PDEs and it is known to hold for Finite Element discretizations and univariate splines even in the presence of singularities \cite{MR3379755}\cite{MR3351174}. The novelty here is that exponential convergence is proved using local approximation operators and without assumptions on the smoothness of the functions in $\spline_d$.

Following the application of splines to numerical methods for PDEs, mostly in the framework of IsoGeometric Analysis (IGA) \cite{MR2152382}, there has been a renewed interest in extension of tensor product splines (TPS) that are suitable to adaptive methods.
Many of the available constructions fit our abstract framework. We provide an approximation operator $\aop$ satisfying \ref{H:polynomial},\dots,\ref{H:isotropicA} for TPS, Analysis Suitable T splines (AST) \cite{MR3084741}\cite{MR3003061}, truncated hierarchical splines (THB) \cite{MR2925951} and Locally Refined splines (LR) \cite{MR3019748}.

TPS are included for the sake of comparison and similar results are available in the literature \cite{local-spline-approx}\cite{ANU:9260759}\cite{MR2877950}.
The results for AST splines and for THB splines require weaker assumptions on the mesh compared to \cite{MR3627466}\cite{MR3439218}\cite{MR3003061}; moreover, both global estimates and estimates without mixed derivatives are included.
To the best of our knowledge the operator in Section~\ref{sec:lr-splines} is the first quasi-interpolant that is proposed for LR splines.

\subsection{Summary}

The notation and the setting of this paper are described in Section~\ref{sec:notation}.
Section~\ref{sec:framework} contains the main approximation results under the assumptions \ref{H:polynomial},\dots,\ref{H:isotropicA}.
The index set $K$ of partial derivatives is determined by \ref{H:approx} that describes polynomial approximation.
In Section~\ref{sec:TA} we prove that different variants of averaged Taylor expansion operators satisfy \ref{H:approx}.
Section~\ref{sec:estimates} specialises the abstract theory to bounds in terms of Sobolev and \emph{reduced} seminorms.
Section~\ref{sec:EXP} studies the approximation of analytic functions and contains the proof of exponential convergence.
Section~\ref{sec:b-splines} recalls the B-spline theory necessary for this paper and provides the building blocks for the approximation operators $\aop$ described in Section~\ref{sec:SP}.
Sections~\ref{sec:SP} contains an approximation operator for TPS, AST, THB and LR. For each case we provide a definition of $\aop$ that satisfies the required abstract assumptions.
Section~\ref{sec:remarks} contains a few closing remarks.

\section{Preliminaries}
\label{sec:notation}

\subsection{Notation}
Sets are usually denoted by capital letters, except when they are subsets of a topological space.
The cardinality of a set $A$ is denoted by $\cardinality{A}$.
The interior of a subsets $\omega$ of a topological space, i.e.\ the biggest open set contained in $\omega$, is denoted by $\interior{\omega}$ and the closure, i.e.\ the smallest closed set containing $\omega$, is denoted by $\closure\omega$.

$\Z$ is the set of integers and $\N:=\{0, 1, \dots\}$. The floor, or integer part, of a real number $a\in\R$ is denoted by $\floor{a}:=\max\{z\in\Z: z\le a\}$
. We also use the positive and negative parts: $a_+:=\max\{0, a\}$ and $a_-:=\min\{0,a\}$ so that $a= a_++a_-$.

Multi-indices in $\N^n$ and vectors in $\R^n$ are highlighted by boldface. For convenience we use $\vec0:=(0, \dots, 0)$ and $\vec1:=(1, \dots, 1)$ in $\N^n$ and $\R^n$.
Given a multi-index ${\valpha}\in\N^n$ and a sufficiently smooth function $f:\Omega\subseteq\R^n\to\R$
$$\partial^{\valpha} f:=\frac{\partial^{\alpha_1+\dots+\alpha_n}}{\partial x_1^{\alpha_1}\cdots\partial x_n^{\alpha_n}}f. $$
The factorial of a multi-index $\valpha \in\N^n$ is $\valpha!:=\prod_{i=1}^n (\alpha_i!)$. Consequently for ${\valpha}, \, {\vbeta}\in\N^n$ we have
$$
\binom {\valpha}{\vbeta}:=\frac{\valpha!}{(\valpha-\vbeta)!\vbeta!} =\prod_{i=1}^n\binom {\alpha_i}{\beta_i}.
$$
Many scalar operations are extended to vectors in $\R^n$.
The relations $<, >, \ge, \le$ hold on a pair of vectors $\va=(a_1, \dots, a_n), \vb=(b_1, \dots, b_n)\in\R^n$ if and only if they hold for each pair of components, e.g.\ $$
\va\le\vb\iff a_i\le b_i, \, \forall i=1, \dots, n.
$$
Similarly $\maxv\{\va, \vb\}$, $\minv\{\va, \vb\}$, and $\va_\pm$ act component wise: e.g.,
\begin{align*}
\va_+&:=((a_1)_+, \dots, (a_n)_+), &&&\maxv\{\va, \vb\}&:=(\max\{a_1, b_1\}, \dots, \max\{a_n, b_n\}).
\end{align*}
For a vector $\va\in\R^n$ we define
\begin{align*}
&\abs{\va}:=\sum_{i=1}^n\abs{a_i}, &&
\norm{\va} :=\big(\sum_{i=1}^n a_i^2\big)^{\frac12}, \\
&\vmax {\va}:=\max\{a_1, \dots, a_n\}, &&\vmin\va :=\min\{a_1, \dots, a_n\},
\end{align*}
and for $\va, \vb\in\R^n$ we define the power $\va^{\vb}$ as
\begin{align*}
\va^{\vb} &:=\prod_{i=1}^n a_i^{b_i}.
\end{align*}
Note that we have the following relations for $\va,\vb,\vc\in\R^n$
\begin{equation}\label{eq:power-identity}
	\frac{\va^{\vc}}{\vb^{\vc}}=\Big(\frac{a_1}{b_1},\dots,\frac{a_n}{b_n}\Big)^{\vc}, \qquad \va\ge \vec0\thus \va^{\vb}\le (\vmax \va)^{\abs{\vb_+}}(\vmin\va)^{-\abs{\vb_-}}.
\end{equation}

As usual $\smooth(U)$ denotes the space of continuous functions $U\to\R$ and for ${\alpha}\in\N$, $\smooth^{\alpha}(U)$ is the space of continuous functions whose derivatives up to order $\alpha$ are in $\smooth(U)$.
The definition is extended to ${\valpha}\in\N^n$ with the following meaning
$$
\smooth^{{\valpha}}(U):=
\{f:U\to\R\, :\forall {\vbeta}\in\N^n\text{ with } {\vbeta}\le {\valpha}, \partial^{\vbeta} f\in\smooth(U)\}.
$$

Polynomials of maximal degree $d$ and polynomials of multidegree $\vd=(d_1, \dots, d_n)$ are respectively denoted by
\begin{align*}
\poly_d&:=\SPAN\{\vx^{\valpha}:\, {\valpha}\in\N^n, \, \abs{\valpha}\le d\}, \qquad
\poly_{\vd}&:=\SPAN\{\vx^{\valpha}:\, {\valpha}\in\N^n, \, {\valpha}\le\vd\}.
\end{align*}
In general the polynomial space $\poly_A$ associated with $A\subseteq\N^n$ is
$$\poly_A:=\SPAN\{\vx^{\valpha}:\, {\valpha}\in A\}.$$

The standard Lebesgue measure on $\R^n$ is denoted by $\mu$. As customary $L^p(U)$, $1\le p\le\infty$, is the Banach space of the equivalence classes of measurable functions $U\to\R$ that agree almost everywhere (a.e.) and $L^p(U;\R^n)$ is the corresponding space of vector valued functions. The norm on $L^p(U)$ is the usual
$$\norm{f}_{p, U}:=\begin{cases}(\int_U\abs{f(\vx)}^p\, d\vx )^{\frac1p}& p\neq\infty\\\esssup\{\abs{f(\vx)}, \, \vx\in U\}&p=\infty.\end{cases}$$
The dual of $L^p(U)$ is the space of linear continuous functionals $\lambda:L^p(U)\to\R$ and it is denoted by $L^p(U)^*$. It is a Banach space with norm
$$
\norm{\lambda}_{*p}:=\sup\{\lambda(f):\, f\in L^p(U):\, \norm{f}_p=1\}.
$$
If $1<p<\infty$ then $L^p(U)^*$ is isometrically isomorphic to $L^{p'}(U)$ where $p'=(1-1/p)^{-1}$ and the isomorphism maps $\lambda$ to $w$ if for all $f\in L^p(U)$
$$
\lambda (f)=\int_U f(\vx) w (\vx)\, d\vx.
$$
The support of a function $f\in L^p(U)$ is the closed set
$$\support f := U\setminus\bigcup_{W\in Z} W, \qquad Z=\{\text{open } W\subseteq U: f|_W=0\ \text{a.e.}\}.$$
This definition agrees with the standard definition $\support f=\closure{\{\vx\in U:\, \tilde f(\vx)\ne 0\}}$ if $\tilde f$ is a continuous representative of $f$.
For $\lambda\in L^p(U)^*$ we use $\support\lambda$ to denote the distributional support, i.e.\
$$\support\lambda:=U\setminus\bigcup_{W\in Z} W, \ Z=\{\text{open }W\subseteq U:\forall f\text{ with }\support f\subset W, \, \lambda(f)=0\}.$$
For $1<p<\infty$ the supports of $\lambda$ as an element of $L^p(U)^*$ and as an element of $L^{p'}(U)$ coincide.


\subsection{Setting}

Spaces of piecewise polynomials over box meshes are the main subject of this paper.
Here a box in $\R^n$ is axis aligned, i.e.\ it is either empty or a Cartesian product of non-empty closed intervals $\eta=[\va,\vb]:=[a_1, b_1]\times\dots\times[a_n, b_n]$.
A box mesh $\mesh$ is a collection of boxes having non-empty pairwise-disjoint interiors
\begin{align*}
\omega\in\mesh &\thus\interior\omega\ne\emptyset, &&&
\omega, \eta\in\mesh,\,\omega\ne\eta &\ \thus\ \interior{\omega}\cap\interior{\eta}=\emptyset.
\end{align*}
The domain $\Omega$ of the box mesh $\mesh$ is the union of the boxes it contains
$$
\Omega:=\bigcup_{\omega\in\mesh}\omega.
$$
Note that the domain of a box mesh is not necessarily connected.

A spline space $\spline$ of degree $\vd$ on a box mesh $\mesh$ is a subspace of
$$\{f:\Omega\to\R:\, \forall\omega\in\mesh, \, \exists g\in\poly_{\vd}:\, f=g\text{ a.e. in }\omega\}.$$
We always assume $\poly_{\vd}\subseteq\spline$.
For continuous functions the above could be simplified to $\{f:\Omega\to\R:\, \forall\omega\in\mesh, \, f|_\omega\in\poly_{\vd}\}$.
Note that the support of any function $f\in\spline$ is a union of boxes in $\mesh$.

Operators $\aop:L^p(\Omega)\to\spline$ can be constructed using a generating system $\Phi$ and a collection of linear functionals $\Lambda=\{\lambda_\phi, \, \phi\in\Phi\}\subset L^p(\Omega)^*$ by setting
\begin{equation}
\label{eq:operator-in-gen-form}
\aop f:=\sum_{\phi\in\Phi}\lambda_\phi(f)\phi.
\end{equation}
Note that $\aop$ is linear 
, i.e., $\aop( a f +b g)=a \aop f + b \aop f$.
For the purpose of this paper $\mesh$, $\Omega$, $\spline$, $\Phi$, $\aop$, $\Lambda$, $p$, $q$ and ${\vsigma}\le\vd$ appearing in \eqref{eq:abstract-form} and \eqref{eq:operator-in-gen-form} are considered given.
Because of this we will shorten $\norm{\cdot}_{p, \Omega}$ to $\norm{\cdot}_p$, i.e. the domain $\Omega$ will be subsumed in norms.

The support of each generator $\phi$ contains a subset $\contained_\phi$ of the elements in $\mesh$.
Vice versa each $\omega$ is contained in the supports of a subset $\actives_\omega$ of $\Phi$
$$
\contained_\phi:=\{\omega\in\mesh:\omega\subseteq\support\phi\},\qquad\qquad
\actives_\omega :=\{\phi\in\Phi:\omega\subseteq\support\phi\}.
$$
The definition of $\actives_\omega$ extends to $U\subseteq \Omega$ as $\actives_U:=\{\phi\in\Phi:\support\phi\cap\interior U\neq\emptyset\}$.
Usually $\support\lambda_\phi\subseteq\support\phi$, for the general case we introduce the \emph{extended support} $\esupp_{\phi}$ and the \emph{extended active set} $\actived_\omega$
\begin{align*}
\esupp_\phi &:=\bbox(\support\phi\cup\support\lambda_\phi)\cap\Omega,&&&
\actived_\omega &:=\{\phi\in\Phi:\omega\subseteq\esupp_\phi\},
\end{align*}
where $\bbox (U)$ is the  \emph{bounding box} of $U\subset\R^n$ , i.e., the smallest box containing $U$:
$$
\bbox (U):=\min\{\text{box }\eta: U\subseteq\eta\}.
$$
From the definition it follows that $\actived_\omega\supseteq\actives_\omega$ with equality if $\support\lambda_\phi\subseteq\support\phi=\esupp_\phi$.
An example of $\esupp_\phi$ is shown in Fig.~\ref{fig:boxes}.
\begin{figure}\begin{center}
\begin{tikzpicture}[xscale=.7, yscale=.6]
\draw (-3.5,-1.7) rectangle (5.5,4.5) (5.5,1.4) node[right] {$\Omega$};
\draw[fill=\FF]	 (-1.5, -1) rectangle (3, 3.3) node[right] {$\esupp_\phi$};
\draw[pattern=custom north west lines, hatchcolor=black, hatchspread=3\hatchspread] 	 (3, 2.3) --(-.5, 2.3)--(-.5, 1)--(-1.5, 1)--(-1.5, -1)--(3, -1)--(3,2.3) node[midway,right] {$\support\phi$};
\draw[fill=\RF]	 (0, 0) rectangle (1.5, 1) node[right] {$\omega$};
\draw[pattern=custom north east lines, hatchcolor=black, hatchspread=3\hatchspread]	 (-1.5, 1.7) rectangle (.3, 3.3) node[above] {$\support\lambda_\phi$};
\end{tikzpicture}\end{center}
\caption[An example of $\omega$, $\support\phi$, $\support\lambda_\phi$ and $\esupp_\phi$ with $\phi\in\actives_\omega$.]{An example of $\omega$ (\tikz\fill[\RF] (0, 0) rectangle (.3, .3);), $\support\phi$ (\tikz\fill[pattern=custom north west lines, hatchcolor=black, hatchspread=2\hatchspread] (0, 0) rectangle (.3, .3);), $\support\lambda_\phi$ (\tikz\fill[pattern=custom north east lines, hatchcolor=black, hatchspread=2\hatchspread] (0, 0) rectangle (.3, .3);) and $\esupp_\phi$ (\tikz\fill[\TF] (0, 0) rectangle (.3, .3);).
}\label{fig:boxes}
\end{figure}

Note that $\esupp_\phi$ is convex and a box if and only if $\bbox(\support\phi\cup\support\lambda_\phi)\subseteq\Omega$.
In this paper we assume that the $\esupp_\phi$ is a \emph{truncated} box, i.e.\ it has the form
\begin{equation}\label{eq:def-truncated-box}
\esupp_\phi=\closure{\eta\setminus \beta},
\end{equation}
where $\eta, \beta$ are two boxes in $\R^n$ sharing a common vertex and $\beta\subset\eta$.
The truncated boxes in $\R^n$ are classified by the face of $\eta$ that does not intersect $\esupp_\phi$ and has maximum dimension $m$, $m=-1$ when $\esupp_\phi=\eta$.
It can be shown that there are  $3^n-2n$ types of truncated boxes in $\R^n$.
In $\R^2$ a truncated box is either a rectangle or an L-shaped domain, see Fig.~\ref{fig:truncated-box-2D}.
In $\R^3$ it is either a box, a Fichera corner or a product of an L-shape and an interval, see Fig.~\ref{fig:truncated-box-3D}.

\begin{figure}
\centering
\begin{tikzpicture}[scale=1.33]
\draw[fill=\FF] (-1, -1) rectangle (1, 1);
\node at (0, -1.35) {$1$ type, ($m=-1$)};
\def\L{\draw[fill=\FF] (-1, -1)--(0, -1)--(0, 0)--(1, 0)--(1, 1)--(-1, 1)--cycle;}
\foreach\i[evaluate=\i as \r using \i*90, evaluate=\i as \s using \i*3+2] in {1}
	{\begin{scope}[xshift=\s cm, rotate=\r]\L\end{scope}}
\node at (5, -1.35) {$4$ types, ($m=0$)};
\fill[red] (6,1) circle (1.5pt) ;
\end{tikzpicture}
\caption{Truncated boxes in $\R^2$: the box and the L-shaped domains. The face of $\eta$ of dimension $m$ not intersecting $\esupp_\phi$ is highlighted in red. }\label{fig:truncated-box-2D}
\end{figure}
\begin{figure}
\centering
\begin{tikzpicture}[scale=1]
\draw[fill=\FF] (-1, -1, 1)--(1, -1, 1)--(1, 1, 1)--(-1, 1, 1)--cycle;
\draw[fill=\RF] (1, -1, 1)--(1, -1, -1)--(1, 1, -1)--(1, 1, 1)--cycle;
\draw[fill=\TF] (-1, 1, -1)--(1, 1, -1)--(1, 1, 1)--(-1, 1, 1)--cycle;
\node at (0, -1.35, 1) {$1$ type, ($m=-1$)};

\begin{scope}[xshift=4cm]
\draw[fill=\FF] (-1, -1, 1)--(1, -1, 1)--(1, 0, 1)--(0, 0, 1)--(0, 1, 1)--(-1, 1, 1)--cycle;
\draw[fill=\RF] (1, -1, -1)--(1, 1, -1)--(1, 1, 0)--(1, 0, 0)--(1, 0, 1)--(1, -1, 1)--cycle;
\draw[fill=\TF] (-1, 1, -1)--(1, 1, -1)--(1, 1, 0)--(0, 1, 0)--(0, 1, 1)--(-1, 1, 1)--cycle;

\draw[fill=\FF] (0, 0, 0)--(1, 0, 0)--(1, 1, 0)--(0, 1, 0)--cycle;
\draw[fill=\RF] (0, 0, 0)--(0, 1, 0)--(0, 1, 1)--(0, 0, 1)--cycle;
\draw[fill=\TF] (0, 0, 0)--(1, 0, 0)--(1, 0, 1)--(0, 0, 1)--cycle;
\node at (0, -1.35, 1) {$8$ types, ($m=0$)};
\fill[red] (1,1,1) circle (1.5pt) ;
\end{scope}
\begin{scope}[xshift=8cm]
\draw[fill=\FF] (-1, -1, 1)--(1, -1, 1)--(1, 0, 1)--(0, 0, 1)--(0, 1, 1)--(-1, 1, 1)--cycle;
\draw[fill=\RF] (1, -1, 1)--(1, -1, -1)--(1, 0, -1)--(1, 0, 1)--cycle;
\draw[fill=\RF] (0, 0, 1)--(0, 0, -1)--(0, 1, -1)--(0, 1, 1)--cycle;
\draw[fill=\TF] (-1, 1, -1)--(0, 1, -1)--(0, 1, 1)--(-1, 1, 1)--cycle;
\draw[fill=\TF] (0, 0, -1)--(1, 0, -1)--(1, 0, 1)--(0, 0, 1)--cycle;
\node at (0, -1.35, 1) {$12$ types, ($m=1$)};
\draw[thick,red] (1,1,1)  -- (1,1,-1) ;
\end{scope}
\end{tikzpicture}
\caption{Truncated boxes in $\R^3$: the box, the Fichera corner and the product of an L-shape with an interval. The face of $\eta$ of dimension $m$ not intersecting $\esupp_\phi$ is highlighted in red.}\label{fig:truncated-box-3D}
\end{figure}

 \emph{Diversification}, see \cite{MR3268225}, can sometimes provide a $\Phi$ such that all the $\esupp_\phi$'s are truncated boxes, starting from a $\Psi$ not having this property.
It consists in replacing each generator $\phi\in\Psi$ with the set of its restrictions to the connected components of $\support\phi$.
In Fig.~\ref{fig:esupp-phi-outside} there are three examples of a non-convex $\esupp_\phi$ in $\R^2$.
The left is a truncated box. The middle one is not a truncated box, but diversification of $\phi$ can provide two truncated boxes. The right one is not a truncated box and diversification cannot be applied.

\begin{figure}\begin{center}
\def\OC{\FF}
\def\OM{\draw[fill=\OC] (0, 0)--(5, 0)--(5, 4.5)--(3.5, 4.5)--(3.5, 1)--(1.5, 1)--(1.5, 5)--(0, 5)--cycle;}

\begin{tikzpicture}[scale=.6,yscale=.8]
\tikzset{SC/.style={fill=\RF}}
\tikzset{BC/.style={pattern=custom north east lines, hatchcolor=black, hatchspread=3\hatchspread}}

\begin{scope}
\OM
\draw[SC] (2, .5)--(4, .5)--(4, 2)--(3.5, 2)--(3.5, 1)--(2, 1)--cycle node[right] at (4, 1.25) {$\esupp_\phi$};
\draw[BC] (2, .5) rectangle (4, 2);
\end{scope}

\begin{scope}[xshift=7cm]
\OM
\draw[SC] (1, 2) rectangle (1.5, 4)
	 (3.5, 2) rectangle (4, 4) node[right] at (4, 3) {$\esupp_\phi$};
\draw[BC] (1, 2) rectangle (4, 4) ;
\end{scope}

\begin{scope}[xshift=14cm]
\OM
\draw[SC] (1, .5)--(4, .5)--(4, 2)--(3.5, 2)--(3.5, 1)--(1.5, 1)--(1.5, 2)--(1, 2)--cycle node[right] at (4, 1.25) {$\esupp_\phi$};
\draw[BC] (1, .5) rectangle (4, 2);
\end{scope}
\end{tikzpicture}\end{center}
\caption[]{Three examples of a non-convex $\esupp_\phi$(\tikz\fill[\RF] (0, 0) rectangle (.3, .3);), where $\bbox(\esupp_\phi)$(\tikz\fill[pattern=custom north east lines, hatchcolor=black, hatchspread=2\hatchspread] (0, 0)rectangle (.3, .3);) is not contained in the U shaped region $\Omega$(\tikz\fill[\FF] (0, 0) rectangle (.3, .3);).}\label{fig:esupp-phi-outside}
\end{figure}

Given a box $\eta=[\va,\vb]=[a_1, b_1]\times\dots\times[a_n, b_n]$ we define the \emph{size} of $\eta$ as the vector
$$\he:=(\hei 1, \dots, \hei n):=(b_1-a_1, \dots, b_n-a_n)\in\R^n.$$
The definition is extended to $U\subset\R^n$ defining $\vh_U=\vh_{\bbox (U)}$.
Recalling the power notation for vectors it follows that for all $U\subset \R^n$ we have $$\mu(U)\le\vh_U^{\vec1}=\prod_{i=1}^n  h_{U,i}=\measure{\bbox(U)}$$ and equality holds if $\closure{U}$ is a box.

The size of many sets plays a fundamental role: the most recurring are
\begin{align*}
&\ho\text{ for the size of }\omega\in\mesh, \\
&\ha\text{ for the size of }\esupp_\phi, \, \phi\in\Phi, \\
&\hp\text{ as a shortening for }\vh_{\support\phi}\text{: the size of }\bbox(\support\phi).
\end{align*}
We define $\hm$ and $\hof$ as piecewise constant functions in $L^\infty(\Omega, \R^n)$ by
\begin{align}
\label{eq:def-hl}
&\hm|_\omega:=(\hmi 1, \dots, \hmi n) |_\omega:=\maxv_{\phi\in\actived_\omega}\{\ha\}, \\
\label{eq:def-ho}
&\hof|_\omega:=(\hofi 1, \dots, \hofi n)|_\omega:=\ho.
\end{align}
The function $\hm$ gives a local measure of the resolution of $\spline$, while $\hof$ represents the \emph{local mesh size}.
The form of $\rho_{\vk}$ in \eqref{eq:abstract-form} classifies the estimates as
\begin{itemize}
\item \emph{anisotropic}, if $\rho_{\vk}=\hm^{\ve_{\vk}}$ with $\ve_{\vk}\in\R^n$;
\item \emph{isotropic}, if $\rho_{\vk}=(\vmax\hm)^{e_{\vk}}$ with $e_{\vk}\in\R$.
\end{itemize}
Anisotropic estimates differentiate between the size along the coordinate directions, while for isotropic estimates only the maximum, or equivalently the diameter, is considered. For isotropic estimates the equivalence between $\vmax\hm$ and the diameter of $\esupp_\phi$ is
$$
\vmax\hm |_\omega\le\max_{\phi\in\actived_\omega}\{\diam\esupp_\phi\}\le \sqrt n \vmax\hm |_\omega.
$$
When dealing with mixed $p, q$ norms we often need
\def\pq{\nu}
\def\vpq{\vec\nu}
\begin{equation}\label{eq:pq-diff}
\pq:=\frac1p-\frac1q, \qquad\vpq:=(\pq, \dots, \pq),
\end{equation}
and the following inequalities
\begin{equation}\label{eq:sums}\begin{aligned}
\big(\sum_{i=1}^m a_i\big)^e\le\sum_{i=1}^m a_i^e\le m^{1-e}\big(\sum_{i=1}^m a_i\big)^e, && 0\le e\le 1,\\
\sum_{i=1}^m a_i^e\le\big(\sum_{i=1}^m a_i\big)^e\le m^{e-1}\sum_{i=1}^m a_i^e, && 1\le e<\infty,
\end{aligned}
\end{equation}
where $a_1, \dots, a_m\ge 0$.
The left inequalities can be proved by rescaling the $a_i$ by $(\sum_{i=1}^m a_i)^{-1}$ and using that for $a\in[0, 1]$, $a^e\le a\iff e\ge 1$.
The right inequalities are corollaries of H\"older's inequallity: for $1< e <\infty$ and $a_1, \dots, a_m, b_1, \dots, b_m\ge 0$
\begin{equation}\label{eq:holder}
\sum_{i=1}^m a_i b_i\le\Big (\sum_{i=1}^m a_i^e\Big )^{\frac1e}\Big (\sum_{i=1}^m b_i^{\frac e{e-1}}\Big )^{\frac {e-1}e}.
\end{equation}

\section{Estimates on box meshes}
\label{sec:framework}

This section describes how error estimates of the form \eqref{eq:abstract-form} follow from the assumptions \ref{H:polynomial},\dots,\ref{H:isotropicA}.
In the first subsection local error estimates are derived from \ref{H:polynomial},\dots,\ref{H:approx}.
Global error bounds require additionally \ref{H:duals},\dots,\ref{H:isotropicA} and are stated in the second and third subsections.
The fourth subsection describes the dependence of the mesh assumption on the parameters of \ref{H:approx}.

It is important to highlight that error estimates can only be obtained for suitable choices of the Sobolev spaces involved and this will be discussed in Section~\ref{sec:TA} where \ref{H:approx} is shown for the averaged Taylor expansion operator.

\subsection{Local error}

Usually local error bounds are stated using the \emph{support extension}
$$
\begin{aligned}&\norm{ f-\aop f}_{p, \omega}\le C\inf_{g\in\poly}\norm{f-g}_{p, \extension\omega},&&& \extension\omega:=\bigcup_{\phi\in\actives_\omega}\support\lambda_\phi.
\end{aligned}
$$
The drawback of this approach is that the polynomial approximation properties depends on the shape of $\extension\omega$.
Common assumption for polynomial approximation are that the domain is star-shaped with respect to a subset of positive measure \cite{dupont_scott}\cite{duran}\cite{verfurth} or bounded by graphs of regular functions \cite{reif}.
In \cite{MR3627466} local estimates for the approximation error are obtained without shape constraints on $\extension\omega$ by using the convex hull of $\extension\omega$. Unfortunately this technique cannot be extended to global estimates because it is difficult to bound the number of overlapping hulls in terms of elementary properties of the mesh.

Here we avoid the support extension by using the form
\begin{equation}\label{eq:approx-form}
\norm{f-\aop f}_{p, \omega}\le C\inf_{g\in\poly} \sum_{\phi\in\actives_\omega}\norm{f-g}_{p, \esupp_\phi}.
\end{equation}
This reduces the complexity of the shape on which we use polynomial approximation and allow us to derive global estimates and to reduce the mesh regularity assumptions.

The local error bounds are based on the following assumptions
\begin{hlist}
\hitem{polynomial}{\poly} $\aop g = g $ for all $g\in\poly_{\vd}$, \emph{polynomial reproduction};
\hitem{lambda}{\lambda} $\norm{\lambda_\phi}_{*p}\norm{\phi}_{p}\le\Clambda$, for all $\phi\in\Phi$, \emph{functional continuity};
\hitem{phi}{\phi} $\partial^{\vsigma}\phi\in L^p(\Omega)$, for all $\phi\in\Phi$, and  $\norm{\partial^{{\vsigma}}\phi}_{p, \omega}\le\Cphi\hp^{-{\vsigma}}\frac{\measure{\omega}^{\frac1p}}{\measure{\support\phi}^{\frac1p}}\norm{\phi}_{p}$, \emph{generators' regularity};
\hitem{esupp}{{\esupp}} $\ha\le\Cesupp\hp $, and $\ha^{\vec1}\le\Cesupp^n\mu(\support\phi)$ for all $\phi\in\Phi$, \emph{locality of $\aop$};
\hitem{approx}{\Pi} for each $\omega\in\mesh$ there is an approximation operator $\paop_\omega: L^p(\omega)\to\poly_{\vd}$ such that for  ${\vbeta}\in\{\vec0, \vsigma\}$, $\eta\in\{\omega\}\cup\{\esupp_\phi:\phi\in\actives_\omega\}$ it holds
$$\bnorm{\partial^{\vbeta}(f-\paop_\omega f)}_{p, \eta}\le\Capprox\frac{\ho^{{\vgamma}}}{\he^{{\vgamma}}}\sum_{\vk\in K_{\vbeta}}\he^{\vk-{\vbeta}+\vpq}\bnorm{\partial^{\vk} f}_{q, \eta},\qquad \vpq={\vec1}/p-{\vec1}/q.$$
\end{hlist}
Note that $\Cesupp\ge 1$, and for simplicity we assume that also $\Clambda,\Cphi \ge 1$.

The abstract upper bound for polynomial approximation in \ref{H:approx} depends on the index sets $K_{\vec0},\,K_{\vsigma}\subset\N^n$ and the parameter $\vgamma\in\R^n$ that influences the required mesh assumptions.
In Section~\ref{sec:estimates} we describe operators $\paop_\omega$ leading to error bounds with Sobolev and reduced seminorms.

\begin{remark}
If $\support\lambda_\phi\subseteq\support\phi=\bbox(\esupp_\phi)$ then \ref{H:esupp} is satisfied with $\Cesupp=1$. This will be the case for the proposed operators for TPS, LR and AST splines.
The control on the support measure in \ref{H:esupp} is needed if $\support\phi$ or $\esupp_\phi$ is not a box. This happens for THB and non-convex domains.
\end{remark}

\begin{theorem}[Local error]
\label{thm:local} For \mbox{$1\le p, q\le\infty$} and assuming \ref{H:polynomial},\dots,\ref{H:approx}  we have
\begin{equation}\label{eq:local}
\begin{aligned}
&\bnorm{\partial^{\vsigma}(f-\aop f)}_{p, \omega}\le\Clambda\Cphi\Cesupp^{\abs{\vsigma}+\frac{n}{p}}\Capprox
\\&\quad
\bigg[\sum_{\vk\in K_{\vsigma}}\ho^{\vk-\vsigma+\vpq}\bnorm{\partial^{\vk} f}_{q, \omega}+\sum_{\phi\in\actives_\omega}
\frac{\ho^{{\vgamma}+\vec1/p}}{\ha^{{\vgamma}+\vec1/p}}\sum_{\vk\in K_{\vec0}}\ha^{\vk-\vsigma+\vpq}\bnorm{\partial^{\vk} f}_{q, \esupp_\phi}\bigg ].
\end{aligned}
\end{equation}

Moreover, if $\vsigma=\vec0$ and $p=\infty$ the same inequality holds under the reduced assumptions  \ref{H:polynomial}, \ref{H:lambda},\ref{H:phi} and \ref{H:approx}.
\end{theorem}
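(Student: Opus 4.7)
The plan is to apply polynomial reproduction to split the error. Since $\paop_\omega f\in\poly_\vd$ and $\aop g=g$ for all such $g$ by \ref{H:polynomial}, on $\omega$ we have $f-\aop f=(f-\paop_\omega f)-\aop(f-\paop_\omega f)$, hence
\begin{equation*}
\|\partial^\vsigma(f-\aop f)\|_{p,\omega}\le\|\partial^\vsigma(f-\paop_\omega f)\|_{p,\omega}+\|\partial^\vsigma\aop(f-\paop_\omega f)\|_{p,\omega}.
\end{equation*}
The first summand is the pure polynomial approximation error on $\omega$ itself: \ref{H:approx} applied with $\eta=\omega$ and $\vbeta=\vsigma$ makes the factor $\ho^\vgamma/\he^\vgamma$ collapse to $1$ and directly yields the first sum in \eqref{eq:local}.

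For the second summand, since $\support\phi$ is a union of mesh elements, only generators with $\omega\subseteq\support\phi$ (equivalently $\phi\in\actives_\omega$) contribute on $\omega$, so that
\begin{equation*}
\|\partial^\vsigma\aop(f-\paop_\omega f)\|_{p,\omega}\le\sum_{\phi\in\actives_\omega}|\lambda_\phi(f-\paop_\omega f)|\,\|\partial^\vsigma\phi\|_{p,\omega}.
\end{equation*}
Each summand factors. Since $\support\lambda_\phi\subseteq\esupp_\phi$, \ref{H:lambda} gives $|\lambda_\phi(g)|\le C_\lambda\|g\|_{p,\esupp_\phi}/\|\phi\|_p$; applied to $g=f-\paop_\omega f$, the resulting $L^p(\esupp_\phi)$ norm is then estimated by \ref{H:approx} with $\eta=\esupp_\phi$, $\vbeta=\vec0$, producing $C_\Pi(\ho^\vgamma/\ha^\vgamma)\sum_{\vk\in K_{\vec0}}\ha^{\vk+\vpq}\|\partial^\vk f\|_{q,\esupp_\phi}$. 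For the second factor, \ref{H:phi} yields $\|\partial^\vsigma\phi\|_{p,\omega}\le C_\phi\hp^{-\vsigma}(\mu(\omega)/\mu(\support\phi))^{1/p}\|\phi\|_p$, and the two $\|\phi\|_p$ terms cancel.

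It remains to convert the quantities attached to $\support\phi$ into ones attached to $\esupp_\phi$. From \ref{H:esupp}, together with $\vsigma\ge\vec0$, one gets $\hp^{-\vsigma}\le C_\esupp^{|\vsigma|}\ha^{-\vsigma}$; combining the second bound of \ref{H:esupp} with $\mu(\omega)\le\ho^{\vec1}$ gives $(\mu(\omega)/\mu(\support\phi))^{1/p}\le C_\esupp^{n/p}(\ho/\ha)^{\vec1/p}$. Multiplying and regrouping the multi-index factors (the $-\vsigma$ from \ref{H:phi} merges with the $\vk+\vpq$ from \ref{H:approx} into $\vk-\vsigma+\vpq$, while the $\vec1/p$ merges with the prefactor into $\ho^{\vgamma+\vec1/p}/\ha^{\vgamma+\vec1/p}$) produces the second sum of \eqref{eq:local} with constant $C_\lambda C_\phi C_\esupp^{|\vsigma|+n/p}C_\Pi$. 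In the degenerate case $\vsigma=\vec0$, $p=\infty$, both $\hp^{-\vsigma}$ and $(\mu(\omega)/\mu(\support\phi))^{1/p}$ are $1$ a priori and the $C_\esupp$ exponent vanishes, so \ref{H:esupp} is never invoked and the bound persists under the reduced hypotheses.

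The main obstacle is purely bookkeeping: one has to carry the multi-index exponents cleanly through the composition of \ref{H:lambda}, \ref{H:phi} and \ref{H:approx}, and to remember that the correct domain for the dual pairing is $\esupp_\phi$ rather than $\support\phi$, since $\support\lambda_\phi$ may protrude outside $\support\phi$.
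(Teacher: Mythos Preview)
Your proof is correct and follows essentially the same route as the paper: split via polynomial reproduction with $g=\paop_\omega f$, handle the first term by \ref{H:approx} on $\eta=\omega$, and for the second expand over $\actives_\omega$, combine \ref{H:lambda}, \ref{H:phi}, \ref{H:esupp} and \ref{H:approx} on $\eta=\esupp_\phi$. The only point you glossed over is that the first term carries only the constant $\Capprox$, not $\Clambda\Cphi\Cesupp^{|\vsigma|+n/p}\Capprox$; the paper closes this by invoking the standing convention $\Clambda,\Cphi,\Cesupp\ge1$, which you should mention explicitly.
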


\begin{proof}
The reproduction of polynomial property \ref{H:polynomial} and linearity of $\aop$ imply that for all $g\in \poly_{\vd}$
\begin{align*}
\bnorm{\partial^{\vsigma} (f-\aop f)}_{p, \omega}&=\bnorm{\partial^{\vsigma}(f-g)-\partial^{\vsigma}\aop (f-g)}_{p, \omega}\\
&=\bnorm{\partial^{\vsigma}(f-g)-\sum_{\phi\in\actives_\omega}\lambda_\phi(f-g)\partial^{\vsigma}\phi}_{p, \omega}\\
&\le\norm{\partial^{{\vsigma}}(f-g)}_{p, \omega}+\sum_{\phi\in\actives_\omega}\norm{\partial^{{\vsigma}}\phi}_{p, \omega}\abs{\lambda_\phi(f-g)}.
\end{align*}
By \ref{H:esupp} we obtain
\begin{equation}\label{eq:proof-local-0}
\hp^{-{\vsigma}}\le\Cesupp^{\abs{\vsigma}}\ha^{-{\vsigma}},\quad\measure{\omega}^{1/p}=\ho^{\vec1/p},\quad\measure{\support\phi}^{-1/p}\le\Cesupp^{n/p}\ha^{-\vec1/p}.
\end{equation}
Using this, \ref{H:phi}, \ref{H:lambda} and \ref{H:approx} with $g=\paop_\omega f$ we have
\begin{equation}\label{eq:proof-local-1}
\begin{aligned}
&\norm{\partial^{{\vsigma}}\phi}_{p, \omega}\abs{\lambda_\phi(f-g)}
\le\Cphi\ha^{-\vsigma}\frac{\measure{w}^{1/p}}{\measure{\support\phi}^{1/p}} \norm{\phi}_p \norm{\lambda_\phi}_{*p}\norm{f-g}_{p,\support\lambda_\phi}
\\&\qquad\le \Cphi\Cesupp^{\abs{\vsigma}}\ha^{-\vsigma}\ho^{\vec1/p}\Cesupp^{n/p}\ha^{-\vec1/p}\Clambda\Capprox \ho^{\vgamma}\ha^{-\vgamma}\sum_{\vk\in K_{\vec0}}\ha^{\vk+\vpq}\bnorm{\partial^{\vk}f}_{q,\esupp_\phi}.
\end{aligned}
\end{equation}
Summing over the $\phi\in\actives_\omega$ and simplifying we obtain the second term in \eqref{eq:local}.
Using \ref{H:approx} with $\eta=\omega$ and $\vbeta=\vsigma$ we have
\begin{equation}
\label{eq:proof-local-2}
\bnorm{\partial^{\vsigma}(f-\paop_\omega f)}_{p, \omega}\le\Capprox\sum_{\vk\in K_{\vsigma}}\ho^{\vk-\vsigma+\vpq}\bnorm{\partial^{\vk} f}_{q, \omega}
\end{equation}
and  \eqref{eq:local} follows since $\Clambda\Cphi\Cesupp^{\abs{\vsigma}+\frac{n}{p}}\ge 1$.
\qed\end{proof}

\subsection{Global error \texorpdfstring{$p<\infty$}{p<infty}}

The global error bound is obtained by summing the local bounds from Theorem~\ref{thm:local}.
The procedure uses the following additional assumptions
\begin{hlist}
\hitem{duals}{\actived} $\cardinality{\actived_\omega}\le\Cduals$, for all $\omega\in\mesh$, \emph{bound on $\esupp_\phi$ overlaps};
\hitem{mesh}{\pmesh} $\Gamma_\phi\le\Cmesh$
for all $\phi\in\Phi$, \emph{ mesh regularity}, where
$$
\Gamma_\phi:=\begin{cases}
\Big(\sum_{\omega\in\contained_\phi}\frac{\ho^{{\vgamma} p+\vec1}}{\ha^{{\vgamma} p+\vec1}}\Big )^{\frac1p} & p\ne\infty\\
\max_{\omega\in\contained_\phi}\Big\{\frac{\ho^{{\vgamma}}}{\ha^{{\vgamma}}}\Big\} & p=\infty,
\end{cases}
$$
and ${\vgamma}$ was introduced in \ref{H:approx};
\hitem{isotropicO}{\pshapeO} $\vmax\ho\le\CisotropicO\, \vmin\ho $ for all $\omega\in\mesh$, \emph{element shape regularity};
\hitem{isotropicA}{\pshapeA} $\vmax\ha\le\CisotropicA\, \vmin\ha $ for all $\phi\in\Phi$,  \emph{shape regularity of $\esupp_\phi$}.
\end{hlist}
Note that $\Cduals,\CisotropicO,\CisotropicA \ge 1$, and for simplicity we assume that also $\Cmesh \ge 1$.

\ref{H:mesh} is a mesh regularity property that mirrors the result of the computations.
It is implied by the usual assumptions for non-tensor-product spaces, but it allows for weaker mesh regularity if $\vgamma\ge -\vec1/p$.
In particular tiny and not shape regular elements are allowed as long as each extended support is shape regular and contains a bounded number of elements, see Subsection~\ref{sec:mesh-assumptions}.
Assumptions \ref{H:isotropicO} and \ref{H:isotropicA} will be used only if $\vsigma>0$ or if $p>q$.

\begin{remark}\label{rem:actives-extended}
As $\actived_\omega\supseteq\actives_\omega$, we have $\cardinality{\actives_\omega}\le\cardinality{\actived_\omega}\le\Cduals$.
Some estimates can be made slightly sharper by using an upper bound for $\cardinality{\actives_\omega}$. 
\end{remark}

\begin{theorem}[Global error]
\label{thm:global}For $1\le p<\infty$, $1\le q\le\infty$ and 
assuming \ref{H:polynomial},\dots,\ref{H:mesh} we have
\begin{equation}\label{eq:global}
\bnorm{\partial^{\vsigma}\!(f-\aop f)}_{p}\le \Ctot \measure{\Omega}^{\pq_+}\Big( \sum_{\vk\in K_{\vsigma}}\!\!\bnorm{\hof^{\vk-\vsigma+\vpq_-}\partial^{\vk} f}_{q}\!\! + \sum_{\vk\in K_{\vec0}}\!\!\bnorm{\rho_{\vk, {\vsigma}}\partial^{\vk} f}_{q}
\Big),
\end{equation}
where we recall that $\vpq:=\vec1/p-\vec1/q$, $\hof$ is defined in \eqref{eq:def-ho}, $\rho_{\vk, {\vsigma}} :\Omega\to\R$ is the piecewise constant function defined by $\rho_{\vk, {\vsigma}}|_\omega :=\maxv_{\phi\in\actived_\omega}\{\ha^{\vk-\vsigma+\vpq_-}\}$ and
\begin{equation}\label{eq:global-constant}
\begin{aligned}
\Ctot &:= 2^{1-\frac1p} \cardinality{(K_{\vsigma}\cup K_{\vec0})}^{1-1/p} \Clambda\Cphi  \Cesupp^{\abs{\vsigma}+\abs{\vpq_+}+\frac np} \Capprox \Cduals^{1-\pq_-}\Cmesh.
\end{aligned}
\end{equation}
\end{theorem}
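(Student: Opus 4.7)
The plan is to start from the local bound \eqref{eq:local} of Theorem~\ref{thm:local}, raise it to the $p$-th power, sum over $\omega\in\mesh$, and take the $p$-th root. The two outer summands on the right of \eqref{eq:local} are separated by $(a+b)^p\le 2^{p-1}(a^p+b^p)$, which produces the $2^{1-1/p}$ prefactor. The inner finite sums over $K_\vsigma$, $\actives_\omega$ and $K_{\vec0}$ are handled by $(\sum_{i=1}^m a_i)^p\le m^{p-1}\sum_{i=1}^m a_i^p$ from \eqref{eq:sums}, with $\cardinality{\actives_\omega}\le\Cduals$ (Remark~\ref{rem:actives-extended}) absorbing the factor $\cardinality{\actives_\omega}^{p-1}$.

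For the diagonal term $\sum_\omega\sum_{\vk\in K_\vsigma}\ho^{(\vk-\vsigma+\vpq)p}\norm{\partial^\vk f}_{q,\omega}^p$, I would use that $\ho^{\vpq p}=\measure{\omega}^{1-p/q}$ (since $\vpq=\pq\vec1$ and $\omega$ is a box). When $p\le q$, H\"older's inequality \eqref{eq:holder} with exponents $q/p$ and $q/(q-p)$ gives the bound $\measure{\Omega}^{1-p/q}\bnorm{\hof^{\vk-\vsigma}\partial^\vk f}_q^p$. When $p>q$, the embedding $(\sum a_i^p)^{1/p}\le(\sum a_i^q)^{1/q}$ applied to $a_\omega=\ho^{\vk-\vsigma+\vpq_-}\norm{\partial^\vk f}_{q,\omega}$ achieves the same, as $\measure{\Omega}^{p\pq_+}=1$ and $\vpq_-=\vpq$. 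Both cases yield $\measure{\Omega}^{p\pq_+}\bnorm{\hof^{\vk-\vsigma+\vpq_-}\partial^\vk f}_q^p$.

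For the off-diagonal term, I would swap the sums using $\{(\omega,\phi):\phi\in\actives_\omega\}=\{(\omega,\phi):\omega\in\contained_\phi\}$, so that the weight $\ho^{(\vgamma+\vec1/p)p}/\ha^{(\vgamma+\vec1/p)p}$ summed over $\omega\in\contained_\phi$ equals $\Gamma_\phi^p\le\Cmesh^p$ by \ref{H:mesh}. There remains $\sum_\phi\ha^{(\vk-\vsigma+\vpq)p}\norm{\partial^\vk f}_{q,\esupp_\phi}^p$, which must be converted into $\measure{\Omega}^{p\pq_+}\norm{\rho_{\vk,\vsigma}\partial^\vk f}_q^p$. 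For $p\le q$ I would split $\ha^{(\vk-\vsigma+\vpq)p}=\ha^{(\vk-\vsigma)p}\ha^{\vpq p}$ and apply H\"older with exponents $q/p,\,q/(q-p)$; the critical computation is $\vpq\cdot pq/(q-p)=\vec1$, so the first factor becomes $(\sum_\phi\ha^{\vec1})^{p\pq_+}$, which by \ref{H:esupp} and \ref{H:duals} is bounded by $(\Cesupp^n\Cduals\measure{\Omega})^{p\pq_+}$. For $p>q$, the $\ell^p\subset\ell^q$ embedding acts directly on $\ha^{\vk-\vsigma+\vpq_-}\norm{\partial^\vk f}_{q,\esupp_\phi}$. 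In both cases I then decompose $\norm{\partial^\vk f}_{q,\esupp_\phi}^q=\sum_{\omega:\phi\in\actived_\omega}\norm{\partial^\vk f}_{q,\omega}^q$, swap once more, and use $\sum_{\phi\in\actived_\omega}\ha^{(\vk-\vsigma+\vpq_-)q}\le\Cduals\,\rho_{\vk,\vsigma}^q|_\omega$ directly from the definition of $\rho_{\vk,\vsigma}$.

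Taking the $p$-th root (using $(\sum a_i^p)^{1/p}\le\sum a_i$ to separate the two summand types into the sum format of \eqref{eq:global}) collects all constants. The main obstacle is the bookkeeping across the two regimes $p\le q$ and $p>q$: the total $\Cduals$ exponent assembles as $(p-1)+p\pq_+ +p/q$, which after the $p$-th root equals $1-\pq_-$ in both cases; the extra $\Cesupp^{n p\pq_+}$ becomes $\Cesupp^{\abs{\vpq_+}}$, consistent with the claimed $\Cesupp^{\abs{\vsigma}+\abs{\vpq_+}+n/p}$; and the factor $\cardinality{K_\vsigma}^{1-1/p}+\cardinality{K_{\vec0}}^{1-1/p}$ is absorbed into $\cardinality{(K_\vsigma\cup K_{\vec0})}^{1-1/p}$, yielding exactly \eqref{eq:global-constant}.
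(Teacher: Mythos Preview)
Your proposal is correct and follows essentially the same route as the paper's proof. The only organizational difference is that the paper applies \eqref{eq:sums} once with $m=\cardinality{K_{\vsigma}}+\cardinality{\actives_\omega}\cardinality{K_{\vec0}}\le 2\cardinality{(K_{\vsigma}\cup K_{\vec0})}\Cduals$ to the entire bracket in \eqref{eq:local}, whereas you split first into two terms via $(a+b)^p\le 2^{p-1}(a^p+b^p)$ and then handle each inner sum separately; the resulting constants coincide (using $\Cduals,\Cmesh\ge 1$ to pull common factors out of both terms), and the H\"older / $\ell^p\hookrightarrow\ell^q$ dichotomy for the cases $p\le q$ and $p>q$ matches the paper's \eqref{eq:bracket-1-plq}--\eqref{eq:bracket-2-pgq} exactly.
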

Before giving the proof we state two Corollaries.

\begin{corollary}\label{thm:global-ani}
For $1\le p\le q\le\infty$, $p\ne\infty$ and assuming \ref{H:polynomial},\dots,\ref{H:mesh} we have
\begin{equation}\label{eq:global-ani}
\norm{f-\aop f}_{p}\le \Ctot\,\measure{\Omega}^{\pq_+} \sum_{\vk\in K_{\vec0}}\bnorm{\hm^{\vk}\partial^{\vk} f}_{q}
\end{equation}
where $\Ctot:=4\cardinality{K_{\vec0}}^{1-\frac1p} \Clambda\Cphi\Cesupp^{n(\frac2p-\frac1q)}\Capprox\Cduals\Cmesh$.
\end{corollary}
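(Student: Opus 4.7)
The plan is to deduce the corollary directly from Theorem~\ref{thm:global} by specialising to $\vsigma=\vec 0$ and exploiting the sign of $\vpq$. I would first invoke \eqref{eq:global} with $\vsigma=\vec 0$; then $K_{\vsigma}=K_{\vec 0}$, so the two sums on the right hand side share the same index set and can be merged once I show their weights are both controlled by $\hm^{\vk}$. The hypothesis $p\le q$ (so $\nu\ge 0$) gives $\vpq_-=\vec 0$ and $\vpq_+=\vpq$, which reduces $\hof^{\vk-\vsigma+\vpq_-}$ to $\hof^{\vk}$ and $\rho_{\vk,\vec 0}|_\omega$ to $\maxv_{\phi\in\actived_\omega}\!\{\ha^{\vk}\}$.

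Next I would do the pointwise comparison of these two weights with $\hm^{\vk}$, which is the only step with content. For $\hof^{\vk}$, by definition of $\actived_\omega$ every $\phi\in\actived_\omega$ satisfies $\omega\subseteq\esupp_\phi$, so $\ho\le\ha$ componentwise, hence $\hof|_\omega\le\hm|_\omega$ componentwise and therefore $\hof^{\vk}\le\hm^{\vk}$ since $\vk\ge\vec 0$. For $\rho_{\vk,\vec 0}$, the scalar product/max inequality $\max_\phi \prod_{i} h_{\esupp_\phi,i}^{k_i}\le \prod_{i}\max_\phi h_{\esupp_\phi,i}^{k_i}$ (valid because all quantities are nonnegative and $k_i\ge 0$) gives $\maxv_{\phi\in\actived_\omega}\!\{\ha^{\vk}\}\le\hm^{\vk}|_\omega$. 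Both weights are therefore dominated pointwise by $\hm^{\vk}$, so each of the two $L^q$ norms in \eqref{eq:global} is bounded by $\norm{\hm^{\vk}\partial^{\vk}f}_q$.

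Finally I would verify the constant. Collapsing the two identical sums picks up a factor $2$, which combined with the $2^{1-1/p}$ coming from \eqref{eq:global-constant} gives $2^{2-1/p}\le 4$. With $\vsigma=\vec 0$ and $p\le q$, the exponent on $\Cesupp$ becomes $\abs{\vsigma}+\abs{\vpq_+}+n/p = n(1/p-1/q)+n/p = n(2/p-1/q)$, and $\Cduals^{1-\nu_-}=\Cduals$, matching the $\Ctot$ in the statement exactly. There is no real obstacle here: the only substantive point is the pointwise weight comparison, and the rest is bookkeeping.
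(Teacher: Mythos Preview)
Your proposal is correct and follows essentially the same approach as the paper: specialise Theorem~\ref{thm:global} to $\vsigma=\vec0$, use $p\le q$ to get $\vpq_-=\vec0$, bound both weights $\hof^{\vk}$ and $\rho_{\vk,\vec0}$ pointwise by $\hm^{\vk}$ via the same componentwise and max-product inequalities you describe, and merge the two identical sums. Your explicit verification of the constant (the $2\cdot 2^{1-1/p}\le 4$ and the $\Cesupp$ exponent) is a useful addition that the paper leaves implicit.
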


\begin{proof}
Since $\hof|_\omega=\ho\le \maxv_{\phi\in\actived_\omega} \ha=\hm|_\omega$ and  $K_{\vec0}\ge \vec0$ we have
$\hof^{\vk}\le\hm^{\vk}.$
Moreover, $\vpq_-=\vec0$ because $p\le q$, so that
$$
\rho_{\vk, \vec0}|_\omega=\max_{\phi\in\actived_\omega}\prod_{i=1}^n\hai i^{ k_i} \le \prod_{i=1}^n \max_{\phi\in\actived_\omega}\hai i^{ k_i} =\hm^{\vk} |_\omega.
$$
Inserting  the above in Theorem~\ref{thm:global} gives the result.
\qed\end{proof}

\begin{corollary}
\label{thm:global-iso}
For $1\le p, q\le\infty$, $p\ne\infty$ and assuming \ref{H:polynomial},\dots,\ref{H:isotropicA}, $ K_{\vsigma}\ge\vsigma$ and $\abs{\vk}\ge \abs{\vsigma} + \abs{\vpq_-}$ for $\vk\in K_{\vec0}\cup K_{\vsigma}$, we have
\begin{equation}\label{eq:global-iso}\begin{aligned}
\bnorm{\partial^{\vsigma}(f-\aop f)}_{p} \le &\Ctot (\CisotropicO^{\abs{\vpq_-}}\! +\CisotropicA^ {\abs{\vsigma}+\abs{\vpq_-}}) \\&\measure{\Omega}^{\pq_+}\!\!\!\sum_{\vk\in K_{\vsigma}\cup K_{\vec0}}\bnorm{(\vmax\hm)^{\abs{\vk}-\abs{\vsigma}-\abs{\vpq_-}}\partial^{\vk} f}_{q},
\end{aligned}
\end{equation}
where $\Ctot$ is given in Theorem~\ref{thm:global}.
Moreover, if $p\le q$ then \ref{H:isotropicO} is not required. If $p\le q$ and $\vsigma=\vec0$ then \ref{H:isotropicO} and \ref{H:isotropicA} are not required.
\end{corollary}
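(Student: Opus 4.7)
The plan is to deduce \eqref{eq:global-iso} from the anisotropic bound \eqref{eq:global} of Theorem~\ref{thm:global} by converting the two weights $\hof^{\vk-\vsigma+\vpq_-}$ and $\rho_{\vk,\vsigma}$ into isotropic ones, paying multiplicative factors of $\CisotropicO$ and $\CisotropicA$ to absorb the negative components of the exponent vector via \ref{H:isotropicO} and \ref{H:isotropicA}.

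Fix $\omega\in\mesh$ and set $\vb:=\vk-\vsigma+\vpq_-$. Since $\sum_i(\vpq_-)_i=-\abs{\vpq_-}$, the sum of the components of $\vb$ is $\abs{\vk}-\abs{\vsigma}-\abs{\vpq_-}$, which coincides with $\abs{\vb_+}-\abs{\vb_-}$ and is nonnegative by hypothesis. Applying the second inequality in \eqref{eq:power-identity} to $\ho$ and using \ref{H:isotropicO} to replace $\vmin\ho$ by $\CisotropicO^{-1}\vmax\ho$ yields
\[
\ho^{\vb}\le(\vmax\ho)^{\abs{\vb_+}}(\vmin\ho)^{-\abs{\vb_-}}\le\CisotropicO^{\abs{\vb_-}}(\vmax\ho)^{\abs{\vk}-\abs{\vsigma}-\abs{\vpq_-}};
\]
the analogous argument with $\ha$ in place of $\ho$ and \ref{H:isotropicA} in place of \ref{H:isotropicO} gives $\ha^{\vb}\le\CisotropicA^{\abs{\vb_-}}(\vmax\ha)^{\abs{\vk}-\abs{\vsigma}-\abs{\vpq_-}}$ for each $\phi\in\actived_\omega$. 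Since $\omega\subseteq\esupp_\phi$ implies $\ho\le\ha\le\hm|_\omega$ componentwise and the exponent is nonnegative, both right-hand sides are bounded by the corresponding constant times $(\vmax\hm|_\omega)^{\abs{\vk}-\abs{\vsigma}-\abs{\vpq_-}}$; this last bound is preserved when passing through $\max_{\phi\in\actived_\omega}$ in the definition of $\rho_{\vk,\vsigma}$.

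It remains to bound $\abs{\vb_-}$ uniformly in $\vk$. For $\vk\in K_{\vsigma}$ the hypothesis $K_\vsigma\ge\vsigma$ gives $\vk-\vsigma\ge\vec0$, so the negative components of $\vb$ are inherited from $\vpq_-$ alone and $\abs{\vb_-}\le\abs{\vpq_-}$, producing the factor $\CisotropicO^{\abs{\vpq_-}}$ in front of the first sum of \eqref{eq:global}. For $\vk\in K_{\vec0}$ one only has $\vk\ge\vec0$, hence componentwise $\vb\ge-\vsigma+\vpq_-$, and since both $-\vsigma$ and $\vpq_-$ are nonpositive this gives $\abs{\vb_-}\le\abs{\vsigma}+\abs{\vpq_-}$, producing the factor $\CisotropicA^{\abs{\vsigma}+\abs{\vpq_-}}$ in front of the second sum. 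Bounding the two prefactors by their sum and merging the sums over $K_\vsigma\cup K_{\vec0}$ yields \eqref{eq:global-iso}. The \emph{moreover} claims follow by inspection: when $p\le q$ one has $\vpq_-=\vec0$, so $\abs{\vb_-}=0$ for $\vk\in K_\vsigma$ and \ref{H:isotropicO} is never invoked; if in addition $\vsigma=\vec0$ then also $\abs{\vb_-}=0$ for $\vk\in K_{\vec0}$ and \ref{H:isotropicA} becomes unnecessary.

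The main bookkeeping obstacle is the case distinction yielding the two uniform bounds on $\abs{\vb_-}$; after that the argument is a direct algebraic substitution into Theorem~\ref{thm:global}, and no additional analytic ingredient is needed.
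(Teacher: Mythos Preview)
Your proof is correct and follows essentially the same approach as the paper's: both set the exponent vector $\vb=\valpha=\vk-\vsigma+\vpq_-$, apply \eqref{eq:power-identity} together with \ref{H:isotropicO} (resp.\ \ref{H:isotropicA}) to bound $\ho^{\vb}$ (resp.\ $\ha^{\vb}$), then use $K_\vsigma\ge\vsigma$ to get $\abs{\vb_-}\le\abs{\vpq_-}$ on the first sum and $\vk\ge\vec0$ to get $\abs{\vb_-}\le\abs{\vsigma}+\abs{\vpq_-}$ on the second, before substituting into Theorem~\ref{thm:global}. The only cosmetic difference is that you explicitly record $\abs{\vb_+}-\abs{\vb_-}=\abs{\vk}-\abs{\vsigma}-\abs{\vpq_-}\ge0$ before passing from $\vmax\ho$ and $\vmax\ha$ to $\vmax\hm$, whereas the paper leaves this implicit.
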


\begin{proof}
With $\valpha=\vk-\vsigma+\vpq_-$ and using \eqref{eq:power-identity} and \ref{H:isotropicO}  we have
\begin{equation}\label{eq:global-iso-1}
\begin{aligned}
\hof^{\valpha}|_\omega&\le (\max \ho)^{\abs{\valpha_+}}\, (\min \ho)^{-\abs{\valpha_-}}\le \CisotropicO^{\abs{\valpha_-}}(\max \ho)^{\abs{\valpha_+}- \abs{\valpha_-}}.
\end{aligned}
\end{equation}
Since $\vk\ge\vsigma$ we have $\abs{\vpq_-}\ge\abs{\valpha_-}$ and using $\CisotropicO\ge1$ and $\vmax\ho\le\vmax\hm|_\omega$  we obtain \begin{equation}
\label{eq:ho-isotropic}\hof^{\vk-\vsigma+\vpq_-}\le\CisotropicO^{\abs{\vpq_-}} (\vmax\hm)^{\abs{\vk}-\abs{\vsigma}-\abs{\vpq_-}}.
\end{equation}
Note that for $p\le q$ and $\vk\in K_{\vsigma}$ then $\valpha\ge\vec0$ and \ref{H:isotropicO} is not required.
Similarly using \ref{H:isotropicA} and $\abs{\valpha_-}\le\abs{\vsigma}+\abs{ \vpq_-} $ we obtain
\begin{equation}
\label{eq:ha-isotropic}
\rho_{\vk, {\vsigma}}\le\CisotropicA^{\abs{\vsigma}+\abs{\vpq_-}} (\vmax\hm)^{\abs{\vk}-\abs{\vsigma}-\abs{\vpq_-}}.
\end{equation}
Moreover, for $p\le q$ and $\vsigma=0$ we have $K_{\vec0}=K_{\vsigma}\ge\vec0 $  and \ref{H:isotropicA} is not required.
Inserting \eqref{eq:ho-isotropic} and \eqref{eq:ha-isotropic} in Theorem~\ref{thm:global} gives the result.
\qed\end{proof}

\begin{proof}[Theorem~\ref{thm:global}]
\label{sec:global-proof}
From $\norm{\partial^{\vsigma}(f-\aop f)}_{p}= \Big( \sum_{\omega\in\mesh} \norm{\partial^{\vsigma}(f-\aop f)}_{p,\omega}^p\Big)^{1/p}$, \eqref{eq:local} and \eqref{eq:sums} with $e=p\ge 1$ and $m=\cardinality{K_{\vsigma}}+\cardinality{\actives_\omega}\cardinality{K_{\vec0}}\le 2\cardinality{(K_{\vsigma}\cup K_{\vec0})} \Cduals$ and again \eqref{eq:sums}  with $e=1/p\le 1$ we get
\begin{equation*}
\begin{aligned}
&\norm{\partial^{\vsigma}(f-\aop f)}_{p} \le 2^{1-\frac1p} \cardinality{(K_{\vsigma}\cup K_{\vec0})}^{1-\frac1p} \Clambda\Cphi\Cesupp^{\abs{\vsigma}+n/p}\Capprox \Cduals^{1-\frac1p}\\
&\qquad
\bigg[\sum_{\vk\in K_{\vsigma}} \Big( \sum_{\omega\in\mesh}\ho^{p(\vk-\vsigma+\vpq)}\bnorm{\partial^{\vk} f}_{q, \omega}^p \Big)^{\frac1p}\\
&\qquad\qquad +\sum_{\vk\in K_{\vec0}}\Big(\sum_{\omega\in\mesh}\sum_{\phi\in\actives_\omega}
\frac{\ho^{{\vgamma}p+\vec1}}{\ha^{{\vgamma}p+\vec1}}\ha^{p(\vk-\vsigma+\vpq)}\bnorm{\partial^{\vk} f}_{q, \esupp_\phi}^p \Big)^{\frac1p}\bigg ].
\end{aligned}
\end{equation*}
By changing the summation order we have
$$
\begin{aligned}
\sum_{\omega\in\mesh}\sum_{\phi\in\actives_\omega}
\frac{\ho^{{\vgamma}p+\vec1}}{\ha^{{\vgamma}p+\vec1}}\ha^{p(\vk-\vsigma+\vpq)}\bnorm{\partial^{\vk} f}_{q, \esupp_\phi}^p&=\sum_{\phi\in\Phi}\sum_{\omega\in\contained_\phi}
\frac{\ho^{{\vgamma}p+\vec1}}{\ha^{{\vgamma}p+\vec1}}\ha^{p(\vk-\vsigma+\vpq)}\bnorm{\partial^{\vk} f}_{q, \esupp_\phi}^p
\end{aligned}
$$
that using \ref{H:mesh} leads to
\begin{equation}
\label{eq:global-structure}
\begin{aligned}
&\norm{\partial^{\vsigma}(f-\aop f)}_{p} \le 2^{1-\frac1p} \cardinality{(K_{\vsigma}\cup K_{\vec0})}^{1-\frac1p} \Clambda\Cphi\Cesupp^{\abs{\vsigma}+\frac np} \Capprox\Cduals^{1-\frac1p} \Cmesh
\\&\quad \bigg[\sum_{\vk\in K_{\vsigma}}\Big(\sum_{\omega\in\mesh}\ho^{p(\vk-\vsigma+\vpq)}\bnorm{\partial^{\vk} f}_{q, \omega}^p\Big)^{\frac1p}+ \sum_{\vk\in K_{\vec0}}\Big(\sum_{\phi\in\Phi}\ha^{p(\vk-\vsigma+\vpq)}\bnorm{\partial^{\vk} f}_{q, \esupp_\phi}^p\Big)^{\frac1p}\bigg].
\end{aligned}
\end{equation}
For $p \le q$, H\"older inequality \eqref{eq:holder} with $a_i=\ho^{p\vpq}$ and $e=(p\pq)^{-1}\ge 1$ implies
\begin{equation}
\label{eq:bracket-1-plq}
\begin{aligned}
\sum_{\omega\in\mesh}\ho^{p(\vk-\vsigma+\vpq)}\bnorm{\partial^{\vk} f}_{q, \omega}^p &\le\Big (\sum_{\omega\in\mesh}\ho^{\vec1}\Big )^{p\pq}\Big (\sum_{\omega\in\mesh}\ho^{q(\vk-\vsigma)}\bnorm{\partial^{\vk} f}_{q, \omega}^q\Big )^{\frac pq}\\
&\le\measure{\Omega}^{p\pq}\bnorm{\hof^{\vk-\vsigma}\partial^{\vk} f}_{q}^{p}.
\end{aligned}
\end{equation}
Similarly we get 
\begin{equation}\label{eq:bracket-2-plq}
\begin{aligned}
\sum_{\phi\in\Phi}\ha^{p(\vk-\vsigma+\vpq)}\bnorm{\partial^{\vk} f}_{q, \esupp_\phi}^p &\le\Big (\sum_{\phi\in\Phi}\ha^{\vec1}\Big )^{p\pq}\Big (\sum_{\phi\in\Phi}\ha^{q(\vk-\vsigma)}\bnorm{\partial^{\vk} f}_{q, \esupp_\phi}^q\Big )^{\frac pq}
\\&\le\Cduals\Cesupp^{np\pq}\measure{\Omega}^{p\pq}\bnorm{\rho_{\vk, {\vsigma}}\partial^{\vk} f}_{q}^{p},
\end{aligned}
\end{equation}
where we used  \ref{H:esupp} and \ref{H:duals} to obtain
$$\sum_{\phi\in\Phi}\ha^{\vec1}\le\Cesupp^n\sum_{\phi\in\Phi}\measure{\support\phi}\le\Cesupp^n\Cduals\measure{\Omega},$$
and the characteristic functions $\ind_{\esupp_\phi}$ to obtain
\begin{align*}
\sum_{\phi\in\Phi}\ha^{q(\vk-\vsigma)}\bnorm{\partial^{\vk} f}_{q, \esupp_\phi}^q &=\int_\Omega\Big(\sum_{\phi\in\Phi}\ind_{\esupp_\phi}(\vx)\ha^{q(\vk-\vsigma)}\Big )\babs{\partial^{\vk} f (\vx)}^q\, d\vx\\
&\le\int_\Omega\Cduals\rho_{\vk, {\vsigma}}^q\babs{\partial^{\vk} f(\vx)}^q\, d\vx =\Cduals \bnorm{\rho_{\vk, {\vsigma}}\partial^{\vk} f}_{q}^{q}.
\end{align*}
Inserting \eqref{eq:bracket-1-plq} and \eqref{eq:bracket-2-plq} in \eqref{eq:global-structure} gives \eqref{eq:global} for $p\le q$.

If $p> q$ then using \eqref{eq:sums} with $e=p/q\ge1$  we have
\begin{equation}
\label{eq:bracket-1-pgq}
\begin{aligned}
\sum_{\omega\in\mesh}\ho^{p(\vk-\vsigma+\vpq)}\bnorm{\partial^{\vk} f}_{q, \omega}^p &\le
(\sum_{\omega\in\mesh}\ho^{q(\vk-\vsigma+\vpq)}\bnorm{\partial^{\vk} f}_{q, \omega}^q)^{\frac{p}{q}}\\
&=\bnorm{\hof^{\vk-\vsigma+\vpq}\partial^{\vk} f}_{q}^p.
\end{aligned}
\end{equation}
Similarly
\begin{equation}
\label{eq:bracket-2-pgq}
\begin{aligned}
&\sum_{\phi\in\actives_\omega}
\ha^{p(\vk-\vsigma+\vpq)}\bnorm{\partial^{\vk} f}_{q, \esupp_\phi}^p
\le\Big(\sum_{\phi\in\actives_\omega}\ha^{q(\vk-\vsigma+\vpq)}\bnorm{\partial^{\vk} f}_{q, \esupp_\phi}^q\Big)^{\frac{p}{q}}\\
&\qquad\qquad=\Big(\sum_{\phi\in\Phi}\int_\Omega\ind_{\esupp_\phi}(\vx)\ha^{q(\vk-\vsigma+\pq)}\babs{\partial^{\vk} f(\vx)}^q\, d\vx\Big)^{\frac{p}{q}}\\
&\qquad\qquad\le\Cduals^{\frac pq}\bnorm{\rho_{\vk, {\vsigma}}\partial^{\vk} f}_{q}^{p}.
\end{aligned}
\end{equation}
Inserting \eqref{eq:bracket-1-pgq} and \eqref{eq:bracket-2-pgq} in \eqref{eq:global-structure} gives
\eqref{eq:global} for $p\ge q$.
\qed\end{proof}

\subsection{Global error \texorpdfstring{$p=\infty$}{p=infty}}

For $p=\infty$, \ref{H:duals} can be replaced with a bound on $\cardinality{\actives_\omega}$, see Remark~\ref{rem:actives-extended}.
Moreover, we have $\vpq=\vpq_-=-\vec1/q$.

\begin{theorem}
\label{thm:global-inf}
Assuming \ref{H:polynomial},\dots,\ref{H:mesh}, we have
\begin{align}
\label{eq:global-inf-ani}&\norm{f-\aop f}_{\infty}\le\Ctot\max_{\phi\in\Phi}\Big\{\sum_{\vk\in K_{\vec0}}\bnorm{\ha^{\vk}\partial^{\vk} f}_{\infty, \esupp_\phi}\Big\},
\end{align}
where $\Ctot :=2\Clambda\Cphi\Capprox\Cesupp^{\abs{\vsigma}}\Cactives\Cmesh.$
Assuming in addition \ref{H:isotropicO}, \ref{H:isotropicA},  $ K_{\vsigma}\ge\vsigma$ and $\abs{\vk}\ge \abs{\vsigma} + \abs{\vpq_-}$ for $\vk\in K_{\vec0}\cup K_{\vsigma}$ we have with the same $\Ctot$
\begin{align}
\label{eq:global-inf-iso}
\begin{aligned}
	\norm{\partial^{\vsigma}(f-\aop f)}_{\infty}\le&\Ctot (\CisotropicO^{\abs{\vpq_-}}+\CisotropicA^ {\abs{\vsigma}+\abs{\vpq_-}}\big ) 
\\&\max_{\phi\in\Phi}\Big\{\sum_{\vk\in K_{\vsigma}\cup K_{\vec0}}\bnorm{(\vmax\ha)^{\abs{\vk}-\abs{\vsigma}-\abs{\vpq_-}}\partial^{\vk} f}_{q, \esupp_\phi}\Big\}.\end{aligned}\end{align}
Moreover, if $q=\infty$ then \eqref{eq:global-inf-iso} does not require \ref{H:isotropicO}.
\end{theorem}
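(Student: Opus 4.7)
The plan is to derive both bounds directly from the per-element estimate of Theorem~\ref{thm:local} by replacing the $\ell^p$-summation used in Theorem~\ref{thm:global} with a straightforward maximum over $\omega\in\mesh$. Since $p=\infty$ gives $\norm{\partial^{\vsigma}(f-\aop f)}_{\infty}=\max_{\omega\in\mesh}\norm{\partial^{\vsigma}(f-\aop f)}_{\infty,\omega}$, no H\"older step is required, and the prefactors simplify via $\Cesupp^{n/p}=1$, $\vec1/p=\vec0$, and $\vpq=\vpq_-=-\vec1/q$. It therefore suffices to bound each of the two summands of \eqref{eq:local} uniformly in $\omega$ by the corresponding right-hand side of \eqref{eq:global-inf-ani} or \eqref{eq:global-inf-iso}.

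For \eqref{eq:global-inf-ani} I specialise to $\vsigma=\vec0$ and $q=\infty$, so $\vpq=\vec0$ and every exponent on $\ho$ or $\ha$ appearing in \eqref{eq:local} is nonnegative. I would bound the $\omega$-term by picking any $\phi^{*}\in\actives_\omega$---this set is nonempty because \ref{H:polynomial} forces $\aop$ to reproduce the constant function---and then use $\omega\subseteq\esupp_{\phi^{*}}$ together with the componentwise bounding-box inequality $\ho\le\vh_{\esupp_{\phi^{*}}}$ to obtain $\ho^{\vk}\bnorm{\partial^{\vk}f}_{\infty,\omega}\le\max_{\phi\in\Phi}\ha^{\vk}\bnorm{\partial^{\vk}f}_{\infty,\esupp_\phi}$. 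For the $\esupp_\phi$-term, \ref{H:mesh} yields $(\ho/\ha)^{\vgamma}\le\Cmesh$ while $\cardinality{\actives_\omega}\le\cardinality{\actived_\omega}\le\Cactives$ bounds the number of summands. Collecting the two pieces produces a factor of $1+\Cmesh\Cactives\le 2\Cmesh\Cactives$, which together with $\Clambda\Cphi\Capprox$ gives the stated $\Ctot$.

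For \eqref{eq:global-inf-iso} the same strategy applies once the anisotropic weights are converted into isotropic ones by means of the inequalities \eqref{eq:ho-isotropic} and \eqref{eq:ha-isotropic} already established in the proof of Corollary~\ref{thm:global-iso}; their applicability is secured by the hypotheses $K_{\vsigma}\ge\vsigma$ and $\abs{\vk}\ge\abs{\vsigma}+\abs{\vpq_-}$, which control the sign pattern of $\vk-\vsigma+\vpq_-$. After substitution every vector weight becomes a scalar factor of the form $(\vmax\hm)^{\abs{\vk}-\abs{\vsigma}-\abs{\vpq_-}}$ or $(\vmax\ha)^{\abs{\vk}-\abs{\vsigma}-\abs{\vpq_-}}$; I then push $\max_\omega$ through using $\vmax\hm|_\omega\le\max_{\phi\in\actived_\omega}\vmax\ha$, and the remaining manipulations proceed exactly as in the anisotropic case, producing the additive constant $\CisotropicO^{\abs{\vpq_-}}+\CisotropicA^{\abs{\vsigma}+\abs{\vpq_-}}$. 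The simplification $q=\infty\Rightarrow\vpq_-=\vec0$ trivialises the $\CisotropicO$-factor and removes the need for \ref{H:isotropicO}.

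The main obstacle---indeed essentially the only delicate point---is the index bookkeeping needed to verify that the isotropic conversions produce precisely the exponents of $\CisotropicO$ and $\CisotropicA$ claimed in \eqref{eq:global-inf-iso}, and that the scalar weights at $\omega$ can be dominated uniformly by their $\max_\phi$ counterparts. Beyond this, the proof is a direct transcription of Theorem~\ref{thm:global} and Corollary~\ref{thm:global-iso} with the $\ell^p$-sums replaced by maxima over $\omega$ and $\phi$.
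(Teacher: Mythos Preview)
Your proposal is correct and follows essentially the same route as the paper: start from the per-element bound \eqref{eq:local} with $p=\infty$, use \ref{H:mesh} to control $(\ho/\ha)^{\vgamma}$ and \ref{H:duals} to bound $\cardinality{\actives_\omega}$, then for \eqref{eq:global-inf-ani} absorb the $\omega$-sum into the $\esupp_\phi$-max via $\ho\le\ha$, and for \eqref{eq:global-inf-iso} apply the isotropic conversions \eqref{eq:ho-isotropic}--\eqref{eq:ha-isotropic}. The only cosmetic difference is that the paper first pulls $\Cduals\Cmesh$ in front of the whole bracket (obtaining an intermediate inequality \eqref{eq:global-inf-1}) and then picks up the factor $2$, whereas you keep the two bracket terms separate and bound $1+\Cactives\Cmesh\le 2\Cactives\Cmesh$; the outcome is identical.
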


\begin{proof}
By \ref{H:mesh}, \eqref{eq:local} and $ \cardinality{\actives_\omega}\le \Cduals$ we have
\begin{equation}\label{eq:global-inf-1}
\begin{aligned}
 &\norm{\partial^{\vsigma}(f-\aop f)}_{\infty, \omega}\le
\Clambda\Cphi\Cesupp^{\abs{\vsigma}}\Capprox\Cduals\Cmesh
\\&\qquad\qquad\bigg[\sum_{\vk\in K_{\vsigma}}\ho^{\vk-\vsigma+\vpq}\bnorm{\partial^{\vk} f}_{q, \omega}+\max_{\phi\in\actives_\omega}
\sum_{\vk\in K_{\vec0}}\ha^{\vk-\vsigma+\vpq}\bnorm{\partial^{\vk} f}_{q, \esupp_\phi}\bigg ].
\end{aligned}
\end{equation}
If $q=\infty$ and $\vsigma=\vec0$, then $\vpq=\vec0$ and $\ho^{\vk}\le \ha^{\vk}$, so that the first term in brackets is bounded by the second and we get \eqref{eq:global-inf-ani}.

If $q<\infty$ or $\vsigma\ne\vec0$ we use \eqref{eq:power-identity},  \ref{H:isotropicO} and \ref{H:isotropicA} to obtain
\begin{align*}
\ho^{\vk-\vsigma+\vpq}&\le\CisotropicO^{\abs{\vpq}}(\vmax\ha)^{\abs{\vk}-\abs{\vsigma}-\abs{\vpq}},&&& \vk\ge\vsigma, \\
\ha^{\vk-\vsigma+\vpq}&\le\CisotropicA^{\abs{\vsigma}+\abs{\vpq}}(\vmax\ha)^{\abs{\vk}-\abs{\vsigma}-\abs{\vpq}},
\end{align*}
and inserting these in  \eqref{eq:global-inf-1} gives \eqref{eq:global-inf-iso}.
If $q=\infty$ then \ref{H:isotropicO} is not required.
\qed\end{proof}

\subsection{Mesh assumptions}
\label{sec:mesh-assumptions}

The assumption \ref{H:mesh} can be replaced by one of the following
\begin{enumerate}
\hsubitem{mesh-num}{\#}{mesh} $\cardinality{  \contained_\phi}\le\Cnum$, \, for all $\phi\in\Phi$, \emph{$\#$elements in $\support\phi$};
\hsubitem{mesh-length}{\mathrm m}{mesh} $\ha\le\Clength\ho$, \, for all $\omega\in\mesh$ and $\phi\in\actives_\omega$, \emph{mesh quasi-uniformity}.
\end{enumerate}
We show in Proposition~\ref{thm:mesh-assumption} that \ref{H:mesh-num} implies \ref{H:mesh} for ${\vgamma}\ge-{\vec1}/p$ and that \ref{H:mesh-length} implies \ref{H:mesh} for all values of $\vgamma$.

\begin{proposition}
\label{thm:mesh-assumption}
The conditions listed in the following table imply \ref{H:mesh} with the corresponding constants $\Cmesh$.
\global\def\uglyCmesh{\Clength^{\abs{(\vgamma+\vec1/p)_-}+\frac np}}
\begin{center}
\begin{tabular}{rl|l}
\multicolumn{2}{c|}{conditions}&$\Cmesh$\\
\hline
 $\vec0\le {\vgamma}$\phantom{,} && $1$\\
 $-\vec1/p\le {\vgamma}$, & \ref{H:mesh-num} &$\Cnum^{-\vmin\vgamma}$\\
 $\vgamma\in\R^n$, &\ref{H:mesh-length} &$\uglyCmesh$
 \end{tabular}
\end{center}
\end{proposition}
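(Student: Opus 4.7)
The plan is to prove each row of the table by estimating $\Gamma_\phi$ directly, exploiting two elementary facts: (a) every $\omega\in\contained_\phi$ is contained in $\bbox(\esupp_\phi)$, so $\ho\le\ha$ componentwise; and (b) the elements of $\contained_\phi$ have pairwise disjoint interiors and their union is contained in $\support\phi\subseteq\bbox(\esupp_\phi)$, so $\sum_{\omega\in\contained_\phi}\mu(\omega)\le\ha^{\vec1}$.

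For $p<\infty$ I would set $\gamma_\ast:=\vmin\vgamma$ and use the decomposition $\vgamma p+\vec1=(\gamma_\ast p+1)\vec1+(\vgamma-\gamma_\ast\vec1)p$ to factor
\begin{equation*}
\frac{\ho^{\vgamma p+\vec1}}{\ha^{\vgamma p+\vec1}}=r_\omega^{\gamma_\ast p+1}\;\Big(\frac{\ho}{\ha}\Big)^{(\vgamma-\gamma_\ast\vec1)p},\qquad r_\omega:=\frac{\mu(\omega)}{\ha^{\vec1}}\in(0,1].
\end{equation*}
Fact (a) together with the non-negativity of $(\vgamma-\gamma_\ast\vec1)p$ bounds the second factor by $1$. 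For Row 1 ($\vgamma\ge\vec0$), $\gamma_\ast p+1\ge1$ gives $r_\omega^{\gamma_\ast p+1}\le r_\omega$, and (b) yields $\Gamma_\phi^p\le 1$. For Row 2 the substantive regime is $\gamma_\ast\in[-1/p,0]$, where $\alpha:=\gamma_\ast p+1\in[0,1]$ makes $x\mapsto x^\alpha$ concave; Jensen's inequality with $N:=\cardinality{\contained_\phi}\le\Cnum$ and (b) then produces $\sum_\omega r_\omega^\alpha\le N^{1-\alpha}\big(\sum_\omega r_\omega\big)^\alpha\le\Cnum^{-\gamma_\ast p}$, hence $\Gamma_\phi\le\Cnum^{-\vmin\vgamma}$. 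For Row 3 assumption \ref{H:mesh-length} supplies the complementary bound $\Clength^{-1}\le\hoi i/\hai i\le 1$, so $(\hoi i/\hai i)^{\gamma_i p+1}\le\Clength^{|(\gamma_i p+1)_-|}$ and the product over $i$ is $\le\Clength^{p|(\vgamma+\vec1/p)_-|}$; the same hypothesis gives $\mu(\omega)=\ho^{\vec1}\ge\Clength^{-n}\ha^{\vec1}$, so (b) also forces $\cardinality{\contained_\phi}\le\Clength^n$. Combining the two estimates and taking $p$-th roots produces the stated $\Cmesh$.

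The case $p=\infty$ is analogous but simpler, since $\Gamma_\phi$ is the maximum of a single power $(\ho/\ha)^{\vgamma}$ and no sum (hence no Jensen step) is needed; the three rows then follow by applying $\ho\le\ha$ and, where relevant, $\ho\ge\Clength^{-1}\ha$ to the exponent $\vgamma$. The main subtlety I expect is the range constraint for Jensen in Row 2: its validity requires $\alpha\in[0,1]$, equivalently $\vmin\vgamma\in[-1/p,0]$. When $\vmin\vgamma>0$ the Row 1 constant $1$ is already sharper than $\Cnum^{-\vmin\vgamma}$, so Rows 1--2 together cover the full regime $\vgamma\ge-\vec1/p$ without gap.
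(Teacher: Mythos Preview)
Your proof is correct and mirrors the paper's closely: both reduce Row~1 to the measure inequality $\sum_{\omega\in\contained_\phi}\mu(\omega)\le\ha^{\vec1}$, handle Row~2 via the power-mean/Jensen inequality (the paper applies \eqref{eq:sums} with $e=\vmin\vgamma\,p+1$ and the rescaled exponent $\valpha=(\vgamma p+\vec1)/e$, which is your decomposition in multiplicative rather than additive form), and treat Row~3 by combining the pointwise bound from \ref{H:mesh-length} with the derived cardinality estimate $\cardinality{\contained_\phi}\le\Clength^n$.

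One slip in your closing remark: for $\vmin\vgamma>0$ and $\Cnum\ge1$ one has $\Cnum^{-\vmin\vgamma}\le1$, so the constant $1$ is \emph{not} sharper; neither your Jensen step nor the paper's establishes the Row~2 constant when $\vmin\vgamma>0$, but in the paper the parameter $\vgamma$ (coming from \ref{H:approx}, cf.\ \eqref{eq:TA-approx-full}--\eqref{eq:TA-approx-reduced}) always satisfies $\vgamma\le\vec0$, so the case does not arise.
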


\begin{proof}
First we note that for all $\valpha\ge 1$ we have
$$
\sum_{\omega\in\contained_\phi}
\frac{\ho^{\valpha}}{\ha^{\valpha}}=\sum_{\omega\in\contained_\phi}\prod_{i=1}^n\Big(\frac{\hoi i}{\hai i}\Big)^{\alpha_i}\!\!\le \sum_{\omega\subseteq\esupp_\phi}\frac{\ho^{\vec1}}{\ha^{\vec1}} = \frac{\measure{\support\phi}}{\measure{\bbox(\esupp_\phi)}} \le 1.
$$	
This gives the first case as  $\valpha=\vgamma p +\vec1\ge\vec1$.
For $\vgamma p\ge-\vec1/p$, we use \eqref{eq:sums} with $ e=\vmin{\vgamma} p+1$, \ref{H:mesh-num}, and with $\valpha=(\vgamma p+\vec1)/(\vmin{\vgamma} p+1)\ge \vec1$ we obtain
\begin{align*}
\sum_{\omega\in\contained_\phi} \frac{\ho^{\vgamma p+1}}{\ha^{\vgamma p+1}}
&\le \cardinality{\contained_\phi}^{-\vmin{\vgamma} p} \Big(\sum_{\omega\in\contained_\phi} 
\frac{\ho^{\valpha}}{\ha^{\valpha}}\Big)^{\vmin{\vgamma} p+1}\le \Cnum^{-\vmin{\vgamma} p}.
\end{align*}
This shows the second case.
For the last case, if \ref{H:mesh-length} holds then
$$
\measure{\esupp_\phi}\ge \sum_{\omega\in\contained_\phi} \measure{\omega}\ge \cardinality{\contained_\phi}\min_{\omega\in\contained_\phi}\measure{\omega}\ge
\cardinality{\contained_\phi} \Clength^{-n}\measure{\esupp_\phi}.
$$
Therefore \ref{H:mesh-num} holds with $\Cnum=\Clength^n$. 
Using \ref{H:mesh-length} again and with $\valpha=\vgamma p+\vec1$ we have
$$
\sum_{\omega\in\contained_\phi}\frac{\ho^{\valpha}}{\ha^{\valpha}} \le\cardinality{\contained_\phi} \max_{\omega\in\contained_\phi}\frac{\ho^{\valpha}}{\ha^{\valpha}} \le \Clength^n \, \Clength^{\abs{\valpha_-}}.
$$
 \qed\end{proof}

An interesting observation is that \ref{H:mesh-num} and \ref{H:mesh-length} influence the relation between \ref{H:isotropicO} and \ref{H:isotropicA}. Assuming \ref{H:mesh-num} we have that \ref{H:isotropicO} implies \ref{H:isotropicA}. Assuming \ref{H:mesh-length} the two shape regularity assumptions are equivalent. This is proved in Proposition~\ref{thm:shape-regularity}.

\begin{proposition}\label{thm:shape-regularity}
We have
\begin{align*}
\text{\ref{H:mesh-num}}\ &\thus\ (\text{ \ref{H:isotropicO} \thus\ \ref{H:isotropicA} }),\\
\text{\ref{H:mesh-length}}\ &\thus\ (\text{ \ref{H:isotropicO} \iff\ \ref{H:isotropicA} }) \text{ and }\Clength^{-1}\CisotropicA\le\CisotropicO\le\Clength\CisotropicA.
\end{align*}
\end{proposition}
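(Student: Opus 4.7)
The plan is to prove both implications by exploiting two elementary facts. For every $\phi\in\Phi$ and every $\omega\in\contained_\phi$ we have $\omega\subseteq\bbox(\esupp_\phi)$, which forces $\hoi i\le\hai i$ componentwise and gives the ``easy'' inequalities $\vmax\ho\le\vmax\ha$ and $\vmin\ho\le\vmin\ha$. Under \ref{H:mesh-length} we additionally have the reverse componentwise bound $\hai i\le\Clength\hoi i$, producing a two-sided sandwich that will drive the biconditional.

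For the first implication, assume \ref{H:mesh-num} and \ref{H:isotropicO}; the only nontrivial step is an upper bound on $\vmax\ha$. The structural input here is that $\esupp_\phi$ is a truncated box in the sense of \eqref{eq:def-truncated-box}: writing $\esupp_\phi=\closure{\eta\setminus\beta}$ with $\beta\subsetneq\eta$ sharing a vertex of $\eta$, removing $\beta$ does not shrink any coordinate projection, so the projection of $\esupp_\phi$ onto the $i$-th axis has length exactly $\hai i$ and coincides with the union of the projections of the elements $\omega\in\contained_\phi$. Subadditivity of $1$D Lebesgue measure combined with \ref{H:mesh-num} then yields
$$
\hai i\le\sum_{\omega\in\contained_\phi}\hoi i\le\Cnum\max_{\omega\in\contained_\phi}\hoi i.
$$
Maximizing over $i$ gives $\vmax\ha\le\Cnum\max_{\omega\in\contained_\phi}\vmax\ho$, and then \ref{H:isotropicO} followed by the easy bound $\vmin\ho\le\vmin\ha$ upgrades this to $\vmax\ha\le\Cnum\CisotropicO\max_{\omega\in\contained_\phi}\vmin\ho\le\Cnum\CisotropicO\vmin\ha$, which is \ref{H:isotropicA} with $\CisotropicA\le\Cnum\CisotropicO$.

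For the second implication, assume \ref{H:mesh-length}, so $\hoi i\le\hai i\le\Clength\hoi i$ componentwise for every $\phi$ and every $\omega\in\contained_\phi$. If \ref{H:isotropicO} holds, fixing $\phi$ and any $\omega\in\contained_\phi$ we chain $\vmax\ha\le\Clength\vmax\ho\le\Clength\CisotropicO\vmin\ho\le\Clength\CisotropicO\vmin\ha$, giving $\CisotropicA\le\Clength\CisotropicO$. Conversely, if \ref{H:isotropicA} holds, fixing $\omega$ and any $\phi\in\actives_\omega$ we chain $\vmax\ho\le\vmax\ha\le\CisotropicA\vmin\ha\le\CisotropicA\Clength\vmin\ho$, obtaining $\CisotropicO\le\Clength\CisotropicA$. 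Rearranging yields the claimed $\Clength^{-1}\CisotropicA\le\CisotropicO\le\Clength\CisotropicA$.

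The only delicate step is the projection identity for truncated boxes used in the first implication: without the hypothesis that $\beta$ shares a vertex with $\eta$, removing $\beta$ could collapse a coordinate projection and break the bound $\hai i\le\sum_{\omega\in\contained_\phi}\hoi i$. All remaining steps are direct applications of the hypotheses or of subadditivity of Lebesgue measure.
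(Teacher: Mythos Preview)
Your proof is correct and follows essentially the same approach as the paper. For the first implication both arguments hinge on a covering bound of the form $\vmax\ha\le\Cnum\max_{\omega}\vmax\ho$; you spell out the projection reasoning and the role of the truncated-box structure, whereas the paper compresses this into one line by picking ``the element $\omega\subseteq\esupp_\phi$ having the longest edge'' and invoking \ref{H:mesh-num} directly. For the biconditional under \ref{H:mesh-length} your chaining is in fact slightly cleaner: you exploit the componentwise sandwich $\hoi i\le\hai i\le\Clength\hoi i$ to pass directly between $\vmin\ho$ and $\vmin\ha$ (and between $\vmax\ho$ and $\vmax\ha$), while the paper introduces explicit minimizing indices $i,j$ and routes the inequalities through $\hai j\le\hai i$ and $\hoi i\le\hoi j$. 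The resulting constants agree in both directions.
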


\begin{proof}
Let $\omega\subseteq \esupp_\phi$ be the element having the longest edge $\hoi i$. Then by \ref{H:mesh-num} and \ref{H:isotropicO}
$$
\vmax\ha\le\Cnum\vmax\ho\le\Cnum\CisotropicO\vmin\ho\le\Cnum\CisotropicO\vmin\ha.
$$
Let $i,j\in\{1,\dots,n\}$ be such that  $\min\ho=\hoi i$ and $\min\ha=\hai j$. Then using \ref{H:mesh-length} and \ref{H:isotropicA} we have $$
\begin{aligned}
	&\vmax\ho\le\vmax\ha\le\Cisotropic\vmin\ha
	\\&\qquad = \Cisotropic \hai j\le \Cisotropic \hai i \le \Cisotropic\Clength \hoi i =\Cisotropic\Clength\vmin\ho
	\end{aligned}
	$$
 and \ref{H:isotropicO} follows.
  Similarly with the same $i,j$ and using \ref{H:mesh-length} and \ref{H:isotropicO} we have 
$$
\begin{aligned}
&\vmax\ha\le\Clength \vmax\ho\le\CisotropicO\Clength\vmin\ho
\\ &\qquad= \CisotropicO\Clength\hoi i \le \CisotropicO\Clength\hoi j \le  \CisotropicO\Clength \hai j = \CisotropicO\Clength\vmin\ha
\end{aligned}
$$
and \ref{H:isotropicA} follows.
\qed\end{proof}

Fig.~\ref{figure:isotropic-example} shows a family of LR spaces \cite{MR3146870}\cite{MR3019748} on which \ref{H:mesh-num} and \ref{H:isotropicO} hold uniformly, but $\CisotropicO$ is not bounded.
The element $\omega$ (\tikz\fill[\RF] (0, 0) rectangle (.3, .3);) has size $\ho=(1, \varepsilon)$ and $\support\phi=\esupp_\phi$ (\tikz\fill[pattern=custom north east lines, hatchcolor=black, hatchspread=2\hatchspread] (0, 0) rectangle (.3, .3);) has size $\ha=(3, 1)$.
It can be shown that \ref{H:mesh-num} holds with $\Cnum=13$, \ref{H:isotropicO} holds with $\CisotropicO=\varepsilon^{-1}$ and \ref{H:isotropicA} holds with $\CisotropicA=3$.
By letting $\varepsilon\to0$ we observe that no bound for $\CisotropicO$ can be a function of $\Cnum$ and $\CisotropicA$.

\begin{figure}\begin{center}
\begin{tikzpicture}
\fill[\FF](0, 2) rectangle (5, 5);
\fill[\RF] (3, 3.3) rectangle (4, 3.7) node[midway] (A){};
\fill[pattern=custom north east lines, hatchcolor=black, hatchspread=2\hatchspread] (1, 3) rectangle (4, 4) ;
\draw (0, 2) grid (5, 5);
\foreach\x in {3.3, 3.7}\draw (1, \x)--(4, \x);
\draw[decorate, decoration={brace, raise=+5pt, amplitude=.5ex}](1, 3.3)--(1, 3.7) node[midway, left=9pt] {$\varepsilon$};
\draw (A) node {$\omega$};
\draw (2.5, 4) node[above]{$\esupp_\phi$};
\end{tikzpicture}\end{center}
\caption{A family of LR spaces on which \ref{H:mesh-num} and \ref{H:isotropicO} hold uniformly, but $\CisotropicO$ is not bounded.}
\label{figure:isotropic-example}
\end{figure}

\section{Polynomial approximation}\label{sec:TA}

This section describes collections of operators $\Pi_\omega$ that satisfy \ref{H:approx}.
The construction is based on averaged Taylor expansion operators.
To an index set $A\subseteq\N^n$ and a weight function $\psi\in L^1(\Omega)$ such that $\int\psi=1$ we associate the operator $T_{A, \psi}:\smooth^\infty(\Omega)\to\poly_A$ defined by
\begin{equation}
\label{eq:TA-general}
T_{A, \psi} f (\vx) :=\sum_{{\valpha}\in A}\int_{\Omega}\psi(\vy)\frac{(\vx-\vy)^{\valpha}}{{\valpha}!}\partial^{\valpha} f(\vy)\, d\vy.
\end{equation}
It is required that $\poly_A$ is translation invariant
\begin{equation}
\label{eq:TA-a-translation}\forall \vy\in\R^n, \quad\poly_A :=\SPAN\{\vx^{\valpha}:{\valpha}\in A\}=\SPAN\{(\vx-\vy)^{\valpha}:{\valpha}\in A\}.
\end{equation}
Different choices are possible. 
The error bounds in terms of Sobolev and reduced seminorms use $A=\{\valpha\in\N^n:\abs{\valpha}\le d\}$ and $A=\{\valpha\in\N^n: {\valpha}\le \vd\}$, respectively. 

The operators $T_{A, \psi}$ are defined on $\smooth^\infty(\Omega)$ and are uniquely extended to a Sobolev space $W$, provided $\smooth^\infty(\Omega)$ is dense in $W$ and $T_{A, \psi}$ is continuous with respect to the norm of $W$. We also use the symbol $T_{A,\psi}$ for such extensions.

In the following we recall and prove the elementary properties of averaged Taylor expansions. For the approximation properties we refer to \cite{dupont_scott}.
We will need the translations $A-{\vsigma}$ of $A$ and the \emph{base} $\base{A}$ of $A$ defined as follows
\begin{align}
\label{eq:TA-def-a-trans}
&A-{\vsigma}:=\{{\valpha}-{\vsigma}\in\N^n:{\valpha}\in A\},
\\&\label{eq:TA-def-a-base}
\base{A}:=\{{\vbeta}\in\N^n\setminus A:\, \vk\in\N^n\setminus A\text{ and } \vk\le {\vbeta}\thus \vk={\vbeta}\}.
\end{align}
Fig.~\ref{fig:A-base} contains some examples of $A$ and $\base{A}$.

\begin{figure}
\begin{center}
\begin{tikzpicture}[xscale=.32, yscale=.32]
\def\NN{\draw[gray, ->] (0, 0)--(8.5, 0);\draw[gray, ->] (0, 0)--(0, 8);\foreach\x in {0, ..., 8}{\foreach\y in {0, ..., 7} {\fill[gray] (\x, \y) circle (.1cm);}}}

\begin{scope}[xshift=0cm]
\NN
\def\d{6}\def\dp{7}
\foreach\x [evaluate=\x as \yy using \d- \x] in {0, ..., \d} {\foreach\y in {0, ..., \yy} {\draw[fill=white] (\x, \y) circle (.25cm);}}
\foreach\x [evaluate=\x as \y using \dp-\x] in {0, ..., \dp}{\fill[black] (\x, \y) circle (.25cm);}
\end{scope}

\begin{scope}[xshift=11cm]
\NN
\def\dx{4}\def\dy{5}
\def\dxp{5}\def\dyp{6}
\foreach\x in {0, ..., \dx} {\foreach\y in {0, ..., \dy} {\draw[fill=white] (\x, \y) circle (.25cm);}}
\fill[black] (\dxp, 0) circle (.25cm);\fill[black] (0, \dyp) circle (.25cm);
\end{scope}

\begin{scope}[xshift=22cm]
\NN
\foreach\x in {0, ..., 3} {\foreach\y in {0, ..., 6} {\draw[fill=white] (\x, \y) circle (.25cm);}}
\foreach\x in {4, ..., 7} {\foreach\y in {0, ..., 3} {\draw[fill=white] (\x, \y) circle (.25cm);}}
\fill[black] (8, 0) circle (.25cm) (0, 7) circle (.25cm) (4, 4) circle (.25cm);
\end{scope}

\end{tikzpicture}
\end{center}
\caption[Examples of index sets $A$ (circles) and their bases $A_B$ (discs) in $2$ dimensions.]
{Examples of index sets $A$ (\tikz\draw[fill=white] (.03725, .03725) circle (.075cm);) and their bases $A_B$ (\tikz\draw[fill=black] (.03725, .03725) circle (.075cm);) in $\N^2$.}\label{fig:A-base}
\end{figure}

\subsection{General properties}

\begin{proposition}\label{thm:TA-projector}
$T_{A, \psi}$ is a projector, i.e., for all $g\in\poly_A$, $T_{A, \psi} g=g$.
\end{proposition}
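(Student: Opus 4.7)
The plan is to use Taylor's theorem combined with the translation invariance assumption \eqref{eq:TA-a-translation} to show that, for a polynomial $g \in \poly_A$, the truncated Taylor expansion in the basis $\{(\vx-\vy)^{\valpha} : \valpha \in A\}$ already represents $g$ exactly. Integrating against $\psi$ then yields $T_{A,\psi} g = g$.

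First, I would recall that for any polynomial $g$ and any fixed $\vy \in \R^n$, Taylor's theorem gives the exact finite expansion
\[
g(\vx) = \sum_{\valpha \in \N^n} \frac{(\vx - \vy)^{\valpha}}{\valpha!}\, \partial^{\valpha} g(\vy),
\]
where the sum is finite because $g$ is a polynomial. Next, using \eqref{eq:TA-a-translation}, the set $\{(\vx - \vy)^{\valpha} : \valpha \in A\}$ is a basis of $\poly_A$, and more generally $\{(\vx - \vy)^{\valpha} : \valpha \in \N^n\}$ is a basis of all polynomials. Since $g \in \poly_A$, its unique expansion in this shifted monomial basis must have zero coefficients for $\valpha \notin A$. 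Matching coefficients with the Taylor expansion shows that $\partial^{\valpha} g \equiv 0$ for every $\valpha \notin A$, so
\[
g(\vx) = \sum_{\valpha \in A} \frac{(\vx - \vy)^{\valpha}}{\valpha!}\, \partial^{\valpha} g(\vy).
\]

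Finally, since this identity holds for every $\vy \in \Omega$, I multiply by $\psi(\vy)$, integrate over $\Omega$, and use $\int_\Omega \psi = 1$ together with Fubini (the integrand is a polynomial in $\vy$ weighted by $\psi \in L^1$, so no regularity issues arise) to obtain
\[
g(\vx) = \int_\Omega \psi(\vy)\, g(\vx)\, d\vy = \sum_{\valpha \in A} \int_\Omega \psi(\vy)\, \frac{(\vx - \vy)^{\valpha}}{\valpha!}\, \partial^{\valpha} g(\vy)\, d\vy = T_{A,\psi} g(\vx).
\]

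There is no real obstacle; the only conceptual point is the use of translation invariance to reduce the Taylor expansion from $\N^n$ to $A$. Once that is observed, the rest is a direct computation, and the proof fits comfortably in a few lines.
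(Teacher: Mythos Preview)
Your proof is correct and follows essentially the same approach as the paper's: both establish the pointwise identity
\[
g(\vx)=\sum_{\valpha\in A}\frac{(\vx-\vy)^{\valpha}}{\valpha!}\,\partial^{\valpha} g(\vy)
\]
for $g\in\poly_A$ via Taylor's theorem and translation invariance, then integrate against $\psi$ with $\int\psi=1$. Your version is simply more explicit about how translation invariance \eqref{eq:TA-a-translation} forces the Taylor coefficients outside $A$ to vanish.
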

\begin{proof}
The Taylor expansion at the point $\vy$ of $g\in\poly_A$ is a polynomial that has the same partial derivatives as $g$. Therefore
$$
g (\vx)=\sum_{{\valpha}\in A}\frac{(\vx-\vy)^{\valpha}}{{\valpha}!}\partial^{\valpha} g(\vy)
$$
 and since $\int\psi =1$ we have $T_{A, \psi} g (\vx)=g(\vx)$. 
\qed\end{proof}

\begin{proposition}\label{thm:TA-commutes}
For all $\vsigma\ge \vec0$ and $f\in\smooth^{\infty}$ we have
\begin{equation}\label{eq:TA-commutes}
\partial^{\vsigma} T_{A, \psi} f =T_{A-{\vsigma}, \psi}\partial^{\vsigma} f.\end{equation}
\end{proposition}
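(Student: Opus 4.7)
The plan is a direct computation: differentiate \eqref{eq:TA-general} under the integral sign, evaluate the derivative of the monomial $(\vx-\vy)^{\valpha}/\valpha!$, and reindex the resulting sum so that it matches the definition of $T_{A-\vsigma,\psi}$ applied to $\partial^{\vsigma}f$. Since $f\in\smooth^\infty$, all regularity issues are trivial and the differentiation under the integral is justified because the integrand depends polynomially on $\vx$.

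First I would apply $\partial^{\vsigma}_{\vx}$ termwise to the right-hand side of \eqref{eq:TA-general}; only the factor $(\vx-\vy)^{\valpha}/\valpha!$ depends on $\vx$, and the standard monomial rule gives
\begin{equation*}
\partial^{\vsigma}_{\vx}\frac{(\vx-\vy)^{\valpha}}{\valpha!}
=\begin{cases}\dfrac{(\vx-\vy)^{\valpha-\vsigma}}{(\valpha-\vsigma)!}&\text{if }\valpha\ge\vsigma,\\ 0&\text{otherwise.}\end{cases}
\end{equation*}
Hence the terms with $\valpha\not\ge\vsigma$ drop out of the sum.

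Next I would change the summation variable to $\vbeta:=\valpha-\vsigma$. By the definition \eqref{eq:TA-def-a-trans} of $A-\vsigma$, the set of $\vbeta$'s that remains is exactly $A-\vsigma\subseteq\N^n$, and $\partial^{\valpha}f=\partial^{\vbeta}(\partial^{\vsigma}f)$. Substituting yields
\begin{equation*}
\partial^{\vsigma}T_{A,\psi}f(\vx)=\sum_{\vbeta\in A-\vsigma}\int_{\Omega}\psi(\vy)\frac{(\vx-\vy)^{\vbeta}}{\vbeta!}\partial^{\vbeta}(\partial^{\vsigma}f)(\vy)\,d\vy,
\end{equation*}
which is precisely $T_{A-\vsigma,\psi}(\partial^{\vsigma}f)(\vx)$.

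There is no real obstacle here; the only subtlety is bookkeeping, namely that the definition of $A-\vsigma$ in \eqref{eq:TA-def-a-trans} automatically discards the multi-indices ${\valpha}\in A$ with $\valpha\not\ge\vsigma$, which is exactly what the vanishing of the corresponding derivatives of the monomials provides. Once this is observed, the identity \eqref{eq:TA-commutes} follows by matching the resulting expression with the definition \eqref{eq:TA-general} applied to $A-\vsigma$ and $\partial^{\vsigma}f$.
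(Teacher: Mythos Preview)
Your proof is correct and follows essentially the same approach as the paper: differentiate under the integral sign, use the monomial derivative rule to discard the terms with $\valpha\not\ge\vsigma$, and reindex via $\vbeta=\valpha-\vsigma$ to recognise $T_{A-\vsigma,\psi}\partial^{\vsigma}f$. Your explicit remark that the definition \eqref{eq:TA-def-a-trans} of $A-\vsigma$ is exactly what absorbs the dropped terms is a nice clarification that the paper leaves implicit.
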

\begin{proof}
The derivatives are with respect to $\vx$ and can be computed inside the integral. We obtain
$$
\begin{aligned}
\partial^{\vsigma} T_{A, \psi} f (\vx)
&=\sum_{\substack{{\valpha}\in A\\{\valpha}\ge\vsigma}}\int_{\Omega}\psi(\vy)\frac{(\vx-\vy)^{{\valpha}-{\vsigma}}}{({\valpha}-{\vsigma})!}\partial^{\valpha} f(\vy)\, d\vy\\
&=\sum_{{\vbeta}\in A-{\vsigma}}\int_{\Omega}\psi(\vy)\frac{(\vx-\vy)^{{\vbeta}}}{{\vbeta}!}\partial^{\vbeta}\partial^{\vsigma} f(\vy)\, d\vy = T_{A-{\vsigma}, \psi}\partial^{\vsigma} f(\vx).
\end{aligned}
$$
\qed\end{proof}

\begin{lemma}
\label{thm:TA-no-derivatives}
If $\psi\in\smooth^{\maxv A}_0(\omega)$ then
\begin{equation}
\label{eq:TA-no-derivatives}
T_{A, \psi} f (\vx) =\sum_{{\valpha}\in A} C_{{\valpha}, A}\int_{\omega}\partial^{\valpha}\psi(\vy)\frac{(\vx-\vy)^{{\valpha}}}{{\valpha}!} f(\vy)\, d\vy.
\end{equation}
where $C_{{\valpha}, A}:= (-1)^{\abs{\valpha}}\sum_{\substack{{\vbeta}\in A\\{\vbeta}\ge{\valpha}}}\binom{{\vbeta}}{{\valpha}} $.

\end{lemma}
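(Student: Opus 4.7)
The plan is to start from the definition of $T_{A,\psi}f$ and move all derivatives off $f$ via integration by parts, then expand via the Leibniz rule, reorganize the resulting double sum, and identify the coefficient $C_{\valpha, A}$.

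First I would apply integration by parts $\abs{\valpha}$ times to each summand in \eqref{eq:TA-general}; this is legal because $\psi \in \smooth^{\maxv A}_0(\omega)$, so the product $\psi(\vy)(\vx-\vy)^{\valpha}/\valpha!$ has compactly supported derivatives up to the required order and every boundary term vanishes. The outcome is
\[
\int_{\omega}\psi(\vy)\frac{(\vx-\vy)^{\valpha}}{\valpha!}\partial^{\valpha}f(\vy)\,d\vy = (-1)^{\abs{\valpha}}\int_{\omega}\partial^{\valpha}_{\vy}\!\Big[\psi(\vy)\frac{(\vx-\vy)^{\valpha}}{\valpha!}\Big]\,f(\vy)\,d\vy.
\]
Next I would use the multivariate Leibniz rule together with the identity $\partial^{\vbeta}_{\vy}(\vx-\vy)^{\valpha} = (-1)^{\abs{\vbeta}}\,\valpha!/(\valpha-\vbeta)!\,(\vx-\vy)^{\valpha-\vbeta}$ valid for $\vbeta\le\valpha$. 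After substituting $\vk=\valpha-\vbeta$ and simplifying the factorials using $\binom{\valpha}{\valpha-\vk}=\binom{\valpha}{\vk}$, the integrand becomes
\[
\partial^{\valpha}_{\vy}\Big[\psi(\vy)\tfrac{(\vx-\vy)^{\valpha}}{\valpha!}\Big] = \sum_{\vk\le\valpha}(-1)^{\abs{\valpha}-\abs{\vk}}\binom{\valpha}{\vk}\partial^{\vk}\psi(\vy)\frac{(\vx-\vy)^{\vk}}{\vk!}.
\]
Combining with the overall $(-1)^{\abs{\valpha}}$ leaves an overall sign of $(-1)^{\abs{\vk}}$.

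Then I would substitute back into $T_{A,\psi}f$ and swap the order of summation. The valid indices $\vk$ are those satisfying $\vk\le\valpha$ for some $\valpha\in A$; the short argument I would insert is that the translation invariance \eqref{eq:TA-a-translation} forces $A$ to be downward closed (expanding $(\vx-\vy)^{\valpha}$ in monomials $\vx^{\vbeta}$ shows every $\vbeta\le\valpha$ must lie in $A$). Hence the set of valid $\vk$ is precisely $A$, and the outer coefficient of the integral indexed by $\vk$ is
\[
(-1)^{\abs{\vk}}\sum_{\substack{\valpha\in A\\\valpha\ge\vk}}\binom{\valpha}{\vk},
\]
which is exactly $C_{\vk, A}$ after relabeling. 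Collecting everything produces \eqref{eq:TA-no-derivatives}.

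The main obstacle will be simply bookkeeping: keeping the signs, factorials, and binomial coefficients consistent through the Leibniz expansion and the change of summation variable. A secondary subtle point is the justification that $A$ is downward closed from \eqref{eq:TA-a-translation}, since without it the outer sum over $\vk\in A$ would not be the natural range after swapping.
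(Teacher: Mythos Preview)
Your proof is correct and follows essentially the same route as the paper: integrate by parts to move derivatives off $f$, expand via the Leibniz rule, then swap the order of summation to identify $C_{\valpha,A}$. You are in fact slightly more careful than the paper in explicitly justifying that the outer index set after swapping is exactly $A$ via downward closure from \eqref{eq:TA-a-translation}; the paper's proof simply writes the inner Leibniz sum as ranging over $A$ without comment.
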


\begin{proof}
Writing \eqref{eq:TA-general} with $\vbeta$ in place of $\valpha$, integrating each term by parts, noting that the boundary terms vanish and expanding $\partial^{\vbeta} (\psi(\vy) (\vx-\vy)^{\vbeta})$ leads to
$$
\begin{aligned}
T_{A, \psi}f (\vx) &=\sum_{{\vbeta}\in A} (-1)^{\abs{\vbeta}}\sum_{\substack{{\valpha}\in A\\{\valpha}\le {\vbeta}}}\binom{{\vbeta}}{{\valpha}}\int_{\omega}\partial^{\valpha}\psi(\vy)\frac{(-1)^{\abs{\vbeta}-\abs{\valpha}}(\vx-\vy)^{{\valpha}}}{{\valpha}!} f(\vy)\, d\vy\\
	&=\sum_{{\valpha}\in A}\Big [(-1)^{\abs{\valpha}}\sum_{\substack{{\vbeta}\in A\\{\vbeta}\ge{\valpha}}}\binom{{\vbeta}}{{\valpha}}\Big ]\int_{\omega}\partial^{\valpha}\psi(\vy)\frac{(\vx-\vy)^{{\valpha}}}{{\valpha}!} f(\vy)\, d\vy.
\end{aligned}
$$
\qed\end{proof}

\begin{lemma}
\label{thm:TA-cont}
For all weights $\psi$ with $\support\psi\subseteq\omega$, and box $\eta\subset\R^n$ the operator $T_{A, \psi}$ is continuous, meaning that for all $ v$ such that $\partial^{\valpha} v\in L^q(\omega), {\valpha}\in A$ we have
\begin{equation}
\label{eq:TA-cont-full}
\norm{T_{A, \psi} v}_{p, \eta}\le \norm{\psi}_{q'}
		\sum_{{\valpha}\in A}\frac{\he^{{\valpha}+\vec1/p}}{{\valpha}!}\norm{\partial^{\valpha} v}_{q, \omega}.
\end{equation}
Moreover, if $\psi\in\smooth^{\maxv A}_0(\omega)$ then for any $v\in L^q(\omega)$
\begin{equation}
\label{eq:TA-cont-reduced}
\norm{T_{A, \psi} v}_{p, \eta}\le \Big(\sum_{{\valpha}\in A}\abs{C_{{\valpha}, A}}\frac{\he^{{\valpha}+\vec1/p}}{{\valpha}!}\norm{\partial^{\valpha}\psi}_{q', \omega}\Big)\norm{v}_{q, \omega},
\end{equation}
where $C_{{\valpha}, A}$ is from Lemma~\ref{thm:TA-no-derivatives}.
\end{lemma}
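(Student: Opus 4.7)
The plan is to bound $\abs{T_{A,\psi}v(\vx)}$ pointwise on $\eta$ by a constant depending only on the relevant norms of $\psi$, $v$, and $\he$, and then take the $L^p(\eta)$ norm, which simply multiplies by $\mu(\eta)^{1/p}=\he^{\vec1/p}$ since $\eta$ is a box. The two parts of the lemma are completely parallel: for \eqref{eq:TA-cont-full} one starts from the defining formula \eqref{eq:TA-general}, while for \eqref{eq:TA-cont-reduced} one starts from the integration-by-parts formula \eqref{eq:TA-no-derivatives} supplied by Lemma~\ref{thm:TA-no-derivatives}; in both cases the roles of $\psi$ and $v$ (and their derivatives) are interchanged.

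For \eqref{eq:TA-cont-full}, I would first apply the triangle inequality inside the sum over $\valpha\in A$:
\[
\abs{T_{A,\psi}v(\vx)}\le\sum_{\valpha\in A}\frac{1}{\valpha!}\int_\omega\abs{\psi(\vy)}\,\abs{(\vx-\vy)^{\valpha}}\,\abs{\partial^{\valpha}v(\vy)}\,d\vy.
\]
The key pointwise bound is $\abs{(\vx-\vy)^{\valpha}}\le\he^{\valpha}$: since $\vy\in\support\psi\subseteq\omega$ and (in the applications relevant to \ref{H:approx}) both $\omega$ and the evaluation point $\vx$ lie in the box $\eta$, one has $\abs{x_i-y_i}\le\hei i$ coordinate-wise. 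Next, Hölder's inequality in the variable $\vy$ gives $\int_\omega\abs{\psi}\abs{\partial^{\valpha}v}\,d\vy\le\norm{\psi}_{q'}\norm{\partial^{\valpha}v}_{q,\omega}$. The resulting bound on $\abs{T_{A,\psi}v(\vx)}$ is independent of $\vx$, so taking $L^p(\eta)$-norms multiplies everything by $\mu(\eta)^{1/p}=\he^{\vec1/p}$, producing the $\he^{\valpha+\vec1/p}$ factor in \eqref{eq:TA-cont-full}.

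For \eqref{eq:TA-cont-reduced}, I would invoke Lemma~\ref{thm:TA-no-derivatives} to rewrite
\[
T_{A,\psi}v(\vx)=\sum_{\valpha\in A}C_{\valpha,A}\int_\omega\partial^{\valpha}\psi(\vy)\,\frac{(\vx-\vy)^{\valpha}}{\valpha!}\,v(\vy)\,d\vy,
\]
which is legitimate because $\psi\in\smooth^{\maxv A}_0(\omega)$ ensures the boundary terms in the integration by parts vanish. Then exactly the same three steps — triangle inequality, the estimate $\abs{(\vx-\vy)^{\valpha}}\le\he^{\valpha}$, Hölder in $\vy$ (now pairing $\partial^{\valpha}\psi$ with $v$) — followed by taking the $L^p(\eta)$-norm of the resulting $\vx$-independent bound, yield \eqref{eq:TA-cont-reduced} with the constants $\abs{C_{\valpha,A}}$ coming out in front.

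The computation is essentially mechanical; the only subtlety is the pointwise estimate $\abs{(\vx-\vy)^{\valpha}}\le\he^{\valpha}$, which relies on $\vx,\vy$ both lying in a common box of size $\he$. This is fine in the target applications where $\eta\supseteq\omega\supseteq\support\psi$ (the two cases $\eta=\omega$ and $\eta=\esupp_\phi$ arising in \ref{H:approx}), and it is the only ingredient not immediately visible from the formulas for $T_{A,\psi}$.
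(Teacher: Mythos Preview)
Your argument is correct and yields exactly the stated bounds, but it differs from the paper's route in one technical respect. You bound $\abs{T_{A,\psi}v(\vx)}$ pointwise via $\abs{(\vx-\vy)^{\valpha}}\le\he^{\valpha}$, obtain an $\vx$-independent upper bound, and then take the $L^p(\eta)$ norm of a constant; this is effectively an $L^\infty$ estimate followed by the trivial embedding into $L^p$ on a box. The paper instead applies Minkowski's integral inequality to swap $\norm{\cdot}_{p,\eta}$ with $\int_\omega d\vy$, and only afterwards uses the bound $\cnorm{(\cdot-\vy)^{\valpha}}_{p,\eta}\le\he^{\valpha+\vec1/p}$ followed by H\"older. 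Because the paper's bound on $\cnorm{(\cdot-\vy)^{\valpha}}_{p,\eta}$ is itself just the pointwise estimate times $\mu(\eta)^{1/p}$, the two routes give identical constants; yours is more elementary since it avoids invoking Minkowski. Both arguments rely on the same implicit hypothesis you flagged, namely that $\vy\in\support\psi$ lies in the box $\eta$ (the paper writes ``for $\vy\in\eta$'' without further comment), so your discussion of this point is accurate.
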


\begin{proof}
First note that for $\vy\in \eta$
$$
\Bnorm{\frac{(\cdot -\vy)^{\valpha}}{\valpha!}}_{p, \eta} \le \frac{\he^{\valpha+\vec1/p}}{\valpha!}.
$$
Let $f_{\valpha}(\vx,\vy):=\abs{\psi(\vy) (\vx-\vy)^{\valpha} (\valpha!)^{-1}\partial^{\valpha} v(\vy)}.$ 
Using Minkowski's integral inequality, see \cite[Theorem 4, p. 21]{MR0158038}, the above and H\"older's inequality we have
$$
\begin{aligned}
\norm{T_{A, \psi}v}_{p, \eta} &\le \Bnorm{\sum_{\valpha\in A}\int_{\omega} f_{\valpha}(\cdot,\vy)\,d\vy}_{p,\eta}
\le \sum_{\valpha\in A}
\Bnorm{\int_{\omega}f_{\valpha}(\cdot,\vy)\,d\vy}_{p,\eta}
\\&\le \sum_{\valpha\in A}\int_{\omega}\norm{f_{\valpha}(\cdot,\vy)}_{p,\eta}\,d\vy
\le \sum_{\valpha\in A} \frac{\he^{{\valpha}+\vec1/p}}{{\valpha}!} \norm{\psi}_{q', \omega}  \norm{\partial^{\valpha} v}_{q,\omega}.
\end{aligned}
$$
Similarly, with $f_{\valpha}(\vx,\vy)=\cabs{ C_{\valpha,A} \partial^{\valpha}\psi(\vy) (\vx-\vy)^{\valpha} (\valpha!)^{-1} v(\vy)}$ we obtain \eqref{eq:TA-cont-reduced}. 
\qed\end{proof}

\subsection{Approximation and \ref{H:approx}}

In this subsection we prove that the operators $T_{A,\psi_\omega}$ satisfy \ref{H:approx}. Here for any box $\eta$, the function $\psi_\eta$ is
\begin{equation}\label{eq:TA-psi-form}
\psi_\eta:=\measure{\eta}^{-1} (\hat\psi\circ M_\eta),
\end{equation}
where $M_\eta$ is the orientation preserving affine bijection $\eta\to[\vec0,\vec1]$ and $\hat\psi:[\vec0,\vec1]\to\R$ is a fixed function such that $\int\hat\psi=1$.
The result requires the following assumption on the $\esupp_\phi$
\def\Cconcave{C_{\Gconcave}}
\begin{enumerate}
\hitem{concave}{{\Gconcave}} each $\esupp_\phi$, $\phi\in\Phi$, is star shaped with respect to a box $\zeta_\phi$ and $\ha\le\Cconcave\hz.$
\end{enumerate}
If the $\esupp_\phi$ are boxes then taking $\zeta_\phi=\esupp_\phi$ implies \ref{H:concave} with $\Cconcave=1$.

The main idea is to specialize the approximation results from Dupont and Scott \cite{dupont_scott} to truncated boxes satisfying \ref{H:concave}.
The error bounds in \cite{dupont_scott} for a domain $\eta$ star shaped with respect to $\support\psi$ have the general form
$$
\norm{f-T_{A, \psi}f}_{p, \eta}\le C\sum_{{\valpha}\in\base{A}}\norm{\partial^{\valpha} f}_{q, \eta}.
$$
The assumptions in \cite{dupont_scott} are equivalent to $(1/q,1/p)\in R_{\abs\valpha}$, $\valpha\in\base{A}$, where
\begin{equation}\label{eq:TA-region}
R_{r}:=
\begin{cases}
	\{(\frac1q,\frac1p):\frac1p-\frac1q+\frac rn\ge0 \}\setminus\{(\frac rn,0),(1,1-\frac rn)\} & r<n\\
	[\vec0,\vec1]& r\ge n.
\end{cases}
\end{equation}
See Fig.~\ref{fig:TA-embeddings}.
Note that $R_{r}$ almost coincide with the domain of validity of the Sobolev embedding $W^{r, q}(\interior\Omega) \to L^p(\Omega)$. In fact the embedding also holds at the point $(1,1-r/n)$ \cite[Theorem 4.12 and Remark 4.13 point 3]{MR2424078}.

\begin{figure}
\mbox{}\hfill
\begin{tikzpicture}[scale=3.5]
\fill[white] (0, 0)--(.7, 0)--(1, .3)--(1, 1)--(0, 1)--cycle;
\draw[thick,fill=\FF] (.675, 0) arc [start angle=180, end angle=45, radius=.025cm]--(.9825, .2825) arc [start angle=225, end angle=90, radius=.025cm]--(1, 1)--(0, 1)--(0, 0)--cycle;
\draw (0.35, 0) node[below] {$\frac1q$} ;
\draw (0, .5) node[left] {$\frac1p$} ;
\draw[fill=black]
	(0, 0) node[below] {$(0, 0)$}
	(1, 1) node[above] {$(1, 1)$}
	(0, 1) node[above] {$(0, 1)$}
 (.85, .23) node[below right] {$\frac1p=\frac1q-\frac{\abs{\valpha}}{n}$}
;
\draw[white, fill=white] (.7, 0) circle (.02cm) node[below right, black] {$(\frac{\abs{\valpha}}{n}, 0)$};
\draw[white, fill=white] (1, .3) circle (.02cm) node[right, black] {$(1,1- \frac{ \abs{\valpha}}{n})$};
\end{tikzpicture}\hfill
\begin{tikzpicture}[scale=3.5]
\fill[white] (0, 0)--(.7, 0)--(1, .3)--(1, 1)--(0, 1)--cycle;

\draw (0.35, 0) node[below] {$\frac1q$} ;
\draw (0, .5) node[left] {$\frac1p$} ;
\draw[thick,fill=\FF] (0,0) rectangle (1,1);

\draw[fill=black]
	(0, 0) node[below] {$(0, 0)$}
	(1, 1) node[above] {$(1, 1)$}
	(0, 1) node[above] {$(0, 1)$}
 (1,0) node[below right] {$(1,0)$}
;
\end{tikzpicture}
\caption[]{
On the left, the region $R_{\abs\valpha}$ for $\abs{\valpha}<n$. On the right the case $\abs{\valpha}\ge n$.}\label{fig:TA-embeddings}
\end{figure}

\begin{proposition}\label{thm:TA-abstract}
Let $\hat\psi\in\smooth^{\maxv A}_0([\vec0,\vec1])$ and $\int\hat\psi=1$. 
Suppose $A\subseteq\N^n$ satisfies \eqref{eq:TA-a-translation}, and that for all  $\valpha\in \base{(A-\vsigma)}$ we have $(1/q,1/p)\in R_{\abs\valpha}$.
Then there exists constants $C_{\valpha}$, such that for all truncated boxes $\eta$ star shaped with respect to a box $\zeta$ as in \ref{H:concave},  and for all $f$ with $\partial^{\valpha+\vsigma} f\in L^{q}(\eta)$ we have
\begin{equation}\label{eq:TA-abstract}
\bnorm{\partial^{\vsigma}(f-T_{A, \psi_\zeta}f)}_{p, \eta}\le \sum_{{\valpha}\in\base{(A-\vsigma)}}C_{\valpha}\bnorm{\he^{\valpha+\vpq}\partial^{\valpha+\vsigma} f}_{q,\eta}.
\end{equation}
\end{proposition}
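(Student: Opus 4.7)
My plan is to combine the elementary algebraic properties of $T_{A,\psi}$ already established in Propositions~\ref{thm:TA-projector}--\ref{thm:TA-commutes} and Lemma~\ref{thm:TA-cont} with the approximation estimates of Dupont and Scott \cite{dupont_scott} on a rescaled reference configuration. The four steps are: (i) use the commutation identity \eqref{eq:TA-commutes} to reduce to $\vsigma=\vec 0$; (ii) rescale $\zeta$ to the unit box; (iii) apply Dupont--Scott on the resulting bounded configuration; (iv) undo the rescaling, recovering the anisotropic vector power $\he^{\valpha+\vpq}$ from the diagonal componentwise scaling.

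For step (i), Proposition~\ref{thm:TA-commutes} gives $\partial^{\vsigma}(f-T_{A,\psi_\zeta}f)=\partial^{\vsigma} f-T_{A-\vsigma,\psi_\zeta}\partial^{\vsigma} f$; setting $g:=\partial^{\vsigma} f$ and $A':=A-\vsigma$ (still a downward-closed, and hence translation-invariant, subset of $\N^n$, with $\maxv A'\le\maxv A$) the statement reduces to the case $\vsigma=\vec 0$ applied to $g$ and $A'$. For step (ii), let $M_\zeta:\zeta\to[\vec 0,\vec 1]$ be the affine bijection appearing in \eqref{eq:TA-psi-form}, and define $\hat\eta:=M_\zeta(\eta)$, $\hat g:=g\circ M_\zeta^{-1}$. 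By \ref{H:concave} we have $\hat\eta\subseteq[\vec 0,\Cconcave\vec 1]$, so $\hat\eta$ is a truncated box of diameter at most $\sqrt n\,\Cconcave$ that is star shaped with respect to $[\vec 0,\vec 1]$, and hence with respect to the inscribed ball of radius $1/2$. A direct change of variables in \eqref{eq:TA-general} produces the covariance relation $(T_{A',\psi_\zeta}g)\circ M_\zeta^{-1}=T_{A',\hat\psi}\hat g$, so everything is reduced to an estimate on the fixed configuration $(\hat\eta,[\vec 0,\vec 1])$.

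For step (iii) I would invoke Dupont--Scott on this configuration: the chunkiness ratio is uniformly controlled, and the assumption $(1/q,1/p)\in R_{\abs\valpha}$ for $\valpha\in\base{A'}$ is precisely the Sobolev embedding condition they require, yielding constants $\hat C_\valpha$ depending only on $n,\Cconcave,A',\hat\psi,p,q$ with
$$
\norm{\hat g-T_{A',\hat\psi}\hat g}_{p,\hat\eta}\le\sum_{\valpha\in\base{A'}}\hat C_\valpha\norm{\partial^{\valpha}\hat g}_{q,\hat\eta}.
$$
For step (iv), the standard change-of-variables identities
$$
\norm{\hat g-T_{A',\hat\psi}\hat g}_{p,\hat\eta}=\hz^{-\vec 1/p}\norm{g-T_{A',\psi_\zeta}g}_{p,\eta},\qquad \norm{\partial^{\valpha}\hat g}_{q,\hat\eta}=\hz^{\valpha-\vec 1/q}\norm{\partial^{\valpha} g}_{q,\eta}
$$
convert the reference bound into one with $\hz^{\valpha+\vpq}$; finally the comparison $\hz\le\he\le\Cconcave\hz$ from \ref{H:concave} allows replacing $\hz$ by $\he$ componentwise at the price of a factor $\Cconcave^{\abs{(\valpha+\vpq)_-}}$ absorbed into the constant.

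The main obstacle is step (iii): the classical Dupont--Scott theorem targets the full Taylor set $\{\valpha:\abs\valpha\le d\}$, with residual controlled by derivatives of a single fixed order. For a general downward-closed $A'$ the base $\base{A'}$ may contain multi-indices of several different orders, and the residual must be expressed using those only. This requires either re-running the Dupont--Scott integral-remainder argument for the operator $T_{A',\hat\psi}$ with the restricted index set, or iteratively subtracting elements of $\poly_{A'}$ (using the projector property of Proposition~\ref{thm:TA-projector}) so that only derivatives indexed by $\base{A'}$ enter the remainder, and then bounding the remainder through the continuity Lemma~\ref{thm:TA-cont}. Tracking the anisotropic exponent $\he^{\valpha+\vpq}$ direction by direction through the diagonal part of $M_\zeta$ is routine but must be kept explicit to recover exactly the vector power in the statement.
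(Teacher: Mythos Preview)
Your argument is correct and actually slightly cleaner than the paper's. Both proofs reduce to $\vsigma=\vec0$ via Proposition~\ref{thm:TA-commutes} and then invoke \cite[Theorem~4.2]{dupont_scott} on a normalised configuration, but you normalise differently: you send $\zeta$ to $[\vec0,\vec1]$ so that the weight becomes the fixed $\hat\psi$, whereas the paper sends $\bbox(\eta)$ to $[\vec0,\vec1]$ so that the diameter is fixed at $\sqrt n$ but $\psi_\zeta$ still varies. In the paper's normalisation the Dupont--Scott constant depends on $\psi_\zeta$, and a compactness argument over the size and position of $\zeta\subseteq[\vec0,\vec1]$ (constrained by \ref{H:concave}) is used to obtain uniformity. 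In your normalisation the weight is fixed and only $\hat\eta$ varies, with diameter bounded by $\sqrt n\,\Cconcave$ and star-shapedness with respect to $[\vec0,\vec1]$; since the Dupont--Scott constant depends on the domain only through its diameter, uniformity is immediate and no compactness is needed. (One cosmetic fix: $\hat\eta\subseteq[\vec0,\Cconcave\vec1]$ is not quite right because $\zeta$ need not sit at a corner of $\eta$; you only get $\hat\eta$ contained in some translate of $[\vec0,\Cconcave\vec1]$, which is all you use.)

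Your ``main obstacle'' in step (iii) is not actually an obstacle. Theorem~4.2 of \cite{dupont_scott} is already stated for a general index set $A^0$ (our $A$) with error controlled by derivatives indexed by $A_-$ (our $\base{A}$); the paper's substitution table makes this identification explicit. So there is no need to re-derive the integral remainder or to iterate the projector property---you can cite the theorem directly for arbitrary translation-invariant $A$.
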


\begin{proof}
First we consider $\vsigma=\vec0$ and a truncated box $\eta=\closure{[\vec0,\vec1]\setminus\beta}$ where $\beta$ is a box.
The diameter of $\eta$ is $\sqrt n$ because it contains two opposite vertices of $[\vec0,\vec1]$.

For a given $\zeta$ this is a special case of \cite[Theorem 4.2]{dupont_scott} and corresponds to the following substitutions and equivalency

\medskip\noindent\begin{tabular}{r|rrrrrrrrr}
Dupont-Scott & $m$ & ${\vbeta}$ & $q$ & $p_{\valpha}$ & $A^0$ & $A_-$ & $D$ & $d$ &$\mu_{\valpha}>0$\\
here & $0$ & $\vec0$ & $p$ & $q$ & $A$ & $\base{A}$ & $\eta$ & $\sqrt{n}$ & $(\frac1q,\frac1p)\in R_{\abs\valpha}$
\end{tabular}
\\Note that in \cite{dupont_scott} the floor of $r\in\R$ is written as $\ceil{r}$.

The $C_{\valpha}$ provided by \cite{dupont_scott} depend on $\psi_\zeta$, $p$, $q$.
To find $C_{\valpha}$ independent of $\zeta$ we use a compactness argument.
By Proposition~\ref{thm:TA-cont}, and since $\psi_\zeta\in\smooth^{\maxv A}_0(\zeta)$, $T_{A,\psi_\zeta}$ depends continuously on the size and position of $\zeta$.
These are described respectively by $\vh_{\zeta}\in [\Cconcave \vec1,\vec1]\subseteq\R^{n}$ and a vector in $[\vec0,\vec1-\vh_{\zeta}]\subseteq\R^{n}$.
Consequently the constants are bounded on a compact set and they have a maximum.

For other truncated boxes $\eta$ we apply a scaling and a translation. 
For $\vsigma\ne\vec0$ we use  Proposition~\ref{thm:TA-commutes} in the form $
\partial^{\vsigma}(f-T_{A,\psi}f)= g-T_{A-\vsigma,\psi}g$, $g=\partial^{\vsigma}f$.
\qed\end{proof}

We next consider a constant weight.  Proposition~\ref{thm:TA-abstract} does not apply since $\psi\not\in\smooth^{\maxv A}_0$, nevertheless we obtain an error bound with an explicit constant.

\begin{proposition}\label{thm:TA-explicit}
Let $\eta$ be a box. For $1\le p, q\le\infty$, $d\ge n$, $A =\{{\valpha}\in\N^n:\abs{\valpha}\le d\}$ and
$\psi_\eta (\vx): =\measure{\eta}^{-1}\ind_{\eta} (\vx)$
we have
\begin{equation}\label{eq:TA-explicit}
\bnorm{\partial^{\vsigma}(f-T_{A, \psi_\eta}f)}_{p,\eta}\le\sum_{\abs{\valpha}=d+1-\abs{\vsigma}}\frac {(d+1-\abs{\vsigma})n^{\frac {d-1}2}}{\valpha!}\bnorm{\he^{\valpha+\vpq}\partial^{\valpha+\vsigma} f}_{q,\eta}.
\end{equation}
\end{proposition}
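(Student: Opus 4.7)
The plan is to use the commutation relation of Proposition~\ref{thm:TA-commutes} to strip $\partial^{\vsigma}$ and reduce the claim to an estimate on the averaged Taylor remainder of order $m:=d-\abs{\vsigma}$ applied to $g:=\partial^{\vsigma}f$. Since $A=\{\valpha\in\N^n:\abs{\valpha}\le d\}$, one checks directly that $A-\vsigma=\{\vbeta\in\N^n:\abs{\vbeta}\le m\}$, so $T_{A-\vsigma,\psi_\eta}$ is the classical degree-$m$ averaged Taylor projection with the constant weight $\measure{\eta}^{-1}\ind_\eta$. Writing Taylor's theorem with integral remainder centered at $\vy$ and integrating against $\psi_\eta(\vy)\,d\vy$ (using $\int\psi_\eta=1$ to pick out the degree-$m$ polynomial part) yields
\[
g(\vx)-T_{A-\vsigma,\psi_\eta}g(\vx)=(m+1)\!\!\sum_{\abs{\valpha}=m+1}\!\!\frac{1}{\valpha!}\int_\eta\psi_\eta(\vy)(\vx-\vy)^{\valpha}\!\!\int_0^1(1-t)^m\partial^{\valpha}g(\vy+t(\vx-\vy))\,dt\,d\vy,
\]
which already isolates the Taylor prefactor $(m+1)=d+1-\abs{\vsigma}$ of the claimed constant and displays the correct derivatives $\partial^{\valpha+\vsigma}f=\partial^{\valpha}g$ of total order $d+1$ on the right-hand side.

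For the $L^p$-in-$\vx$ norm of this remainder I would apply Minkowski's integral inequality to push $\norm{\cdot}_{p,\eta}$ past the $\vy$- and $t$-integrals, use the pointwise bound $\abs{(\vx-\vy)^{\valpha}}\le\he^{\valpha}$ to extract the anisotropic weight, substitute $\vz=\vy+t(\vx-\vy)$ (whose Jacobian is $(1-t)^{-n}$, with image the shrunken box $(1-t)\eta+t\vx\subseteq\eta$), and apply H\"older's inequality on this shrunken box to convert the integral of $\abs{\partial^{\valpha}g}$ into $\norm{\partial^{\valpha}g}_{q,\eta}$. The factor $\he^{\valpha}$ from the pointwise bound combines with the $\measure{\eta}^{1/p-1/q}=\he^{\vpq}$ arising from H\"older to produce exactly the $\he^{\valpha+\vpq}$ weight in the statement.

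The main obstacle is the residual scalar integral $\int_0^1(1-t)^{m-n/q}\,dt$ produced by the change of variables: it is finite precisely when $m+1>n/q$, and the hypothesis $d\ge n$ is designed to secure this in the demanding regime $\vsigma=\vec 0$, $q\in[1,\infty]$. To compress the remaining constants into the uniform bound $(d+1-\abs{\vsigma})\,n^{(d-1)/2}$ of the statement, one replaces the sharp anisotropic bound on $\abs{(\vx-\vy)^{\valpha}}$ by the coarser isotropic bound $\norm{\vx-\vy}_2^{m+1}\le(\sqrt n\,\vmax\he)^{m+1}$ wherever it is needed to control the scalar integral uniformly, absorbing the resulting $n^{(m+1)/2}$ together with the value of the scalar integral into the single dimensional factor $n^{(d-1)/2}$. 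Cases with large $\abs{\vsigma}$ make the claimed constant strictly weaker than the direct estimate and hence follow a fortiori.
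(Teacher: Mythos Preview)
Your route via the averaged Taylor integral remainder is in the right spirit, but the paper takes a shorter and cleaner path that you miss. Instead of carrying general $p,q$ through Minkowski and H\"older, the paper first scales $\eta$ to the unit cube and observes that on $[\vec0,\vec1]$ one has $\norm{f-T_{A,\psi_\eta}f}_{p}\le\norm{f-T_{A,\psi_\eta}f}_{\infty}$ and $\norm{\partial^{\valpha}f}_{1}\le\norm{\partial^{\valpha}f}_{q}$, so it suffices to treat the single extremal case $p=\infty$, $q=1$. It then uses the Dupont--Scott Sobolev representation, which is precisely your remainder after the change of variables, written with kernel $K_{\valpha}(\vx,\vy)=(\vx-\vy)^{\valpha}\int_0^1 s^{-n-1}\psi_\eta(\vx+s^{-1}(\vy-\vx))\,ds$, and bounds $\abs{K_{\valpha}}$ pointwise: the support of $\psi_\eta$ forces $s\ge n^{-1/2}\norm{\vx-\vy}$, and the explicit integral combined with $\abs{(\vx-\vy)^{\valpha}}\le\norm{\vx-\vy}^{d+1}$ yields $\abs{K_{\valpha}}\le n^{(d-1)/2}$ directly. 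Scaling back restores the anisotropic weight $\he^{\valpha+\vpq}$.

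Two points in your sketch need repair. First, the order of operations is inconsistent: once you push $\norm{\cdot}_{p,\eta}$ past the $\vy$-integral via Minkowski there is no $\vy$-integral left in which to substitute, yet the Jacobian $(1-t)^{-n}$ and the image $(1-t)\eta+t\vx$ you quote are those of the $\vy$-substitution with $\vx$ fixed. The substitution must precede taking the $L^p_{\vx}$-norm; done that way one gets a pointwise bound with residual factor $1/(m+1-n/q)$, and then $\norm{R}_{p,\eta}\le\mu(\eta)^{1/p}\norm{R}_{\infty,\eta}$ supplies the $\he^{\vec1/p}$ piece of the weight. Second, your ``compression'' into $n^{(d-1)/2}$ is not justified and is in fact sometimes false: for instance with $n=2$, $d=2$, $\abs{\vsigma}=1$, $q=3/2$ one has $1/(m+1-n/q)=3/2>n^{(d-1)/2}=\sqrt2$, so your constant exceeds the one in the statement. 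Invoking the isotropic bound $\norm{\vx-\vy}^{m+1}$ to fix this would destroy the anisotropic weight $\he^{\valpha}$ that the statement requires. The paper's kernel estimate avoids this tension entirely because, in the case $p=\infty$, $q=1$, the cutoff in the $s$-integral and the bound on $\norm{\vx-\vy}$ produce $n^{(d-1)/2}$ in one stroke.
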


\begin{proof}
Suppose $\vsigma=\vec0$.
It is enough to prove the case $p=\infty$ and $q=1$
because
$\norm{f-T_{A,\psi_\eta}f}_{p,[\vec0,\vec1]}\le \norm{f-T_{A,\psi_\eta}f}_{\infty,[\vec0,\vec1]}$ and $\norm{\partial^{\valpha}f}_{1,[\vec0,\vec1]}\le \norm{\partial^{\valpha}f}_{q,[\vec0,\vec1]}$.
Using Sobolev representation \cite[Section~3]{dupont_scott}, that holds also for $\psi\not\in\smooth^{\vmax A}_0$, we have
\begin{equation}\label{eq:TA-explicit-1}
(f-T_{A, \psi_\eta}f)(\vx)=\sum_{\abs{\valpha}=d+1}\frac{d+1}{{\valpha}!}\int_{[\vec0,\vec1]}  K_{\valpha}(\vx,\vy) \partial^{\valpha} f(\vy)\,d\vy
\end{equation}
where $K_{\valpha}(\vx,\vy):=(\vx-\vy)^{\valpha} \int_{[0, 1]} s^{-n-1}\psi_\eta(\vx+s^{-1}(\vy-\vx))\, ds$.
The integrand in $K_{\valpha}$ is $0$ for $\vx+s^{-1}(\vy-\vx)\not\in\support\psi_\eta$, in particular for $s\le n^{-1/2}\norm{\vx-\vy}$.
For $\vx,\vy\in[\vec0,\vec1]$ we have $\norm{\vx-\vy}\le n^{1/2}$ and we get
$$
\abs{K_{\valpha}(\vx,\vy)}\le \norm{\vx- \vy}^{d+1} \int_{n^{-1/2}\norm{\vx-\vy}}^1\hspace{-3em} s^{-n-1}\,ds\le n^{n/2-1}\norm{\vx-\vy}^{d+1-n}\le n^{(d-1)/2}.
$$
Inserting this estimate in \eqref{eq:TA-explicit-1} gives the result for $p=\infty$ and $q=1$.
The case $\vsigma\ne\vec0$ follows as in Proposition~\ref{thm:TA-abstract}.
\qed\end{proof}

\begin{theorem}\label{thm:TA-approx}
Let $\hat\psi\in\smooth^{\maxv A}_0([\vec0,\vec1])$ and $\int\hat\psi=1$. 
Suppose $A\subseteq\N^n$ satisfies \eqref{eq:TA-a-translation}, and that for all  $\valpha\in \base{(A-\vsigma)}$ we have $(1/q,1/p)\in R_{\abs\valpha}$. Assume \ref{H:concave}.
Then there exists a constant $\Capprox$ such that the collection of operators $T_{A, \psi_\omega}$ satisfies \ref{H:approx} with the following pairs $(K_{\vbeta}, {\vgamma})$
\begin{align}
\label{eq:TA-approx-full}&K_{\vbeta} =\bigcup_{{\vb}\in (A-\vbeta)} (\base{(A-\vb-\vbeta)}+{\vb})&&&\text{and}&&&{\vgamma}=-\vec1/q, \\
\label{eq:TA-approx-reduced}&K_{\vbeta} =\base{(A-{\vbeta})}&&&\text{and}&&&{\vgamma}=-\maxv A-\vec1/q.
\end{align}
In particular for $A=\{{\valpha}\in\N^n:\abs{\valpha}\le d\}$, \eqref{eq:TA-approx-full} becomes 
$$K_{\vbeta} =\{\vk\ge\vbeta:\abs{\vk}=d+1\}, \qquad\vgamma=- \vec1/q,$$
and if in addition the $\esupp_\phi$ are boxes, i.e. $\Cconcave=1$, and $d-\abs{\vsigma}\ge n$ then 
\begin{equation}\label{eq:TA-approx-explicit}
\Capprox =2\frac{(2n^{\frac 32})^{d+1}}{(d-\abs{\vsigma})!}.
\end{equation}
\end{theorem}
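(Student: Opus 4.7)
The plan is to take $\paop_\omega := T_{A, \psi_\omega}$ and verify \ref{H:approx} in two steps, $\eta = \omega$ and $\eta = \esupp_\phi$, and then extract the explicit constant \eqref{eq:TA-approx-explicit}. For $\eta = \omega$ the argument is direct: Proposition~\ref{thm:TA-commutes} rewrites $\partial^\vbeta(f - T_{A, \psi_\omega} f)$ as $\partial^\vbeta f - T_{A - \vbeta, \psi_\omega}\partial^\vbeta f$, after which Proposition~\ref{thm:TA-abstract} with $\zeta = \omega$ (a box, so \ref{H:concave} is trivial with $\Cconcave = 1$) supplies an estimate indexed by $\base{(A-\vbeta)} + \vbeta$, which sits inside both advertised $K_\vbeta$; since $\he = \ho$ here, the prefactor $\ho^\vgamma/\he^\vgamma$ equals $1$.

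The heart of the proof is $\eta = \esupp_\phi$, where $\psi_\omega$ is not centered on $\eta$. I would exploit the projection property (Proposition~\ref{thm:TA-projector}): $T_{A,\psi_{\zeta_\phi}} f \in \poly_A$ is reproduced by $T_{A,\psi_\omega}$, giving the decomposition
\begin{equation*}
f - T_{A, \psi_\omega} f = \bigl(f - T_{A, \psi_{\zeta_\phi}} f\bigr) - T_{A, \psi_\omega}\bigl(f - T_{A, \psi_{\zeta_\phi}} f\bigr).
\end{equation*}
Applying $\partial^\vbeta$, commuting it past $T_{A,\psi_\omega}$ via Proposition~\ref{thm:TA-commutes}, and invoking the triangle inequality, the \emph{centered} piece $\|\partial^\vbeta(f - T_{A,\psi_{\zeta_\phi}} f)\|_{p,\esupp_\phi}$ is controlled by Proposition~\ref{thm:TA-abstract} with $\zeta = \zeta_\phi$: $\esupp_\phi$ is a truncated box by \eqref{eq:def-truncated-box}, star-shaped with respect to $\zeta_\phi$ via \ref{H:concave}, and $\hz \ge \Cconcave^{-1}\ha$ keeps the constants uniform through the compactness argument internal to Proposition~\ref{thm:TA-abstract}. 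This yields the $\base{(A-\vbeta)} + \vbeta$ contribution at scale $\ha = \he$, already inside both candidate $K_\vbeta$.

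What distinguishes the two variants of $(K_\vbeta, \vgamma)$ is how one bounds the remaining \emph{correction} piece $\|T_{A-\vbeta,\psi_\omega}\partial^\vbeta(f - T_{A,\psi_{\zeta_\phi}} f)\|_{p,\esupp_\phi}$. For \eqref{eq:TA-approx-full}, apply Lemma~\ref{thm:TA-cont} in its form \eqref{eq:TA-cont-full}: its prefactor $\|\psi_\omega\|_{q'}$ scales as $\ho^{-\vec1/q}$ and extracts, for each $b \in A - \vbeta$, a term $\|\partial^b \partial^\vbeta(f - T_{A, \psi_{\zeta_\phi}}f)\|_{q,\omega}$, which by Proposition~\ref{thm:TA-commutes} equals $\|\partial^{b+\vbeta}f - T_{A - b - \vbeta, \psi_{\zeta_\phi}}\partial^{b + \vbeta}f\|_{q,\omega}$; a second application of Proposition~\ref{thm:TA-abstract} in $L^q(\omega)$ bounds these by terms indexed in $\base{(A - b - \vbeta)} + b$, and the union over $b$ is what produces $K_\vbeta$. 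The $\ho^{-\vec1/q}$ combined with $\he = \ha \ge \ho$ is exactly what $\vgamma = -\vec1/q$ accommodates via the $\ho^\vgamma/\he^\vgamma$ prefactor. For \eqref{eq:TA-approx-reduced} use \eqref{eq:TA-cont-reduced} instead: the derivatives now fall on $\psi_\omega$ with $\|\partial^\valpha \psi_\omega\|_{q',\omega} \sim \ho^{-\valpha - \vec1/q}$ and $\valpha$ ranging up to $\vmax A$, which is precisely what $\vgamma = -\vmax A - \vec1/q$ encodes; the remaining factor $\|f - T_{A, \psi_{\zeta_\phi}}f\|_{q,\omega}$ requires only a single application of Proposition~\ref{thm:TA-abstract}, so $K_\vbeta$ collapses to $\base{(A-\vbeta)}$. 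The hardest part will be bookkeeping the correction piece, particularly tracking the mixed scales $\ho,\ha$ so that the resulting $\he$ and $\vpq$ powers assemble cleanly into the \ref{H:approx} template.

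For the explicit constant \eqref{eq:TA-approx-explicit}, the hypotheses $\Cconcave = 1$ and $d - |\vsigma| \ge n$ let me take $\zeta_\phi = \esupp_\phi$ (itself a box) and invoke Proposition~\ref{thm:TA-explicit} in place of Proposition~\ref{thm:TA-abstract} in the centered piece, producing the explicit factor $(d + 1 - |\vsigma|)\,n^{(d-1)/2}/\valpha!$. Summing over multi-indices with $|\valpha| = d + 1 - |\vsigma|$ via the crude bound $\sum_{|\valpha| = r} 1/\valpha! \le n^r/r!$, treating the correction piece analogously, and absorbing a factor of $2$ from the triangle inequality then yields $\Capprox = 2(2 n^{3/2})^{d + 1}/(d - |\vsigma|)!$ after collecting the powers of $n$.
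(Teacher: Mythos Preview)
Your proposal is correct and mirrors the paper's proof: the same decomposition via the projection property (Proposition~\ref{thm:TA-projector}), the same split between \eqref{eq:TA-cont-full} and \eqref{eq:TA-cont-reduced} in Lemma~\ref{thm:TA-cont} to produce the two $(K_\vbeta,\vgamma)$ pairs, and the same substitution of Proposition~\ref{thm:TA-explicit} for Proposition~\ref{thm:TA-abstract} to extract \eqref{eq:TA-approx-explicit}. One small correction: your ``second application of Proposition~\ref{thm:TA-abstract} in $L^q(\omega)$'' cannot be done on $\omega$, since $\omega$ is generally not star-shaped with respect to $\zeta_\phi$; bound $\|\cdot\|_{q,\omega}\le\|\cdot\|_{q,\esupp_\phi}$ first and then apply Proposition~\ref{thm:TA-abstract} on $\esupp_\phi$, exactly as the paper does in passing to \eqref{eq:TA-approx-1}.
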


\begin{proof}
We first consider the case $\vbeta=\vec0$. For each $\omega\in\mesh$, we need to consider the approximation for both $\eta=\omega$ and $\eta=\esupp_\phi$, $\phi\in\actives_\omega$.
If $\eta=\omega$ then \ref{H:approx} follows from equation~\eqref{eq:TA-abstract} in Proposition~\ref{thm:TA-abstract}.

Suppose $\eta=\esupp_\phi$. By Proposition~\ref{thm:TA-projector}, $T_{A, \psi_\omega}$ is a projector on $\poly_A$ and for all $g$ we have
\begin{equation}\label{eq:TA-approx-0}
f-T_{A, \psi_\omega} f=(f-g) + T_{A, \psi_\omega} (f-g).
\end{equation}
By \eqref{eq:TA-cont-full} and $\norm{\psi_\omega}_{q'}=\ho^{-\vec1/q}\cnorm{\hat\psi}_{q'}$ we deduce
\begin{equation}\label{eq:TA-approx-1}
\norm{f-T_{A, \psi_\omega} f}_{p,\eta}\le\norm{f-g}_{p,\eta}+ \cnorm{\hat\psi}_{q'}\frac{\he^{\vec1/q}}{\ho^{\vec1/q}}\sum_{{\vb}\in A}\frac{\he^{{\vb}+\vpq}}{{\vb}!}\bnorm{\partial^{\vb} (f-g)}_{q, \eta}.
\end{equation}
We fix $g:=T_{A, \psi_\zeta} f\in\poly_{A}$ where $\zeta\subseteq\eta$ is a box satisfying \ref{H:concave}, use Proposition~\ref{thm:TA-abstract}, and obtain 
$$
\begin{aligned}
\norm{f-T_{A, \psi_\omega} f}_{p,\eta}\le& \sum_{\valpha\in \base{A}} C_{\valpha,p}\he^{\valpha+\vpq}\bnorm{\partial^{\valpha}f}_{q,\eta}
\\&+\cnorm{\hat\psi}_{q'}\frac{\ho^{-\vec1/q}}{\he^{-\vec1/q}}\sum_{\vb\in A}\sum_{\valpha\in\base{(A-\vb)}}C_{\valpha,q}\frac{\he^{\vb+\valpha+\vpq}}{\vb!}\bnorm{\partial^{\valpha+\vb}f}_{q,\eta}
\\\le& \Capprox \frac{\ho^{-\vec1/q}}{\he^{-\vec1/q}} \sum_{\vk\in K_{\vec0}} \he^{\vk+\vpq} \bnorm{\partial^{\vk}f}_{q,\eta},
\end{aligned}
$$
where we used $\ho^{-\vec1/q}\he^{\vec1/q}\ge1$, $\vbeta=\vec0$, $K_{\vec0}$ as in \eqref{eq:TA-approx-full} and $\Capprox$ equals the maximum of the $C_{\valpha,p}$, $\valpha\in \base{A}$ and $\cnorm{\hat\psi}_{q'}C_{\valpha,q}(\vb!)^{-1}$, $\vb\in A$, $\valpha\in \base{(A-\vb)}$ times the maximum number of repetitions of a derivative of $f$.

To obtain \eqref{eq:TA-approx-explicit} we use Proposition~\ref{thm:TA-explicit} instead of Proposition~\ref{thm:TA-abstract}.
We notice that $\cnorm{\hat\psi}_{q'}=1$ and re-index the doubles sum on $\vc=\valpha+\vb$ as in
\begin{equation}\label{eq:TA-approx-3}
\begin{aligned}
&\sum_{\abs{\vb}\le d}\quad\sum_{\abs{\valpha}=d+1-\abs{\vb}} \frac {(d+1-\abs{\vb})n^{\frac {d-1}2}}{\valpha!\ \vb!} \he^{\vb+\valpha+\vpq} \bnorm{\partial^{\valpha+\vb}f}_{q,\eta}
\\&\qquad=\sum_{\abs{\vc}=d+1}\sum_{\vb\le\vc}\frac {(d+1-\abs{\vb})n^{\frac {d-1}2}}{(\vc-\vb)!\ \vb!} \he^{\vc+\vpq} \norm{\partial^{\vc}f}_{q,\eta}
\\&\qquad\le \sum_{\abs{\vc}=d+1} \frac {(d+1)2^{d+1} n^{\frac {d-1}2}}{\vc!} \he^{\vc+\vpq} \norm{\partial^{\vc}f}_{q,\eta},
\end{aligned}
\end{equation}
where we used 
$$
\sum_{\vb\le \vc} \frac{\vc!}{(\vc-\vb)!\vb!}=\sum_{\vb\le \vc}\prod_{i=1}^n\binom{c_i}{b_i}=\prod_{i=1}^n\sum_{b_i\le c_i}\binom{c_i}{b_i}= 2^{\abs{\vc}}=2^{d+1}.
$$
The bounds in Proposition~\ref{thm:TA-explicit} for $\norm{f-g}_{p,\eta}$ and $\norm{f-g}_{q,\eta}$ are equal. 
Since the latter appears in the sum in \eqref{eq:TA-approx-1} for $\vb=0$ then we can bound $\norm{f-T_{A,\psi_\omega}f}_{p,\eta}$ with 2 times the bound in \eqref{eq:TA-approx-3}. Using also that $(\vc !)^{-1}\le n^{d+1} (d+1)!^{-1}$ we obtain \eqref{eq:TA-approx-explicit} for $\vsigma=\vec0$.

To obtain \eqref{eq:TA-approx-reduced} we start from \eqref{eq:TA-cont-reduced}, 
set $g=T_{A, \psi_\zeta} f\in\poly_{A}$ where $\zeta\subseteq\eta$ is a box satisfying \ref{H:concave}, use $\cnorm{\partial^{\valpha}\psi_\omega}_{q'}=\ho^{-\vec1/q}\ho^{-\valpha}\cnorm{\partial^{\valpha}\hat\psi}_{q'}$, Proposition~\ref{thm:TA-abstract} and a scaling argument and obtain
\begin{equation}\label{eq:TA-approx-2}
	\begin{aligned}
\bnorm{f-T_{A, \psi_\omega} f}_{p,\eta}&\le
\Big(\sum_{\valpha\in A} \frac{\abs{C_{\valpha,A}}}{\valpha!}\frac{\ho^{-\valpha-\vec1/q}}{\he^{-\valpha-\vec1/q}}\he^{\vpq}\cnorm{\partial^{\valpha}\hat\psi}_{q'} \Big) \bnorm{f-g}_{q,\eta}
\\&\le \Capprox \frac{\ho^{-\maxv A-\vec1/q}}{\he^{-\maxv A-\vec1/q}} \sum_{\valpha\in \base{A}} \he^{\valpha+\vpq} \bnorm{\partial^{\valpha}f}_{q,\eta}
\end{aligned}\end{equation}
where $\Capprox=(\sum_{\valpha\in A}  \abs{C_{\valpha,A}} \valpha!^{-1}\cnorm{\partial^{\valpha}\hat\psi}_{q'})\max_{\valpha\in \base{A}}C_{\valpha}$.

The case $\vbeta=\vsigma\ne\vec0$ follows from Proposition~\ref{thm:TA-commutes}.
\qed\end{proof}

\section{Sobolev and reduced seminorms}\label{sec:estimates}

We define two families of operators $\paop^S_\omega$ and $\paop^R_\omega$ of the form $T_{A,\psi_\omega}$ from which error bounds are derived in term of Sobolev and reduced seminorms, respectively.
For each case we summarize the assumptions required by Theorems~\ref{thm:global-ani}, \ref{thm:global-iso} and \ref{thm:global-inf}.

The family $\paop^S_\omega$ is defined by $A=\{{\valpha}\in\N^n:\abs{\valpha}\le d\}$, $\psi_\omega=\measure{\omega}^{-1}\ind_{\omega}$ and satisfy \ref{H:approx} according to \eqref{eq:TA-approx-full} with
\begin{equation}\label{eq:K-sobolev}
\vgamma=-\vec1/q,\ \  K_{\vec0}=\{\vk :\, \abs{\vk}= d+1\},\ \  K_{\vsigma}=\{\vk :\, {\vsigma}\le \vk\text{ and }\abs{\vk}= d+1\}.
\end{equation}
The family $\paop_\omega^R$ is defined by $A=\{\valpha \in\N^n: \valpha\le\vd\}$, $\psi_\omega$ as in \eqref{eq:TA-general} for a fixed  $\hat\psi\in\smooth^{\vd}_0([\vec0,\vec1])$ and satisfy \ref{H:approx} according to \eqref{eq:TA-approx-reduced} with
\begin{equation}\label{eq:K-reduced}
\begin{aligned}
	&\vgamma=-\vd-\vec1/q,\\
&K_{\vec0}=\{(0, \dots, d_i+1, \dots, 0):\, i=1, \dots, n\},
\\&K_{\vsigma}=\{(\sigma_1, \dots, d_i+1, \dots, \sigma_n):\, i=1, \dots, n\}.
\end{aligned}\end{equation}
A graphical representation of the index sets for $\paop^S_\omega$ and $\paop^R_\omega$ is shown in Fig.~\ref{fig:a-k-pairs-sobolev} and \ref{fig:a-k-pairs-reduced}.
Remembering Propositions~\ref{thm:mesh-assumption}, \ref{H:mesh} is implied by different mesh properties depending on whether $p\le q$ or $p>q$.
This leads to the following theorem.

\begin{theorem}\label{thm:sobolev}
Suppose that \ref{H:polynomial}, \ref{H:lambda}, \ref{H:phi}, \ref{H:esupp}, \ref{H:duals}, \ref{H:concave} hold and $(1/q,1/p)\in R_{d+1-\abs{\vsigma}}$ for $\paop^S_\omega$ and in $R_{\min (\vd-{\vsigma})+1}$ for $\paop^R_\omega$.
Then \eqref{eq:global-ani}, \eqref{eq:global-iso}, \eqref{eq:global-inf-ani} and \eqref{eq:global-inf-iso}  hold under the additional assumptions listed in the following table.
\begin{center}
\newdimen\LS
\LS=1.1ex
\def\level#1{\hspace{#1\LS}}
\setlength\tabcolsep{0.77ex}
\noindent\begin{tabular}{|l|ll|l|l|ll|l|}
\multicolumn{2}{c}{} &\multicolumn{3}{c}{$\paop^S_\omega$} &&\multicolumn{2}{c}{$\paop^R_\omega$}\\
\cline{3-5}\cline{7-8}
\multicolumn{2}{c}{} &\multicolumn{3}{|c|}{$K_{\vec0}$, $K_{\vsigma}$ from \eqref{eq:K-sobolev}} &&\multicolumn{2}{|c|}{$K_{\vec0}$, $K_{\vsigma}$ from \eqref{eq:K-reduced}}\\
\multicolumn{2}{c|}{} &$p\le q<\!\infty$& $p\le q=\!\infty$ & $q<p\phantom{<\infty}$ &&\multicolumn{1}{|l|}{$p\le q\phantom{<\infty} $} & $q<p\phantom{<\infty}$\\
\cline{3-5}\cline{7-8}
\multicolumn{2}{c}{error bound} &\multicolumn{3}{c}{assumptions} &&\multicolumn{2}{c}{assumptions}\\
\cline{1-1}\cline{3-5}\cline{7-8}
\eqref{eq:global-ani} && \multicolumn{1}{|l|}{\ref{H:mesh-num} }& &\notableentry&&\multicolumn{1}{|l|}{\ref{H:mesh-length}} &\notableentry\\
\eqref{eq:global-inf-ani} &&\multicolumn{1}{|l|}{\!impossible\!}& &\notableentry&&\multicolumn{1}{|l|}{\ref{H:mesh-length}} &\notableentry\\
\eqref{eq:global-iso}&&\multicolumn{1}{|l|}{ \ref{H:mesh-num}, \ref{H:isotropicA}}& \ref{H:isotropicA} & \ref{H:mesh-length}, \ref{H:isotropicA}
	&&\multicolumn{1}{|l|}{\ref{H:mesh-length}, \ref{H:isotropicA}} &  \ref{H:mesh-length}, \ref{H:isotropicA}\\
\eqref{eq:global-iso}, $\vsigma=\vec0$ && \multicolumn{1}{|l|}{\ref{H:mesh-num}}&& \ref{H:mesh-length}&&\multicolumn{1}{|l|}{\ref{H:mesh-length}}&\ref{H:mesh-length}\\
\eqref{eq:global-inf-iso} && \multicolumn{1}{|l|}{\ref{H:mesh-num}, \ref{H:isotropicA}}& \ref{H:isotropicA} & \ref{H:mesh-length}, \ref{H:isotropicA}
	&&\multicolumn{1}{|l|}{\ref{H:mesh-length}, \ref{H:isotropicA}} &  \ref{H:mesh-length}, \ref{H:isotropicA}\\
\eqref{eq:global-inf-iso}, $\vsigma=\vec0$ && \multicolumn{1}{|l|}{\ref{H:mesh-num}}&& \ref{H:mesh-length}&&\multicolumn{1}{|l|}{\ref{H:mesh-length}}&\ref{H:mesh-length}\\
\cline{1-1}\cline{3-5}\cline{7-8}
\multicolumn{2}{c}{constant} &\multicolumn{3}{c}{substitutions} &&\multicolumn{2}{c}{substitutions}\\
\cline{1-1}\cline{3-5}\cline{7-8}
$\Cmesh$ && \multicolumn{1}{|l|}{$\Cnum^{\frac1q}$} &$1$ &$\Clength^{n \pq_-+\frac np}$&&\multicolumn{1}{|l|}{$\Clength^{\abs{\vd}+n}$} & $\Clength^{\abs{\vd}+2n}$\\
 $\CisotropicO$ && \multicolumn{1}{|l|}{$1$ }& $1$ & $\Clength\CisotropicA$&& \multicolumn{1}{|l|}{$1$ }& $\Clength\CisotropicA$\\
$\Capprox$ &&\multicolumn{3}{|l|}{if $d\!-\!\abs\vsigma\!\ge\!n$ and $\Cconcave\!=\!1$ see \eqref{eq:TA-approx-explicit}} &&\multicolumn{1}{|l|}{}&\\
\cline{1-1}\cline{3-5}\cline{7-8}
\end{tabular}\end{center}
Empty cells mean the estimate holds without additional assumptions. A stroked cell means that the estimate does not apply and one should refer to Theorem~\ref{thm:global}.The table also lists possible substitutions for $\Cmesh$, $\CisotropicO$ and $\Capprox$.
\end{theorem}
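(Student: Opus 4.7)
The statement is a tabulation that specialises the abstract estimates of Section~\ref{sec:framework} to the two concrete families $\paop^S_\omega$ and $\paop^R_\omega$. Accordingly my plan is to verify, for each family, the abstract hypotheses \ref{H:approx} and \ref{H:mesh} under the respective conditions of the table, and then to tick off each row by applying the appropriate global theorem or corollary.

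The first ingredient, \ref{H:approx}, is read off from Theorem~\ref{thm:TA-approx}. For $\paop^S_\omega$ the index set $A=\{\valpha:\abs\valpha\le d\}$ together with \eqref{eq:TA-approx-full} produces the triple $(K_{\vec0},K_\vsigma,\vgamma)$ of \eqref{eq:K-sobolev}, and the integrability hypothesis $(1/q,1/p)\in R_{d+1-\abs\vsigma}$ is exactly what Proposition~\ref{thm:TA-abstract} requires at every $\valpha\in\base{(A-\vsigma)}$. In the same way, for $\paop^R_\omega$ the set $A=\{\valpha:\valpha\le\vd\}$ and \eqref{eq:TA-approx-reduced} yield \eqref{eq:K-reduced} under the analogous condition $(1/q,1/p)\in R_{\min(\vd-\vsigma)+1}$. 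The explicit bound \eqref{eq:TA-approx-explicit} supplies the $\Capprox$ in the last row of the table whenever $\Cconcave=1$ and $d-\abs\vsigma\ge n$.

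The second ingredient, \ref{H:mesh}, is obtained from Proposition~\ref{thm:mesh-assumption} applied to the $\vgamma$ identified above. For $\paop^S_\omega$ the exponent $\vgamma=-\vec1/q$ lies in $[-\vec1/p,\vec0]$ exactly when $p\le q$, so in that range \ref{H:mesh-num} suffices, producing $\Cmesh=1$ for $q=\infty$ and $\Cmesh=\Cnum^{1/q}$ otherwise; for $p>q$ one must fall back on \ref{H:mesh-length}, yielding the listed $\Clength$ power. For $\paop^R_\omega$ the exponent $\vgamma=-\vd-\vec1/q$ always drops below $-\vec1/p$, so only \ref{H:mesh-length} is admissible, and inserting into the third row of the proposition produces the claimed $\Clength^{\abs\vd+n}$ or $\Clength^{\abs\vd+2n}$.

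With both abstract hypotheses available, it remains to consult each row of the table by invoking Corollary~\ref{thm:global-ani}, Corollary~\ref{thm:global-iso}, or Theorem~\ref{thm:global-inf}. These results themselves prescribe when \ref{H:isotropicO} and \ref{H:isotropicA} can be dispensed with: \ref{H:isotropicO} is required only when $p>q$ or when $q<\infty$ with $\vsigma\ne\vec0$, and \ref{H:isotropicA} only when $\vsigma\ne\vec0$ or $p>q$. Whenever the table assumes only \ref{H:isotropicA} but the abstract statement formally needs \ref{H:isotropicO}, I would appeal to Proposition~\ref{thm:shape-regularity}: under \ref{H:mesh-length} the two are equivalent with $\CisotropicO\le\Clength\CisotropicA$, which also explains the substitution column. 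The struck cells in the $q<p$ column for the anisotropic rows reflect that Corollary~\ref{thm:global-ani} and the first branch of Theorem~\ref{thm:global-inf} are available only when $p\le q$ (and with $q=\infty$, $\vsigma=\vec0$ in the latter case), while the entry \emph{impossible} in the $p\le q<\infty$ Sobolev column for \eqref{eq:global-inf-ani} records the incompatibility of $p=\infty$ with $q<\infty$ under $p\le q$. The only genuinely non-automatic task is the bookkeeping of the exponents of $\Cnum$, $\Clength$, $\CisotropicO$ and $\Cesupp$ inside the final constant, and I expect this rather than any conceptual step to be the main obstacle.
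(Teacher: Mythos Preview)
Your proposal is correct and mirrors the paper's own treatment: the paper gives no explicit proof of this theorem because it is a tabulation of Corollaries~\ref{thm:global-ani}, \ref{thm:global-iso} and Theorem~\ref{thm:global-inf} once \ref{H:approx} is supplied by Theorem~\ref{thm:TA-approx} and \ref{H:mesh} by Proposition~\ref{thm:mesh-assumption}, with Proposition~\ref{thm:shape-regularity} providing the $\CisotropicO\le\Clength\CisotropicA$ substitution. You have identified every ingredient, including the reason for the struck cells and the ``impossible'' entry, and your closing remark that the only real work is the bookkeeping of exponents is accurate.
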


\begin{example}
For $\vsigma=\vec0$ and $p = q = 2$ one obtains \eqref{eq:global-iso} for $\paop^S_\omega$ from \ref{H:polynomial}, \ref{H:lambda}, \ref{H:phi}, \ref{H:esupp}, \ref{H:duals}, \ref{H:concave} plus \ref{H:mesh-num} with the substitution from the same column in $\Ctot$, i.e., $\Cmesh=\Cnum^{1/2}$, $\CisotropicO=1$ and $\cardinality{K_{\vec0}}=\binom{d+n-1}{n-1}$ giving
\begin{align*}
\Ctot = 4\binom{d+n-1}{n-1}^{\frac12}\Clambda\Cphi\Capprox\Cesupp^{\frac n2}\Cduals\Cnum^{\frac12}.
\end{align*}
\end{example}

\begin{remark}
For $p>q$ element shape regularity, i.e.\ \ref{H:isotropicO}, is required indirectly as it is implied by the assumptions \ref{H:mesh-length} and \ref{H:isotropicA}. For $p\le q$ element shape regularity is not required and shape regularity of the $\esupp_\phi$'s is sufficient. Finally for $q=\infty$ and $\vsigma=\vec0$ no shape regularity assumption is required.
\end{remark}

\begin{figure} 
\begin{center}
\begin{tikzpicture}[xscale=.32, yscale=.32]
\def\NN{\draw[gray, ->] (0, 0)--(10, 0);\draw[gray, ->] (0, 0)--(0, 8);\foreach\x in {0, ..., 9}{\foreach\y in {0, ..., 7} {\fill[gray] (\x, \y) circle (.1cm);}}}
\begin{scope}[xshift=0cm]
\node[anchor=north west] at (0,-.5) {$\begin{aligned}&A=\{\vk:\abs{\vk}\le6\}\\&K_{\vec0}=\{\vk:\abs{\vk}=7\}\end{aligned}$};
\NN
\def\d{6}\def\dp{7}
\foreach\x [evaluate=\x as \yy using \d-\x] in {0, ..., \d} {\foreach\y in {0, ..., \yy} {\draw[fill=white] (\x, \y) circle (.25cm);}}
\foreach\x [evaluate=\x as \y using \dp-\x] in {0, ..., \dp}{\fill[black] (\x, \y) circle (.25cm);}
\draw[fill=red] (0, 0) circle (.25cm);
\end{scope}
\begin{scope}[xshift=18cm, yshift=0cm]
\node[anchor=north west] at (0,-.5) {$\begin{aligned}&A=\{k:\abs{\vk}\le 6\}\\ &K_{(2,3)}=\{(2,5),\,(3,4),\,(4,3)\}\end{aligned}$ };
\NN
\def\d{6}\def\dp{7}
\def\sx{2}\def\sy{3}
\pgfmathsetmacro\xl{\dp-\sy}
\foreach\x [evaluate=\x as \yy using \d-\x] in {0, ..., \d} {\foreach\y in {0, ..., \yy} {\draw[fill=white] (\x, \y) circle (.25cm);}}
\foreach\x [evaluate=\x as \y using \dp-\x] in {\sx, ..., \xl}{\fill[black] (\x, \y) circle (.25cm);}
\draw[fill=red] (\sx, \sy) circle (.25cm);

\end{scope}
	\pgfresetboundingbox 
	\path[use as bounding box] (-.25,-2.5) rectangle (28,8);
\end{tikzpicture}
\end{center}
\caption[Considered pairs of index sets $A$ (circles) and $K$ discs in $2$ dimensions.]
{Some examples of $A$(\tikz\draw[fill=white] (.03725, .03725) circle (.075cm);) and $K_{\vbeta}$(\tikz\draw[fill=black] (.03725, .03725) circle (.075cm);) for Sobolev seminorms and given ${\vbeta}$(\tikz\draw[fill=red] (.03725, .03725) circle (.075cm);).}\label{fig:a-k-pairs-sobolev}
\end{figure}
\begin{figure}
	\begin{center}
	\begin{tikzpicture}[xscale=.32, yscale=.32]
\def\NN{\draw[gray, ->] (0, 0)--(10, 0);\draw[gray, ->] (0, 0)--(0, 8);\foreach\x in {0, ..., 9}{\foreach\y in {0, ..., 7} {\fill[gray] (\x, \y) circle (.1cm);}}}
	\begin{scope}[xshift=0cm]
	\NN
	\def\dx{4}\def\dy{5}
	\def\dxp{5}\def\dyp{6}
	\foreach\x in {0, ..., \dx} {\foreach\y in {0, ..., \dy} {\draw[fill=white] (\x, \y) circle (.25cm);}}
	\fill[black] (\dxp, 0) circle (.25cm);\fill[black] (0, \dyp) circle (.25cm);
	\draw[fill=red] (0, 0) circle (.25cm);
\node[anchor=north west] at (0,-.5) {$\begin{aligned}&A=\{k\le (4, 5)\} \\ &K_{\vec0}=\{(0,6),\,(5,0)\}\end{aligned}$ };
	\end{scope}
	\begin{scope}[xshift=18cm, yshift=-0cm]
	\NN
	\def\dx{4}\def\dy{5}
	\def\dxp{5}\def\dyp{6}
	\foreach\x in {0, ..., \dx} {\foreach\y in {0, ..., \dy} {\draw[fill=white] (\x, \y) circle (.25cm);}}
	\fill[black] (\dxp, 4) circle (.25cm);\fill[black] (3, \dyp) circle (.25cm);
	\draw[fill=red] (3, 4) circle (.25cm);
\node[anchor=north west] at (0,-.5) {$\begin{aligned}&A=\{k\le (4, 5)\}\\ &K_{(3,4)}=\{(3,6),\,(5,4)\}\end{aligned}$ };
	\end{scope}
	\pgfresetboundingbox 
	\path[use as bounding box] (-.25,-2.5) rectangle (28,8);
	\end{tikzpicture}
\end{center}
\caption[Considered pairs of index sets $A$ (circles) and $K$ discs in $2$ dimensions.]
{Examples of $A$(\tikz\draw[fill=white] (.03725, .03725) circle (.075cm);) and $K_{\vbeta}$(\tikz\draw[fill=black] (.03725, .03725) circle (.075cm);) for reduced seminorms and given ${\vbeta}$(\tikz\draw[fill=red] (.03725, .03725) circle (.075cm);).}\label{fig:a-k-pairs-reduced}
\end{figure}

\section{Analytic functions and exponential convergence}
\label{sec:EXP}

In this section we consider a sequence of spline spaces $\{\spline_d=\SPAN\Phi_d\}_{d \ge \abs{\vsigma}+n}$ of  degree $\vd= (d, \dots, d)$ and defined on the same domain $\Omega$.
To each space we associate a corresponding operator $\aop_d$ and we study the behavior of $\norm{\partial^{\vsigma}(f-\aop_d f)}_p$ as a function of $d$.
We start by recalling some basic properties of analytic functions.

\subsection{Some properties of analytic functions}

For more  on the following material, see for example \cite{MR847923}.

\begin{definition}\label{def:analytic}
A function $f:U\subseteq\C^n\to\C$ or $f:U\subseteq\R^n\to\R$ is \emph{analytic} at $\vx$ if there is an open neighbourhood $U_x$ of $\vx$ where the sequence
$$ T_{A_d, \delta_x}f(\vy):=\sum_{{\valpha}\le d}\frac{(\vy-\vx)^{\valpha}}{{\valpha}!}\partial^{\valpha} f (\vx)$$
converges uniformly to $f$ as $d\to\infty$. A function is said to be analytic on $U$ if it is analytic at $\vx$ for all $\vx\in U$.
\end{definition}

If $f:U\subset\C\to\C$ is analytic on $ D_{x, r}:=\{z\in\C:\abs{x-z}\le r\}$ then the Cauchy formula states
$$
f(x)=\frac1{2\pi i}\int_{\boundary D_{x, r}}\frac{f( z)}{z-x}\, dz.
$$
This has a multivariate analogue \cite[Theorem1.3]{MR847923} that follows by applying the above formula to each coordinate.
Given $\vx\in\C^n$ and $\vec0<\vr\in\R^n$ let
\begin{align*}
D_{\vx, \vr}&:=D_{x_1, r_1}\times\dots\times D_{x_n, r_n},&&&
\tilde\partial D_{\vx, \vr} &:=\boundary D_{x_1, r_1}\times\dots\times\boundary D_{x_n, r_n}.
\end{align*}
For $f:U\subset\C^n\to\C$ analytic on $D_{\vx, \vr}$ we have
\begin{equation}\label{eq:EXP-cauchy}
f(\vx)=(2\pi i)^{-n}\int_{\tilde\partial D_{\vx, \vr}}\frac {f(\vz)}{(\vz-\vx)^{\vec1}}\, d\vz.
\end{equation}
The Cauchy formula implies the following proposition, see also \cite[Theorem~1.6]{MR847923}.

\begin{proposition}\label{thm:EXP-derivatives}
Let $\Omega\subset \C^n$ be any set. If $f$ is analytic on $\Delta\subseteq\C^n$ containing $\Omega$ and 
\begin{equation}\label{eq:EXP-derivatives-distance}
\vr(\vx) :=\maxv\{\vw: D_{\vx, \vw}\subseteq\Delta\}>\vec0,\qquad\forall \vx \in \Omega
\end{equation}
then for all $\vx\in\Omega$
$$
\abs{\partial^{\valpha} f(\vx)}\le\frac{{\valpha}!}{\vr(\vx)^{{\valpha}}}\norm{f}_{\infty, \Delta}.
$$
\end{proposition}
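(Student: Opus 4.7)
The plan is to extract $\partial^{\valpha}f(\vx)$ from the Cauchy integral representation \eqref{eq:EXP-cauchy} by differentiating under the integral sign, and then to estimate the resulting integral by taking absolute values on the distinguished boundary where the denominator has constant modulus.

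First, I would fix an arbitrary $\vx\in\Omega$ and set $\vr:=\vr(\vx)$. By the definition \eqref{eq:EXP-derivatives-distance} we have $D_{\vx,\vr}\subseteq\Delta$, so $f$ is analytic on $D_{\vx,\vr}$ and \eqref{eq:EXP-cauchy} applies. Next I would differentiate the integrand with respect to $\vx$ under the integral. Since the distinguished boundary $\tilde\partial D_{\vx,\vr}$ is bounded away from $\vx$ (indeed $\abs{z_i-x_i}=r_i>0$ on it), the integrand $(\vz-\vx)^{-\vec1}f(\vz)$ and each of its partial derivatives in $\vx$ are continuous and uniformly bounded on $\tilde\partial D_{\vx,\vr}$, which justifies differentiation under the integral. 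Applying $\partial_{x_i}^{\alpha_i}$ to $(z_i-x_i)^{-1}$ produces $\alpha_i!\,(z_i-x_i)^{-(\alpha_i+1)}$, and taking the product over $i$ gives
$$
\partial^{\valpha} f(\vx)=(2\pi i)^{-n}\int_{\tilde\partial D_{\vx,\vr}}\frac{\valpha!\,f(\vz)}{(\vz-\vx)^{\valpha+\vec1}}\,d\vz.
$$

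Finally, I would pass to absolute values. On $\tilde\partial D_{\vx,\vr}$ the product $\abs{(\vz-\vx)^{\valpha+\vec1}}$ equals $\vr^{\valpha+\vec1}$ identically, and $\abs{f(\vz)}\le\norm{f}_{\infty,\Delta}$ everywhere on $\Delta$. The distinguished boundary is parametrised by $n$ circles of radii $r_1,\dots,r_n$, so $\abs{d\vz}$ integrates to $(2\pi)^n\vr^{\vec1}$. Combining these estimates yields
$$
\abs{\partial^{\valpha} f(\vx)}\le(2\pi)^{-n}\frac{\valpha!\,\norm{f}_{\infty,\Delta}}{\vr^{\valpha+\vec1}}(2\pi)^n\vr^{\vec1}=\frac{\valpha!}{\vr(\vx)^{\valpha}}\norm{f}_{\infty,\Delta},
$$
which is the claimed bound.

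There is no serious obstacle here: the argument is the standard multivariate Cauchy estimate, and the only subtlety is the justification of interchanging derivatives and integration, which follows from uniform boundedness on the compact set $\tilde\partial D_{\vx,\vr}$ together with analyticity of $f$ on a neighbourhood of this set. The only care needed is to choose the polyradius $\vr(\vx)$ optimally so that $D_{\vx,\vr(\vx)}$ fits inside $\Delta$, which is precisely what \eqref{eq:EXP-derivatives-distance} guarantees.
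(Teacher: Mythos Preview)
Your proof is correct and follows essentially the same route as the paper: differentiate the multivariate Cauchy formula \eqref{eq:EXP-cauchy} under the integral sign and then estimate the resulting integral over the distinguished boundary using $\abs{z_i-x_i}=r_i$. You are in fact slightly more explicit than the paper in justifying the interchange of differentiation and integration and in evaluating the boundary integral.
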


\begin{proof}
Differentiating the Cauchy formula ${\valpha}$ times with respect to $\vx$ we have
$$
\begin{aligned}
\partial^{\valpha} f(\vx)&=\partial^{\valpha} (2\pi i)^{-n}\int_{\tilde\partial D_{\vx, \vr(\vx)}}\frac {f(\vz)}{(\vz-\vx)^{\vec1}}\, d\vz = (2\pi i)^{-n} {\valpha}!\int_{\tilde\partial D_{\vx, \vr(\vx)}}\frac {f(\vz)}{(\vz-\vx)^{{\valpha}+\vec1}}\, d\vz
\end{aligned}
$$
and consequently
$$
\abs{\partial^{\valpha} f(\vx)}\le (2\pi)^{-n} {\valpha}!\norm{f}_{\infty, \Delta}\int_{\tilde\partial D_{\vx, \vr(\vx)}}\babs{(\vz-\vx)^{-\valpha-\vec1}}\, d\vz.
$$
Since the integral equals $\vr(\vx)^{-{\valpha}} (2\pi)^{n}$, the result follows.
\qed\end{proof}

Proposition~\ref{thm:EXP-derivatives} does not apply to real analytic function $U\subset\R^n\to\R$ and we use the following proposition:

\begin{proposition}\label{thm:EXP-extension}
All real analytic functions $f:\Omega\subset\R^n\to\R$ admit an analytic extension to a  neighbourhood $\Delta\subset\C^n$ containing $\Omega$ and such that $\vr(\vx)$ defined in \eqref{eq:EXP-derivatives-distance} is strictly positive on $\Omega$.
\end{proposition}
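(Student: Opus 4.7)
The idea is to construct the complex extension locally, point by point, by reinterpreting the real Taylor series at each base point as a holomorphic power series on a complex polydisc, and then to glue these local pieces together via the identity theorem.

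First I would fix $\vx_0\in\Omega$ and apply Definition~\ref{def:analytic} to obtain an open real neighbourhood $U_{\vx_0}$ on which $T_{A_d,\delta_{\vx_0}}f$ converges uniformly to $f$. Picking any $\vy\in\R^n$ with strictly positive coordinates and with $\vx_0+\vy\in U_{\vx_0}$, convergence of the series at $\vx_0+\vy$ forces the general term to vanish, so
\[
M(\vx_0):=\sup_{\valpha\in\N^n}\Babs{\frac{\vy^{\valpha}}{\valpha!}\partial^{\valpha}f(\vx_0)}<\infty.
\]
By the multivariate Abel lemma, for any $\vz\in\C^n$ with $\abs{z_i-x_{0,i}}<y_i$ componentwise, the formal series
\[
F_{\vx_0}(\vz):=\sum_{\valpha\in\N^n}\frac{(\vz-\vx_0)^{\valpha}}{\valpha!}\partial^{\valpha}f(\vx_0)
\]
is dominated term-by-term by a convergent product of geometric series. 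Hence $F_{\vx_0}$ converges absolutely and locally uniformly on the open polydisc $D_{\vx_0,\vr_0(\vx_0)}$, where $\vr_0(\vx_0):=\vy/2$, so $F_{\vx_0}$ is holomorphic there and agrees with $f$ on the real trace $D_{\vx_0,\vr_0(\vx_0)}\cap\R^n$.

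Next I would glue. Given two base points $\vx_0,\vx_1\in\Omega$ whose polydiscs overlap, the intersection $D_{\vx_0,\vr_0(\vx_0)}\cap D_{\vx_1,\vr_0(\vx_1)}$ is a connected open subset of $\C^n$ that meets $\R^n$ along a non-empty real open set on which both $F_{\vx_0}$ and $F_{\vx_1}$ equal $f$. The identity theorem for holomorphic functions of several complex variables then forces $F_{\vx_0}=F_{\vx_1}$ throughout the overlap. Setting
\[
\Delta:=\bigcup_{\vx\in\Omega}D_{\vx,\vr_0(\vx)},\qquad F(\vz):=F_{\vx}(\vz)\text{ for }\vz\in D_{\vx,\vr_0(\vx)},
\]
produces a well-defined holomorphic extension $F:\Delta\to\C$ of $f$. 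By construction $D_{\vx,\vr_0(\vx)}\subseteq\Delta$ for every $\vx\in\Omega$, hence the vector $\vr(\vx)$ in \eqref{eq:EXP-derivatives-distance} satisfies $\vr(\vx)\ge\vr_0(\vx)>\vec0$, which is the required pointwise strict positivity.

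The main obstacle is the Abel-lemma step: one must promote uniform convergence of the real Taylor partial sums on a real neighbourhood to absolute, locally uniform convergence of the \emph{formal} complex series on a polydisc of positive polyradius. Once the bound $M(\vx_0)<\infty$ is in hand, the remaining items---the holomorphy of each $F_{\vx_0}$, the identity-theorem gluing, and the positivity of $\vr$---follow routinely.
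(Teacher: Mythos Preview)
Your approach is essentially the paper's---build local holomorphic extensions from the real Taylor series and take the union of polydiscs---and you are in fact more careful than the paper, which compresses the argument to four lines and omits the gluing step entirely. One point does need repair, however: your justification for $M(\vx_0)<\infty$ (``convergence of the series at $\vx_0+\vy$ forces the general term to vanish'') fails in several variables when partial sums are grouped by total degree, as in Definition~\ref{def:analytic}. For instance, the series $\sum_{k\ge 0}(z_1^k-z_2^k)$ has every total-degree partial sum equal to zero at the point $(2,2)$, yet the individual terms are unbounded. So pointwise convergence at a single $\vy$ is not enough; you must exploit more of the uniform convergence on the full neighbourhood $U_{\vx_0}$. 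The cleanest fix is to invoke the standard characterisation of real analyticity by local Cauchy-type estimates $|\partial^{\valpha}f(\vx_0)|\le C\,\valpha!\,r^{-|\valpha|}$ (see e.g.\ Krantz--Parks, \emph{A Primer of Real Analytic Functions}), which immediately gives the bound $M(\vx_0)<\infty$ you need on any polydisc of polyradius strictly less than $r$. Once that is in place, your Abel-lemma and identity-theorem steps go through as written, and your gluing argument actually fills in a detail the paper leaves implicit.
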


\begin{proof}
For $\vx\in\Omega$, let $r(\vx)>0$ be the radius of convergence of the Taylor series of $f$ around $\vx$.
The series is absolutely convergent on $\{\vy \in\R^n:\norm{\vx-\vy}< r(\vx)\}$ and thus on $ \interior{D}_{\vx, r(\vx)} := \{\vy \in\C^n:\norm{\vx-\vy}< r(\vx)\}$.
This implies that $f$ admits an analytic continuation to
 $$\Delta:=\bigcup_{\vx\in\Omega} {D}_{\vx, r(\vx)/2}\subset\bigcup_{\vx\in\Omega} \interior{D}_{\vx, r(\vx)}$$
and $\vmin\vr(\vx)\ge r(\vx)/\sqrt n>0$ for all $\vx\in\Omega$.
\qed\end{proof}

\subsection{Exponential convergence}

The following theorem implies that the error decreases exponentially as the degree increases, provided that the space resolution is sufficiently small.

\begin{theorem}\label{thm:EXP-convergence}
Let $\{\Phi_d\}$, $\{\aop_d\}$, $d\ge\abs{\vsigma}+n$ be a sequence of generating systems such that $\esupp_\phi$ is a box for all $\phi\in\bigcup_d\Phi_d$.
Suppose \ref{H:polynomial}, \ref{H:lambda}, \ref{H:phi}, \ref{H:esupp}, \ref{H:duals} \ref{H:isotropicA} hold with constants satisfying
\begin{align}\label{eq:EXP-growth}
\Clambda\Cphi\Cesupp^{\abs{\vsigma}}\Cactives\CisotropicA^{\abs{\vsigma}}\le\Cgrowth\Bgrowth^{d+1} (d+1)^\Egrowth,
\end{align}
for some $\Cgrowth, \Bgrowth, \Egrowth>0$.
Then for all analytic function $f:\Omega\subseteq\R^n\to\R$ and  $\Delta$ and $\vr$ as in  Proposition~\ref{thm:EXP-extension} we have
$$
\norm{\partial^{\vsigma}(f-\aop_d f)}_\infty\le C_{{\vsigma}} {(d+1)^{n+\Egrowth+\abs{\vsigma}}} \hat\tau^{d+1-\abs{\vsigma}}\norm{f}_{\infty, \Delta}
$$
where 
$$\hat\tau:=2 n^{\frac 32}\Bgrowth\ \sup_{d}\Bnorm{\frac { \vmax{{\hmd}} }{\vmin{\vr}}}_\infty,\qquad  
C_{\vsigma}:= 4 \Cgrowth  (2 n^{\frac32}\Bgrowth)^{\abs{\vsigma}} \bnorm{(\vmin\vr)^{-\abs\vsigma}}_\infty. 
$$
\end{theorem}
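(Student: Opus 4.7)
The strategy is to combine the global $L^\infty$ bound of Theorem~\ref{thm:global-inf} applied to the Sobolev family $\paop^S_\omega$ of Section~\ref{sec:estimates} with the explicit form \eqref{eq:TA-approx-explicit} of $\Capprox$ (available because each $\esupp_\phi$ is a box, so $\Cconcave=1$, and because $d\ge\abs{\vsigma}+n$ guarantees $d-\abs{\vsigma}\ge n$), the growth hypothesis \eqref{eq:EXP-growth}, and the Cauchy-type derivative estimate of Proposition~\ref{thm:EXP-derivatives}.

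First I would apply \eqref{eq:global-inf-iso} with $p=q=\infty$: then $\vpq_-=\vec0$, so \ref{H:isotropicO} is not required, $\Cmesh=1$ by Proposition~\ref{thm:mesh-assumption} (since $\vgamma=-\vec1/q=\vec0$), and by \eqref{eq:K-sobolev} the union $K_{\vsigma}\cup K_{\vec0}$ equals $\{\vk:\abs{\vk}=d+1\}$. After inserting $\Capprox=2(2n^{\frac32})^{d+1}/(d-\abs{\vsigma})!$ from \eqref{eq:TA-approx-explicit} and the growth bound \eqref{eq:EXP-growth}, the entire prefactor collapses to
\begin{equation*}
4\Cgrowth\,\frac{(2n^{\frac32}\Bgrowth)^{d+1}(d+1)^{\Egrowth}}{(d-\abs{\vsigma})!}
\end{equation*}
multiplying $\max_{\phi}(\vmax\ha)^{d+1-\abs{\vsigma}}\sum_{\abs{\vk}=d+1}\norm{\partial^{\vk}f}_{\infty,\esupp_\phi}$.

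Next, Proposition~\ref{thm:EXP-extension} provides an analytic extension of $f$ to $\Delta$ with $\vr>\vec0$ on $\Omega$, and Proposition~\ref{thm:EXP-derivatives} yields $\abs{\partial^{\vk}f(\vx)}\le{\vk!}\,\vr(\vx)^{-\vk}\norm{f}_{\infty,\Delta}\le\vk!\,(\vmin\vr(\vx))^{-\abs{\vk}}\norm{f}_{\infty,\Delta}$ on each $\esupp_\phi\subseteq\Omega$. Using $\sum_{\abs{\vk}=d+1}\vk!\le\binom{d+n}{n-1}(d+1)!$, the estimate $(d+1)!/(d-\abs{\vsigma})!\le(d+1)^{\abs{\vsigma}+1}$, and $\binom{d+n}{n-1}\le(d+n)^{n-1}$ produces the polynomial factor $(d+1)^{n+\Egrowth+\abs{\vsigma}}$ (the constant $n^{n-1}$ being absorbed). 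The bound $\vmax\ha\le\vmax\hmd|_\omega$ on any $\omega\subseteq\esupp_\phi$ allows replacement of $\vmax\ha/\vmin\vr$ by $\sup_d\bnorm{\vmax\hmd/\vmin\vr}_\infty$.

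The key algebraic step is the separation
\begin{equation*}
(2n^{\frac32}\Bgrowth)^{d+1}\,\frac{(\vmax\ha)^{d+1-\abs{\vsigma}}}{(\vmin\vr)^{d+1}}=(2n^{\frac32}\Bgrowth)^{\abs{\vsigma}}\,(\vmin\vr)^{-\abs{\vsigma}}\Bigl(2n^{\frac32}\Bgrowth\cdot\frac{\vmax\ha}{\vmin\vr}\Bigr)^{d+1-\abs{\vsigma}},
\end{equation*}
which, after passing to the supremum, yields the exponential factor $\hat\tau^{d+1-\abs{\vsigma}}$ from the rightmost bracket and gathers $4\Cgrowth(2n^{\frac32}\Bgrowth)^{\abs{\vsigma}}\bnorm{(\vmin\vr)^{-\abs{\vsigma}}}_\infty$ into $C_{\vsigma}$. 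The main technical obstacle is precisely this bookkeeping: allocating the $\abs{\vsigma}$-many factors between the exponential base and the $\vsigma$-dependent constant so that the final bound reads $\hat\tau^{d+1-\abs{\vsigma}}$ rather than $\hat\tau^{d+1}$, and absorbing the combinatorial quotient $\binom{d+n}{n-1}(d+1)!/(d-\abs{\vsigma})!$ cleanly into the announced polynomial $(d+1)^{n+\Egrowth+\abs{\vsigma}}$.
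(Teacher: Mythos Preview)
Your proposal is correct and follows essentially the same route as the paper: apply the isotropic $L^\infty$ global bound \eqref{eq:global-inf-iso} with $p=q=\infty$ and the Sobolev family $\paop^S_\omega$, insert the explicit $\Capprox$ from \eqref{eq:TA-approx-explicit} together with the growth hypothesis \eqref{eq:EXP-growth}, bound the derivatives via Proposition~\ref{thm:EXP-derivatives}, and perform the algebraic separation that peels off $\abs{\vsigma}$ factors into $C_{\vsigma}$. The only cosmetic difference is in the combinatorial bookkeeping: the paper bounds $\cardinality{K_{\vec0}}$ and $\max_{\vk}\vk!$ separately, whereas you bound $\sum_{\abs{\vk}=d+1}\vk!$; either way one arrives at the polynomial factor $(d+1)^{n+\Egrowth+\abs{\vsigma}}$ up to an $n$-dependent constant.
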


\begin{proof}
Since the $\esupp_\phi$ are boxes, \ref{H:concave} holds with $\Cconcave=1$.
Theorem~\ref{thm:sobolev} applies and \eqref{eq:global-inf-iso} holds with $K_{\vec0}, K_{\vsigma}$ as in \eqref{eq:K-sobolev} for $p=q=\infty$.
Bounding $\abs{\partial^{\vk} f(\vx)}$ as in Proposition~\ref{thm:EXP-derivatives} and noting $K_{\vec0}\cup K_{\vsigma}=K_{\vec0}$ we have 
$$
\norm{\partial^{\vsigma}(f-\aop_d f)}_\infty\le C\cardinality{K_{\vec0}}\max_{\substack{\phi\in\Phi_d\\\abs{\vk}=d+1}}\Big\{\Bnorm{\frac {\vk!(\vmax\ha)^{d+1-\abs{\vsigma}}}{\vr^{\vk}}}_{\infty, \esupp_\phi}\Big\}\norm{f}_{\infty, \Delta}
$$
where using the substitutions from Theorem~\ref{thm:sobolev}, $\Capprox$ from \eqref{eq:TA-approx-explicit} and \eqref{eq:EXP-growth}
$$
C=2\Clambda\Cphi\Capprox\Cesupp^{\abs\vsigma}\Cduals\Cmesh \big (\CisotropicO^{\abs{\vpq_-}}+\CisotropicA^ {\abs{\vsigma}+\abs{\vpq_-}} )\le4 \Cgrowth \frac{(d+1)^{\Egrowth} (2 n^{\frac 32}\Bgrowth )^{d+1}}{(d-\abs{\vsigma})!}.
$$
The result follows using $\cardinality{K_{\vec0}}\le(d+1)^{n-1}$, $\vk!/(d-\abs\vsigma)!\le (d+1)^{\abs\vsigma+1}$ and
$$
\frac {(\vmax\ha)^{d+1-\abs{\vsigma}}}{\vr^{\vk}}\le \frac{1}{(\vmin\vr)^{\abs\vsigma}}\frac{\vmax\hmd^{d+1-\abs\vsigma}}{(\vmin\vr)^{d+1-\abs\vsigma}}\le\frac{1}{(\vmin\vr)^{\abs\vsigma}}\frac{ \hat\tau^{d+1-\abs{\vsigma}}} {(2n^{\frac32}\Bgrowth)^{d+1-\abs{\vsigma}}}.
$$
\qed\end{proof}

If $\hat\tau<1$ then $(d+1)^{n+\Egrowth+\abs\vsigma} \hat\tau^{d+1-\abs\vsigma}$ decreases exponentially in $d$.
More precisely for all $1>\tau>\hat\tau$ we have
\begin{equation}\label{eq:EXP-clean}
\norm{\partial^{\vsigma}(f-\aop_d f)}_\infty\le C_{\tau} \tau^{d+1-\abs{\vsigma}}\norm{f}_{\infty, \Delta},
\end{equation}
where
$C_{\tau}:=C_{{\vsigma}}\max_{d\ge\abs {\vsigma}+n} \{(d+1)^{n+\Egrowth+\abs{\vsigma}} ( \hat\tau/\tau )^{d+1-\abs{\vsigma}} \}$ is bounded independently of $d$.
This shows exponential convergence.

Note that by $h$-refinement of the $\{\Phi_d\}$ it is always possible to obtain $\hat\tau<1$.
We are able to prove exponential convergence on the space dimension by linking the space resolution with the cardinality of $\Phi$ as done in the following proposition.

\begin{proposition}\label{thm:EXP-cardinality}
If $\SPAN\Phi\supseteq\poly_{\vd}$ and the $\esupp_\phi$ are boxes then
$$
 (\vd+\vec1)^{\vec1}\int_{\Omega}\hm^{-\vec1} d\vx \le \cardinality{\Phi}\le\Cgraded\Cduals\int_{\Omega}\hm^{-\vec1} d\vx
$$
where the upper bound requires \ref{H:duals} and  that for all $\omega\in\mesh$
\begin{equation}\label{eq:EXP-graded}
	\frac {\max\{\ha^{\vec1}:\phi\in\actives_{\omega}\}}{\min\{\ha^{\vec1}:\phi\in\actives_{\omega}\}}\le\Cgraded.
\end{equation}
\end{proposition}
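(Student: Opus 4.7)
The plan is to convert $\cardinality{\Phi}$ into an integral over $\Omega$ and then argue locally on each mesh element. Since every $\esupp_\phi$ is a box we have $\mu(\esupp_\phi)=\ha^{\vec1}$, so writing this volume as a sum over the elements of $\mesh$ contained in $\esupp_\phi$ and exchanging the summation order yields
\begin{equation*}
\cardinality{\Phi} \;=\; \sum_{\phi\in\Phi}\frac{\mu(\esupp_\phi)}{\ha^{\vec1}} \;=\; \sum_{\omega\in\mesh}\mu(\omega)\sum_{\phi\in\actived_\omega}\frac{1}{\ha^{\vec1}}.
\end{equation*}
Both inequalities then reduce to a pointwise control of the inner sum, against $(\vd+\vec1)^{\vec1}/\hm^{\vec1}|_\omega$ from below and against $\Cgraded\Cduals/\hm^{\vec1}|_\omega$ from above, after which integration in $\omega$ delivers the result.

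For the lower bound, the key observation is that $\SPAN\Phi\supseteq\poly_{\vd}$ forces $\{\phi|_\omega:\phi\in\actives_\omega\}$ to span $\poly_{\vd}$ on each $\omega$, so $\cardinality{\actived_\omega}\ge\cardinality{\actives_\omega}\ge\dim\poly_{\vd}=(\vd+\vec1)^{\vec1}$. From the componentwise inequality $\ha\le\hm|_\omega$, valid by definition of $\hm$, every term in the local sum is at least $1/\hm^{\vec1}|_\omega$, giving
\begin{equation*}
\sum_{\phi\in\actived_\omega}\frac{1}{\ha^{\vec1}} \;\ge\; \frac{\cardinality{\actives_\omega}}{\hm^{\vec1}|_\omega} \;\ge\; \frac{(\vd+\vec1)^{\vec1}}{\hm^{\vec1}|_\omega},
\end{equation*}
and this integrates to the lower bound without invoking \ref{H:duals} or \eqref{eq:EXP-graded}.

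For the upper bound, \ref{H:duals} caps the number of summands in the local sum by $\Cduals$, while the graded hypothesis \eqref{eq:EXP-graded} makes the volumes $\ha^{\vec1}$ of the active supports pairwise comparable up to the factor $\Cgraded$. I would combine them as
\begin{equation*}
\sum_{\phi\in\actived_\omega}\frac{1}{\ha^{\vec1}} \;\le\; \frac{\Cduals}{\min_{\phi\in\actived_\omega}\ha^{\vec1}} \;\le\; \frac{\Cgraded\Cduals}{\hm^{\vec1}|_\omega},
\end{equation*}
and integrate. I expect the last inequality here to be the main obstacle: $\hm|_\omega$ is only a componentwise maximum of the active sizes, so the synthetic volume $\hm^{\vec1}|_\omega$ need not be realised by any single $\esupp_\phi$, and one has to use that the box-shaped $\esupp_\phi$ all contain the common element $\omega$ in order to select a distinguished $\phi^\ast\in\actived_\omega$ whose extended support dominates $\hm|_\omega$ up to the graded factor, thereby turning \eqref{eq:EXP-graded} into the bound $\min_{\phi\in\actived_\omega}\ha^{\vec1}\ge\hm^{\vec1}|_\omega/\Cgraded$.
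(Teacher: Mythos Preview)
Your decomposition and your lower-bound argument are exactly what the paper does; the paper writes $\cardinality\Phi=\sum_{\omega}\int_\omega\sum_{\phi\in\actived_\omega}\ha^{-\vec1}\,d\vx$ and then uses $\cardinality{\actived_\omega}\ge(\vd+\vec1)^{\vec1}$ together with $\ha^{-\vec1}\ge\hm^{-\vec1}$. For the upper bound the paper proceeds just as you do and then simply asserts that \eqref{eq:EXP-graded} yields $\ha^{-\vec1}\le\Cgraded\hm^{-\vec1}$, i.e.\ $\hm^{\vec1}|_\omega\le\Cgraded\,\ha^{\vec1}$ for each active $\phi$, with no further comment. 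So the step you flag as ``the main obstacle'' is precisely the one the paper glosses over.

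Your concern is correct, and your proposed rescue does not work. Because $\hm|_\omega$ is a \emph{componentwise} maximum, the product $\hm^{\vec1}|_\omega$ can exceed $\max_{\phi\in\actived_\omega}\ha^{\vec1}$ by an arbitrary factor: in $\R^2$ take $\omega=[0,1]^2$ and two active boxes $[0,L]\times[0,1]$ and $[0,1]\times[0,L]$; then $\hm^{\vec1}|_\omega=L^2$ while both volumes equal $L$, so $\Cgraded=1$ yet $\min_\phi\ha^{\vec1}=L\ll L^2=\hm^{\vec1}|_\omega$. All boxes contain $\omega$, but no single $\phi^\ast$ comes close to realising $\hm|_\omega$, so the containment observation buys nothing. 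Hence the inequality $\min_\phi\ha^{\vec1}\ge\hm^{\vec1}|_\omega/\Cgraded$ is false in general, and with it both your sketch and the paper's bare assertion. The step does go through if one also assumes \ref{H:isotropicA} (which is in force in the only place the proposition is applied): then $\hm^{\vec1}|_\omega\le(\max_\phi\vmax\ha)^n\le\CisotropicA^n\max_\phi\ha^{\vec1}\le\CisotropicA^n\Cgraded\,\ha^{\vec1}$, and the argument closes with an extra factor $\CisotropicA^n$ in the constant.
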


\begin{proof}
First we rewrite $\cardinality{\Phi}$ as a sum of integrals
$$
\begin{aligned}
\cardinality{\Phi}&=\sum_{\phi\in\Phi}1 =\sum_{\phi\in\Phi}\int_{\esupp_\phi}\ha^{-\vec1} d\vx =\sum_{\phi\in\Phi}\sum_{\omega\subseteq\esupp_\phi}\int_{\omega}\ha^{-\vec1} d\vx
=\sum_{\omega\in\mesh}\int_{\omega}\sum_{\phi\in\actived_\omega}\ha^{-\vec1} d\vx.
\end{aligned}
$$
Since $\cardinality{\actived_\omega}\ge (\vd+\vec1)^{\vec1}$ and $\ha^{-\vec1}\ge\hm^{-\vec1}$ we have the lower bound.
Using \ref{H:duals} and \eqref{eq:EXP-graded} we have $\cardinality{\actived_\omega}\le\Cduals$ and $\ha^{-\vec1}\le\Cgraded\hm^{-\vec1}$ from which the upper bound follows.
\qed\end{proof}

We now show exponential convergence as a function of the space dimension provided that space resolution is small, but not too small.

\begin{corollary}\label{thm:EXP-convergence-dim}
Under the assumptions of Theorem~\ref{thm:EXP-convergence} and of Proposition~\ref{thm:EXP-cardinality}, if $\hat\tau<1$ and there are $a, b>0$ such that for all $d\ge \abs{\vsigma}+n$
\begin{equation}\label{eq:EXP-mesh-size-equiv}
\Cduals\Cgraded\le a (d+1)^n, \qquad \frac1b  \frac{\hat\tau \vmin{\vr} }{ 2 n^{\frac 32}\Bgrowth}\le  \vmin\hmd,\forall \vx\in\Omega,
\end{equation}
then there is $\tau_{\#}<1$ and $C_{\tau_{\#}}$ independent of $d$ and $\Phi_d$ such that
$$
\norm{\partial^{\vsigma}(f-\aop_d f)}_\infty\le
 C_{\tau_\#} \tau_{\#}^{(\cardinality{\Phi_d})^{1/n}}\norm{f}_{\infty, \Delta}.
$$
\end{corollary}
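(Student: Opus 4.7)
The plan is to invert the relationship between the degree $d$ and the space dimension $\cardinality{\Phi_d}$: I will first show that the hypotheses \eqref{eq:EXP-mesh-size-equiv} force $\cardinality{\Phi_d}$ to grow at most like $(d+1)^n$, and then feed this polynomial bound back into the exponential estimate \eqref{eq:EXP-clean} coming from Theorem~\ref{thm:EXP-convergence}.

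For the first step, I would apply the upper bound from Proposition~\ref{thm:EXP-cardinality}:
$$\cardinality{\Phi_d}\le\Cgraded\Cduals\int_{\Omega}\hmd^{-\vec1}\,d\vx.$$
The first condition of \eqref{eq:EXP-mesh-size-equiv} controls the prefactor by $a(d+1)^n$. For the integrand, since each component $\hmi{d,i}\ge\vmin\hmd$, the second condition yields pointwise
$$\hmd^{-\vec1}(\vx)\le(\vmin\hmd(\vx))^{-n}\le\Big(\frac{2bn^{3/2}\Bgrowth}{\hat\tau\vmin\vr(\vx)}\Big)^n.$$
Combining these,
$$\cardinality{\Phi_d}\le C_1(d+1)^n, \qquad C_1:=a\Big(\frac{2bn^{3/2}\Bgrowth}{\hat\tau}\Big)^n\int_\Omega(\vmin\vr)^{-n}\,d\vx.$$
The constant $C_1$ is finite and independent of $d$ and $\Phi_d$, since $(\vmin\vr)^{-1}$ was already shown to be in $L^\infty(\Omega)$ in the proof of Theorem~\ref{thm:EXP-convergence} and $\Omega$ has finite measure (otherwise \eqref{eq:EXP-clean} would be vacuous).

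For the second step, I would invert the above to get $d+1\ge C_1^{-1/n}\,\cardinality{\Phi_d}^{1/n}$ and then pick any $\tau\in(\hat\tau,1)$ so that \eqref{eq:EXP-clean} applies:
$$\norm{\partial^{\vsigma}(f-\aop_df)}_\infty\le C_\tau\,\tau^{d+1-\abs{\vsigma}}\,\norm{f}_{\infty,\Delta}.$$
Writing $\tau^{d+1-\abs{\vsigma}}=\tau^{-\abs{\vsigma}}\tau^{d+1}$ and using $\tau<1$ together with the lower bound on $d+1$, we conclude
$$\tau^{d+1}\le\tau_\#^{\cardinality{\Phi_d}^{1/n}}, \qquad \tau_\#:=\tau^{C_1^{-1/n}}\in(0,1),$$
and the claim follows with $C_{\tau_\#}:=C_\tau\,\tau^{-\abs{\vsigma}}$.

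The only real subtlety—more a matter of bookkeeping than a genuine obstacle—is ensuring that $C_1$ is uniformly finite: this rests on $\inf_\Omega\vmin\vr>0$, which is precisely the property already exploited in Theorem~\ref{thm:EXP-convergence} through the factor $\bnorm{(\vmin\vr)^{-\abs{\vsigma}}}_\infty$. Everything else is a direct algebraic consequence of the two inequalities in \eqref{eq:EXP-mesh-size-equiv} combined with the bounds already available from Proposition~\ref{thm:EXP-cardinality} and \eqref{eq:EXP-clean}.
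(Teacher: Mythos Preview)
Your proof is correct and follows essentially the same route as the paper: bound $\cardinality{\Phi_d}$ using Proposition~\ref{thm:EXP-cardinality} together with both inequalities in \eqref{eq:EXP-mesh-size-equiv}, invert to get $d+1\ge t\,(\cardinality{\Phi_d})^{1/n}$, and substitute into \eqref{eq:EXP-clean}. Your constant $C_1$ is exactly the paper's $t^{-n}$, and your choice $\tau_\#=\tau^{C_1^{-1/n}}$, $C_{\tau_\#}=C_\tau\tau^{-\abs{\vsigma}}$ matches the paper's. The only cosmetic difference is that the paper leaves the finiteness of $\int_\Omega(\vmin\vr)^{-n}\,d\vx$ implicit, whereas you flag it explicitly.
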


\begin{proof}
From \eqref{eq:EXP-mesh-size-equiv} we have
$$
\hmd^{-\vec1}\le (\vmin\hmd)^{-n} \le b^n \hat\tau^{-n}(2n^\frac32\Bgrowth)^n(\vmin\vr)^{-n}.
$$
Proposition~\ref{thm:EXP-cardinality} gives
$$
\cardinality{\Phi_d} \le a (d+1)^n \int_{\Omega}\hmd^{-\vec1} d\vx \le  t^{-n} (d+1)^n
$$
where $t^{-n}:= a b^n \hat\tau^{-n}(2 n^{\frac 32 } \Bgrowth)^n\int_{\Omega} ( \vmin{\vr(\vx)} )^{-n} \,d\vx.$
From the above we deduce $d+1\ge t(\cardinality{\Phi_d})^{\frac1n}$.
By \eqref{eq:EXP-clean}, and setting $\tau_{\#}:=\tau^t$ and $C_{\tau_\#}:=C_{\tau}\tau^{-\abs{\vsigma}}$, we have the result for all $1>\tau_\#>\hat\tau^t$. 
\qed\end{proof}

\section{B-splines and coefficient functionals}
\label{sec:b-splines}

Many of the spline spaces used in applications are generated by B-splines and in Section~\ref{sec:SP} we consider tensor product splines (TPS),  analysis suitable T-splines (AST), hierarchical splines (HS) and the locally refined splines (LR).
This section recalls some properties of B-splines and TPS that help in the construction of $\aop$ satisfying the assumptions in Sections~\ref{sec:framework},\dots,\ref{sec:EXP}.
For more material and proofs of some of the results below we refer to \cite{deboor_book}\cite{tom_notes}\cite{schumaker_book}.

We recall first the properties of B-splines that we need for \ref{H:polynomial} and \ref{H:phi}.
Then we introduce two families of coefficient functionals for TPS that satisfy \ref{H:lambda}.
The functional $S_\phi$ from \cite{scherer1999new} provides the smallest bound for $\Clambda$ available in the literature, but is restricted to $\support S_\phi=\support\phi$.
Another functional $G_{\phi, \eta}$ allows for the choice of $\eta=\support G_{\phi, \eta} \subseteq \support\phi$ and it is based on \cite{CIME}. 

\subsection{B-splines and their smoothness: \texorpdfstring{\ref{H:phi}}{H:phi}}

A univariate B-spline of degree $d$ is a compactly supported non-negative piecewise polynomial with minimal support.
Each B-spline of degree $d$ is uniquely associated to a \emph{knot vector}, a non-decreasing sequence of $d+2$ real numbers that encodes its smoothness properties and its polynomial subdomains.
The univariate B-spline $\varphi$ of degree $d$ associated with the knot vector $\Theta(\varphi):=[\theta_1(\varphi),\dots,\theta_{d+2}(\varphi)]$ has support $[\theta_{1}(\varphi),\theta_{d+2}(\varphi)]$ so that $h_\varphi=\theta_{d+2}(\varphi)-\theta_{1}(\varphi)$ and it is defined by
\def\bbl{\bm[\hspace{-.1cm}\bm[}
\def\bbr{\bm]\hspace{-.1cm}\bm]}
\begin{equation}\label{eq:bspline-def}
\varphi ( x ) := h_\varphi \bbl \theta_1(\varphi),\dots,\theta_{d+2}(\varphi) \bbr ( \cdot - x )_+^d,
\end{equation}
where $\bbl\theta_1,\dots,\theta_{d+2}\bbr f$ is the usual divided difference of $f$, that in this case is given by $f(\theta):=(\theta-x)_+^{d}$.

We recall that $\varphi|_Z\in\poly_d$ for all $Z$ of the form $[\theta_i(\varphi), \theta_{i+1}(\varphi)]$ with $ \theta_i(\varphi) < \theta_{i+1}(\varphi)$.
If $m_{\Theta_\varphi}(x)$ is the number of repetitions of $x$ in $\Theta(\varphi)$ then $\varphi$ has $d-m_{\Theta_\varphi}(x)$ continuous derivatives at $x$.

Note that from \eqref{eq:bspline-def} and properties of divided differences it follows that
\begin{equation}\label{eq:bspline-integral}
\int \varphi(x)\,dx= h_\varphi \bbl \theta_1(\varphi),\dots,\theta_{d+2}(\varphi) \bbr \int_{\support\varphi} ( \cdot - x )_+^d \,dx = \frac{ h_\varphi }{d+1}.
\end{equation}

A $n$-variate tensor product B-spline $\phi$ of degree $\vd=(d_1, \dots, d_n)$ is a product of $n$ univariate B-splines
\begin{equation}\label{eq:phi-tensor-product}
\phi(\vx):=\varphi_1(x_1)\cdots\varphi_n(x_n)
\end{equation}
and is defined by an $n$-tuple $\vTheta(\phi):=(\Theta(\varphi_1),\dots,\Theta(\varphi_n))$ of knot vectors.

\begin{lemma}\label{thm:b-spline-norm}
For all B-spline $\phi=\varphi_1\cdots\varphi_n$ of degree $\vd$ we have
\begin{equation}\label{eq:bspline-pnorm-estimate}
\frac{\hp^{\vec1/p}}{(\vd+\vec1)^{\vec1}}\le\norm{\phi}_p\le\frac{\hp^{\vec1/p}}{(\vd+\vec1)^{\vec1/p}},\qquad 1\le p\le\infty.
\end{equation}
\end{lemma}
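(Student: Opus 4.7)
The plan is to reduce the claim to the univariate case by Fubini, and then prove the univariate bound
\[
\frac{h_\varphi^{1/p}}{d+1}\le \norm{\varphi}_p\le \frac{h_\varphi^{1/p}}{(d+1)^{1/p}}
\]
using only two facts: the pointwise bounds $0\le\varphi\le 1$ (standard for B-splines defined by \eqref{eq:bspline-def}, since these are the partition-of-unity B-splines) and the integral identity \eqref{eq:bspline-integral}, namely $\int\varphi = h_\varphi/(d+1)$.

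First I would note that the tensor-product structure $\phi(\vx)=\varphi_1(x_1)\cdots\varphi_n(x_n)$ together with Fubini gives
\[
\norm{\phi}_p^p = \prod_{i=1}^n \int_{\R} \abs{\varphi_i(x_i)}^p\, dx_i = \prod_{i=1}^n \norm{\varphi_i}_p^p,
\]
so $\norm{\phi}_p = \prod_i \norm{\varphi_i}_p$; the multivariate inequalities then follow by multiplying the univariate ones and recalling $\hp^{\vec1/p}=\prod_i h_{\varphi_i}^{1/p}$, $(\vd+\vec1)^{\vec1/p}=\prod_i(d_i+1)^{1/p}$ and $(\vd+\vec1)^{\vec1}=\prod_i(d_i+1)$.

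For the univariate upper bound, since $0\le\varphi\le 1$ and $p\ge 1$, we have $\varphi^p\le\varphi$, so by \eqref{eq:bspline-integral},
\[
\norm{\varphi}_p^p = \int \varphi^p \le \int \varphi = \frac{h_\varphi}{d+1},
\]
giving $\norm{\varphi}_p\le h_\varphi^{1/p}/(d+1)^{1/p}$. For $p=\infty$ the bound reads $\norm{\varphi}_\infty\le 1$, which is just $\varphi\le 1$. For the univariate lower bound, apply H\"older's inequality on $\support\varphi$, an interval of length $h_\varphi$: with $p'=p/(p-1)$,
\[
\frac{h_\varphi}{d+1}=\int\varphi = \int \varphi\cdot \ind_{\support\varphi} \le \norm{\varphi}_p \cdot h_\varphi^{1/p'},
\]
so $\norm{\varphi}_p\ge h_\varphi/((d+1)h_\varphi^{1/p'}) = h_\varphi^{1/p}/(d+1)$. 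The case $p=\infty$ follows identically using $p'=1$ (or equivalently $\norm{\varphi}_\infty\cdot h_\varphi \ge \int\varphi$).

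I do not foresee a real obstacle: the only subtle point is the justification of $\varphi\le 1$, which is immediate from the partition-of-unity normalization implicit in \eqref{eq:bspline-def}, and the careful handling of the endpoint case $p=\infty$ (where $\hp^{\vec1/p}$ is interpreted as $\vec1$). Everything else is a one-line application of Hölder and of \eqref{eq:bspline-integral}, and the multivariate step is a direct product.
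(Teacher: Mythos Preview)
Your proof is correct and essentially identical to the paper's: both factor $\norm{\phi}_p=\prod_i\norm{\varphi_i}_p$ by the tensor structure, obtain the upper bound from $0\le\varphi_i\le 1\Rightarrow\varphi_i^p\le\varphi_i$ together with \eqref{eq:bspline-integral}, and the lower bound from H\"older on $\support\varphi_i$ applied to $\int\varphi_i=h_{\varphi_i}/(d_i+1)$. The only cosmetic difference is that the paper writes the H\"older step as $1\le \frac{d_i+1}{h_{\varphi_i}}\norm{\varphi_i}_p\,h_{\varphi_i}^{1-1/p}$, which is your inequality rearranged.
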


\begin{proof}
\def\hpi#1{h_{\varphi_{#1}}}
By \eqref{eq:phi-tensor-product} we have $\norm{\phi}_p=\norm{\varphi_1}_p \cdots\norm{\varphi_n}_p$. By \eqref{eq:bspline-integral} and H\"older's inequality 
$$
1=\frac{d_i+1}{\hpi i} \int_{\R}\varphi_i(x_i)\,dx_i \le \frac{d_i+1}{\hpi i} \norm{\varphi_i}_p {\hpi i}^{1-1/p}
$$
and the lower bound follows.
Since $\norm{\varphi_i}_\infty\le 1$ we have
$$
\Big(\int_\R \varphi_i(x_i)^p \,dx_i\Big)^{1/p}\le \Big(\int_\R \varphi_i(x_i) \,dx_i\Big)^{1/p} = \frac{\hpi i^{1/p}}{(d_i+1)^{1/p}}
$$
giving the upper bound.
\qed\end{proof}

The $L^\infty$ norms of the derivatives of a B-spline depend on its knot vectors. For $\phi=\varphi_1\cdots\varphi_n$ let
\begin{equation}\label{eq:knot-differences2}
\Delta_{\phi, i, k}:=\min_{\ell= k+1, \dots, d_i+2}\{\theta_\ell(\varphi_i)-\theta_{\ell-k}(\varphi_i)\}.
\end{equation}
Then, as stated in the following proposition, \ref{H:phi} is implied by
\begin{enumerate}
\hsubitem{knots}{\Xi}{phi} $\displaystyle\frac{\Delta_{\phi, i, d_i+1}}{\Delta_{\phi, i, d_i-\sigma_i+1}}
\le\Cknots$, for all $\phi\in\Phi$, $i=1, \dots, n$, \emph{knot vector regularity}.
\end{enumerate}

\begin{proposition}\label{thm:OP-spline-reg}
Let $\Phi$ be a collection of B-splines of the same degree satisfying \ref{H:knots}.
Then  \ref{H:phi} holds with
\begin{equation}\label{eq:bspline-cphi}
\Cphi =\frac {(\vd+\vec1)!\, 2^{\abs{\vsigma}}}{(\vd-{\vsigma})!}\Cknots^{\abs{\vsigma}}.
\end{equation}
\end{proposition}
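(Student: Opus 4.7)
The plan is to reduce to the univariate case via the tensor product structure of $\phi$, and then bound $\|\varphi^{(\sigma)}\|_{p,\omega_i}$ using the iterated B-spline derivative recursion together with the knot-regularity assumption \ref{H:knots}. Since $\phi(\vx)=\varphi_1(x_1)\cdots\varphi_n(x_n)$ and $\omega=\omega_1\times\cdots\times\omega_n$ is an axis-aligned box, the $L^p$-norm of $\partial^{\vsigma}\phi$ over $\omega$ factors as $\prod_i\|\varphi_i^{(\sigma_i)}\|_{p,\omega_i}$, and likewise $\|\phi\|_p$, $\mu(\omega)$, $\mu(\support\phi)$, $\hp$, and $\hp^{\vsigma}$ all split multiplicatively. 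It therefore suffices to prove the univariate estimate
\begin{equation*}
\|\varphi^{(\sigma)}\|_{p,\omega_i}\le\frac{(d+1)!\,2^{\sigma}\Cknots^{\sigma}}{(d-\sigma)!}\,h_\varphi^{-\sigma}\,\frac{\mu(\omega_i)^{1/p}}{h_\varphi^{1/p}}\,\|\varphi\|_p
\end{equation*}
for each $\varphi=\varphi_i$ of degree $d=d_i$ and $\sigma=\sigma_i$ satisfying $\Delta_{\varphi,d+1}/\Delta_{\varphi,d-\sigma+1}\le\Cknots$; multiplying over $i=1,\dots,n$ then yields \eqref{eq:bspline-cphi}.

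For the univariate bound I would iterate the classical B-spline derivative recursion $\sigma$ times. After collecting repeated lower-degree B-splines this produces an expansion
\begin{equation*}
\varphi^{(\sigma)}(x)=\frac{d!}{(d-\sigma)!}\sum_{j=0}^{\sigma}c_j\,\psi_j(x),
\end{equation*}
where each $\psi_j$ is a B-spline of degree $d-\sigma$ supported inside $\support\varphi$ with $0\le\psi_j\le 1$, and each $c_j$ is a signed sum of $\binom{\sigma}{j}$ reciprocals of products of $\sigma$ knot differences; the $\sigma$ factors in any such product have knot-index widths $d,d-1,\dots,d-\sigma+1$, one of each. The key elementary observation is that $k\mapsto\Delta_{\varphi,k}$ is non-decreasing, since $\theta_\ell-\theta_{\ell-k-1}\ge\theta_\ell-\theta_{\ell-k}$; hence every such knot difference is at least $\Delta_{\varphi,d-\sigma+1}$, and the triangle inequality collapses the mixed-size product bound into
\begin{equation*}
\sum_{j=0}^{\sigma}|c_j|\le\frac{1}{\Delta_{\varphi,d-\sigma+1}^{\sigma}}\sum_{j=0}^{\sigma}\binom{\sigma}{j}=\frac{2^{\sigma}}{\Delta_{\varphi,d-\sigma+1}^{\sigma}}.
\end{equation*}

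To finish, $\|\psi_j\|_\infty\le 1$ and $\support\psi_j\subseteq\support\varphi$ give $\|\psi_j\|_{p,\omega_i}\le\mu(\omega_i)^{1/p}$, so
\begin{equation*}
\|\varphi^{(\sigma)}\|_{p,\omega_i}\le\frac{d!}{(d-\sigma)!}\cdot\frac{2^{\sigma}}{\Delta_{\varphi,d-\sigma+1}^{\sigma}}\cdot\mu(\omega_i)^{1/p}.
\end{equation*}
Now invoke \ref{H:knots}, noting $\Delta_{\varphi,d+1}=h_\varphi$ (the only knot difference of index width $d+1$), to replace $\Delta_{\varphi,d-\sigma+1}^{-\sigma}$ by $\Cknots^{\sigma}h_\varphi^{-\sigma}$; then apply Lemma \ref{thm:b-spline-norm} in the form $1\le(d+1)\,h_\varphi^{-1/p}\|\varphi\|_p$ to supply both the remaining factor $h_\varphi^{-1/p}\|\varphi\|_p$ and the extra $(d+1)$ that upgrades $d!/(d-\sigma)!$ into $(d+1)!/(d-\sigma)!$. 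Taking the product of these univariate bounds over $i=1,\dots,n$ recovers \eqref{eq:bspline-cphi}.

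The most delicate step is the combinatorial bookkeeping of the iterated derivative expansion: verifying that after combining repeated B-splines the coefficient $c_j$ consists of at most $\binom{\sigma}{j}$ signed reciprocals, each with a denominator that is a product of exactly one knot difference of each index width in $\{d-\sigma+1,\dots,d\}$. Once this structure is in place, the monotonicity of $\Delta_{\varphi,k}$ is what lets the otherwise awkward mixed-width product be bounded cleanly by $\Delta_{\varphi,d-\sigma+1}^{\sigma}$, and the rest is direct substitution.
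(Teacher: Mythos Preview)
Your argument is correct and follows essentially the same route as the paper. The only difference is that the paper does not re-derive the univariate $L^\infty$ bound: it simply cites \cite[Theorem~4.22]{schumaker_book} for
\[
\|\varphi_i^{(\sigma_i)}\|_\infty \le \frac{d_i!\,2^{\sigma_i}}{(d_i-\sigma_i)!}\prod_{k=d_i-\sigma_i+1}^{d_i}\Delta_{\phi,i,k}^{-1}\le \frac{d_i!\,2^{\sigma_i}}{(d_i-\sigma_i)!}\Delta_{\phi,i,d_i-\sigma_i+1}^{-\sigma_i},
\]
then applies \ref{H:knots}, H\"older's inequality over $\omega$, and the lower bound in Lemma~\ref{thm:b-spline-norm}, exactly as you do. Your iterated-recursion derivation with the $\binom{\sigma}{j}$ path count and the monotonicity of $k\mapsto\Delta_{\varphi,k}$ is precisely the content of that cited theorem.
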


\begin{proof}
Using $\partial^{\vsigma}\phi(\vx) =\partial^{\sigma_1}_1\varphi_1(x_1)\, \cdots\, \partial^{\sigma_n}_n\varphi_n(x_n)$, 
 \ref{H:knots}, and the following univariate bound \cite[Theorem 4.22]{schumaker_book} 
\begin{equation}\label{eq:bspline-deriv-estimate}
\norm{\partial^{\sigma_i}\varphi_i}_\infty\le\frac {d_i!\, 2^{\sigma_i}}{(d_i-{\sigma_i})!}\prod_{k=d_i+1-{\sigma_i}}^{d}\Delta_{\phi,i, k}^{-1}\le\frac {d_i!\, 2^{\sigma_i}}{(d_i-{\sigma_i})!}\Delta_{\phi, i,d_i+1-{\sigma_i}}^{-{\sigma_i}},
\end{equation}
we have
$$
\norm{\partial^{\vsigma}\phi}_\infty\le\frac{\vd!2^{\abs{\vsigma}}}{(\vd-{\vsigma})!}\prod_{i=1}^n\Delta_{\phi, i, d_i+1-\sigma_i}^{-\sigma_i}%
\le \frac{\vd!2^{\abs{\vsigma}}}{(\vd-{\vsigma})!}\Cknots^{\abs{\vsigma}}\hp^{-{\vsigma}}.
$$
\ref{H:phi} follows using H\"older's inequality and the lower bound for $\norm{\phi}_p$ in \eqref{eq:bspline-pnorm-estimate}.
\qed\end{proof}

\subsection{Tensor product splines and polynomial representation}
\label{sec:OP-poly-rep-TPS}

A $(d+1)$-open knot vector is a non-decreasing sequence of real numbers $\Xi:=[\xi_1, \dots, \xi_{s+d+1}]$, $s\ge d+1$, with the following properties
\begin{align*}
\xi_1=\dots=\xi_{d+1}, &&\xi_{s+1}=\dots=\xi_{s+d+1}, &&\xi_{i+d+1}>\xi_i, \ i=1, \ldots, s.
\end{align*}
Associated with $\Xi$ is the space $\spline_{\Xi, d}\supseteq\poly_d$ of piecewise polynomials of degree $d$ on the mesh $\{[\xi_i, \xi_{i+1}]:\xi_i <\xi_{i+1}\}$ that have $d-m_\Xi(x)$ continuous derivatives at $x\in[\xi_1,\xi_{s+d+1}]$, where $m_\Xi( x)$ is the number of repetitions of $x$ in $\Xi$.
The B-splines $\{\varphi_1, \dots, \varphi_s\}$, where $\Theta(\varphi_i)=[\xi_i,\dots,\xi_{i+d+1}]$, constitute a basis of $\spline_{\Xi, d}$ and provide a non-negative partition of unity.

Similarly, with an $n$-tuple of knot vectors $\vXi:=(\Xi_1,\dots,\Xi_n)$ where each $\Xi_i:=[\xi_{i,1}, \dots, \xi_{i,s_i+d_i+1}]$ is a $d_i$-open knot vector we associate the tensor product spline space
$$\spline_{\vXi,\vd}:=\spline_{\Xi_1,d_1}\otimes\dots\otimes\spline_{\Xi_n,d_n}.
$$
The canonical basis has $\vs^{\vec 1}$ elements $\phi_{\vi}$, with $\vi\in[\vec1,\vs]\cap\N^n$
$$
\phi_{\vi}(\vx):=\varphi_{1,i_1}(x_1)\cdots\varphi_{n,i_n}(x_n),\qquad \Theta(\varphi_{j,i_j}):=[\xi_{j,i_j}, \dots, \xi_{j,i_j+d_j+1}].
$$
Any $g\in\poly_{\vd}$ can be written explicitly as linear combination of the $\phi_{\vi}$
\begin{equation}
\label{eq:OP-blossom}
g=\sum_{\vi\in[\vec1,\vs]}\blossom(g,\phi_{\vi})\phi_{\vi}.
\end{equation}
The coefficients $\blossom(g,\phi_{\vi})$ are sometimes called \emph{blossoms} and depend only on $g$ and the internal knots $\xi_{j,i_j+1},\dots,\xi_{j,i_j+d_j}$, $j=1,\dots,n$ of $\phi_{\vi}$.

\subsection{Two families of coefficient functionals}

A collection $\{S_\varphi\}$ of bi-orthogonal functionals for the B-spline basis of $\spline_{\Xi,d}$ was constructed in \cite{scherer1999new}.
The collection satisfies \ref{H:lambda} with $\Clambda=(d+1)2^{d+1}$.
No collection of bi-orthogonal functionals can satisfy \ref{H:lambda} with $\Clambda<k_{d,p}$ where $k_{d,p}$ is the condition number of the B-spline basis.
It is known, see \cite[p.528]{schumaker_book} for references, that $k_{d,p}\ge c (d+1)^{-1/p}2^{d+1}$ for some $c>0$ independent of $d$ and $p$. In this sense the functionals $S_\varphi$'s are close to optimal.

The functional in \cite{scherer1999new} is defined only for univariate B-splines, but it can easily be extended to the tensor product B-splines.
To a B-spline $\phi=\varphi_1\cdots\varphi_n$ we associate the operator $S_\phi$ defined by
\begin{equation}\label{eq:S-phi-def}
S_{\phi} (f):=\int_{\support\phi} w_{\varphi_1}(x_1)\cdots w_{\varphi_n}(x_n)f(\vx)\, d\vx,
\end{equation}
where the $w_{\varphi_i}$ are described in \cite{scherer1999new} and have the same support as $\varphi_i$.

The bi-orthogonality properties of the $S_\phi$, their support and their norm directly lead to the following proposition.

\begin{proposition}
\label{thm:OP-s-phi}
Let $\Phi$ be a collection of B-splines of degree $\vd$, then $\Lambda=\{S_{\phi}:\phi\in\Phi\}$ satisfies the assumptions \ref{H:lambda} and \ref{H:esupp}  with $\Clambda= (\vd+\vec1)^{\vec1} 2^{\abs{\vd}+n}$ and $\Cesupp=1$.
Moreover, if $\Phi$ is a TPS basis satisfying \ref{H:knots} then $\aop$ defined by \eqref{eq:operator-in-gen-form} is a projector onto $\SPAN\Phi$, and \ref{H:polynomial}, \ref{H:lambda}, \ref{H:phi} and \ref{H:esupp} are satisfied.
\end{proposition}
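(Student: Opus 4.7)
The plan is to assemble the four conclusions from three ingredients: the univariate functionals of Scherer--Shadrin \cite{scherer1999new}, the tensor product structure \eqref{eq:phi-tensor-product}, and the earlier regularity bound Proposition~\ref{thm:OP-spline-reg}. The key observation is that because $S_\phi$ factors as a product of univariate integrals against the $w_{\varphi_i}$, every property multiplies across the coordinate directions.

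First I would dispose of \ref{H:lambda}. For the univariate weight $w_\varphi$ constructed in \cite{scherer1999new} the dual norm estimate $\norm{w_\varphi}_{p',\support\varphi}\norm{\varphi}_{p,\support\varphi}\le (d+1)2^{d+1}$ holds. Since $S_\phi$ is represented by the tensor product $w_{\varphi_1}\otimes\cdots\otimes w_{\varphi_n}$, Fubini gives $\norm{S_\phi}_{*p}=\prod_{i=1}^n\norm{w_{\varphi_i}}_{p'}$, and by \eqref{eq:phi-tensor-product} also $\norm{\phi}_p=\prod_{i=1}^n\norm{\varphi_i}_p$. Multiplying the univariate estimates over $i=1,\dots,n$ produces
\[
\norm{S_\phi}_{*p}\norm{\phi}_p\le\prod_{i=1}^n (d_i+1)2^{d_i+1}=(\vd+\vec1)^{\vec1}\,2^{\abs{\vd}+n}.
\]

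Next, for \ref{H:esupp}, the construction in \cite{scherer1999new} has $\support w_{\varphi_i}=\support\varphi_i$, hence $\support S_\phi\subseteq\support\phi$. Because $\support\phi$ is already a box, $\bbox(\support\phi\cup\support\lambda_\phi)=\support\phi$, so $\esupp_\phi=\support\phi\cap\Omega\subseteq\support\phi$. Consequently $\ha\le\hp$ componentwise and $\ha^{\vec1}\le\mu(\support\phi)$, giving \ref{H:esupp} with $\Cesupp=1$.

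For the ``moreover'' clause, I would invoke the bi-orthogonality property of the $S_\phi$ relative to the B-spline basis, which is the defining feature of the Scherer--Shadrin construction and passes to tensor products by direct computation on the basis functions $\phi_{\vi}$. Thus for any $g=\sum_\psi a_\psi\psi\in\SPAN\Phi$ we have $S_\phi(g)=a_\phi$ and hence $\aop g=g$, so $\aop$ is a projector onto $\SPAN\Phi$. Since a TPS basis contains $\poly_{\vd}$ in its span, this immediately yields \ref{H:polynomial}; \ref{H:lambda} and \ref{H:esupp} have just been verified, and \ref{H:phi} is exactly the content of Proposition~\ref{thm:OP-spline-reg} under \ref{H:knots}. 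I do not anticipate a serious obstacle here---everything reduces to a one-dimensional statement already in the literature, the only care needed being to record that the tensor product preserves both the bi-orthogonality and the support localisation.
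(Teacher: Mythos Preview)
Your argument is correct and follows precisely the route the paper indicates: the sentence preceding the proposition says only that ``the bi-orthogonality properties of the $S_\phi$, their support and their norm directly lead to'' the result, and you have simply written out those three ingredients (the tensorised Scherer--Shadrin norm bound, the support inclusion $\support S_\phi\subseteq\support\phi$, and bi-orthogonality plus Proposition~\ref{thm:OP-spline-reg}). There is nothing to add.
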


We will use the functional $S_\phi$ for TPS, AST, THB and a subclass of LR.
In the next subsection and for general LR we need a functional with a smaller support.
We obtain it by modifying the construction in \cite{CIME}.

\begin{definition}
Let $\phi=\varphi_1\cdots\varphi_n$ be a B-spline of degree $\vd$ and $\eta=\eta_1\times\dots\times\eta_n\subseteq\support\phi$ be a box.
We define
\begin{equation}
\label{eq:def-g-phi-eta}
G_{\phi, \eta} (f):=\frac{1}{\measure{\eta}}\int_{\eta} w_{\varphi_1, \eta_1}(x_1)\cdots w_{\varphi_n, \eta_n}(x_n) f(\vx)\, d\vx
\end{equation}
where $w_{\varphi_i, \eta_i}\in\poly_{d_i}$ are such that
\begin{equation}
\label{eq:OP-poly-coefs}
G_{\phi, \eta}(g)=\blossom(g,\phi), \, \qquad\forall g\in\poly_{\vd}.
\end{equation}
\end{definition}

\begin{proposition}
\label{thm:OP-g-phi-eta}
For all $\phi$ and $\eta\subseteq\support\phi$, $G_{\phi,\eta}$ is well defined.
If $\Phi$ is a collection of B-splines of degree $\vd$, then for $\Lambda=\{G_{\phi,\eta_\phi}: \eta_\phi\subseteq\support\phi,\, \phi\in\Phi\}$ the assumptions \ref{H:lambda} and \ref{H:esupp} hold with constants $\Cesupp = 1$ and
$$
\Clambda = (\vd+\vec1)^{\vec1-\vec1/p} \max_{\phi\in\Phi} \{{\hp^{\vd+\vec1/p} \vh_{\eta_\phi}^{-\vd-\vec1/p}}\} \ \prod_{i=1}^n \onorm{H_{d_i}^{-1}}.
$$
Here $H_d$ is the Hilbert matrix of order $d+1$, i.e., the element in position $(s,t)$ is  $(s+t-1)^{-1}$, and $\onorm{\cdot}$ is the $\ell^\infty$ operator norm.
Moreover, if $\Phi$ is a TPS basis satisfying \ref{H:knots} then with $\aop$ defined by \eqref{eq:operator-in-gen-form}, \ref{H:polynomial}, \ref{H:lambda}, \ref{H:phi} and \ref{H:esupp} are satisfied.
If additionally each $\eta_\phi$ is contained in some $\omega_\phi\in\mesh$, then $\aop$ is a projector onto $\SPAN\Phi$.
\end{proposition}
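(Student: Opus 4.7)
The plan is to dispatch the four claims in sequence, isolating the B-spline coefficient-recovery identity as the main lemma for the projector.

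\emph{Well-definedness.} The tensor product structure $\phi=\varphi_1\cdots\varphi_n$ together with the factorization of the blossom on tensor product monomials, $\blossom(x_1^{\alpha_1}\cdots x_n^{\alpha_n},\phi)=\prod_i\blossom(x_i^{\alpha_i},\varphi_i)$, lets us take $w_{\phi,\eta}=\prod_i w_{\varphi_i,\eta_i}$ and solve \eqref{eq:OP-poly-coefs} one factor at a time. Pulling back $\eta_i$ to $[0,1]$, expanding $\tilde w_i(t)=\sum_k c_k t^k$, and testing against monomials $t^s$ yields the linear system $H_{d_i}\vec c=\vec\lambda$ where $H_{d_i}$ is the Hilbert matrix and $\vec\lambda$ records the pulled-back univariate blossoms of $t^0,\dots,t^{d_i}$. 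Invertibility of $H_{d_i}$ delivers a unique $w_{\varphi_i,\eta_i}\in\poly_{d_i}$.

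\emph{Continuity and locality.} H\"older's inequality gives $\norm{G_{\phi,\eta_\phi}}_{*p}\le\measure{\eta_\phi}^{-1}\prod_i\norm{w_{\varphi_i,\eta_i}}_{p',\eta_i}$. On $[0,1]$ I would bound $\norm{\tilde w_i}_{p'}\le\norm{\tilde w_i}_\infty\le(d_i+1)\norm{\vec c}_\infty\le(d_i+1)\onorm{H_{d_i}^{-1}}\norm{\vec\lambda}_\infty$, and then $\norm{\vec\lambda}_\infty\le(h_{\varphi_i}/h_{\eta_i})^{d_i}$ since each $\lambda_s$ is a normalized elementary symmetric polynomial of degree $s$ in internal knots of $\varphi_i$ that, after pull-back to $[0,1]$, lie in an interval of length $h_{\varphi_i}/h_{\eta_i}\ge 1$. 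Scaling back to $\eta_i$ (which introduces factors $h_{\eta_i}^{1/p'}$) and combining with the upper bound $\norm{\phi}_p\le\prod_i h_{\varphi_i}^{1/p}/(d_i+1)^{1/p}$ from Lemma~\ref{thm:b-spline-norm} produces precisely the stated $\Clambda$. For \ref{H:esupp}, $\support G_{\phi,\eta_\phi}\subseteq\eta_\phi\subseteq\support\phi$ forces $\esupp_\phi=\support\phi$ and all three measures coincide, so $\Cesupp=1$.

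\emph{TPS consequences.} For $g\in\poly_\vd$, combining \eqref{eq:OP-blossom} and \eqref{eq:OP-poly-coefs} gives $\aop g=\sum_\vi G_{\phi_\vi,\eta_{\vi}}(g)\phi_\vi=\sum_\vi\blossom(g,\phi_\vi)\phi_\vi=g$, which is \ref{H:polynomial}. The other three hypotheses follow from the first two paragraphs together with Proposition~\ref{thm:OP-spline-reg} under \ref{H:knots}. For the projector claim, first note $\omega_\phi\subseteq\support\phi$: indeed $\eta_\phi\subseteq\omega_\phi$ has positive measure meeting $\support\phi$, the elements of $\mesh$ have pairwise disjoint interiors, and $\support\phi$ is a union of such elements. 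Then for any $s=\sum_{\phi'}c_{\phi'}\phi'\in\SPAN\Phi$, $p:=s|_{\omega_\phi}\in\poly_\vd$ and $G_{\phi,\eta_\phi}(s)=G_{\phi,\eta_\phi}(p)=\blossom(p,\phi)$, so the projector property reduces to the identity $c_\phi=\blossom(p,\phi)$.

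The main obstacle will be this last identity. I plan to use the classical TPS fact, a tensor product of the univariate statement implicit in Section~\ref{sec:OP-poly-rep-TPS}, that $\{\phi'|_{\omega_\phi}:\phi'\in\actives_{\omega_\phi}\}$ is a basis of $\poly_\vd|_{\omega_\phi}$. Comparing the two expansions of $p$ on $\omega_\phi$, namely the spline restriction $p=\sum_{\phi'\in\actives_{\omega_\phi}}c_{\phi'}\phi'|_{\omega_\phi}$ and the global polynomial identity $p=\sum_{\phi'}\blossom(p,\phi')\phi'$ (from which only the active terms survive on $\omega_\phi$), linear independence on $\omega_\phi$ matches the two termwise, yielding $c_{\phi'}=\blossom(p,\phi')$ for each $\phi'\in\actives_{\omega_\phi}$, and in particular for $\phi$ itself. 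Consequently $\aop s=\sum_\phi c_\phi\phi=s$, and $\aop$ projects onto $\SPAN\Phi$.
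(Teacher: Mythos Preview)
Your proposal is correct and follows essentially the same route as the paper: reduce to the univariate case by tensor product, solve the Hilbert system for $w_{\varphi,\eta}$, bound $\norm{G_{\varphi,\eta}}_{*p}$ via H\"older and $\onorm{H_d^{-1}}$, and combine with Lemma~\ref{thm:b-spline-norm}. The only cosmetic differences are that the paper bounds the entries of $\vec b$ using Marsden's identity \eqref{eq:OP-marsden} rather than your elementary-symmetric-polynomial description of the blossom (both give $(h_\varphi/h_\eta)^d$), and for the projector the paper states biorthogonality $G_{\phi,\eta_\phi}(\psi)=\blossom(\psi|_{\omega_\phi},\phi)=\delta_{\phi,\psi}$ directly, which is equivalent to your coefficient-matching via linear independence of the restrictions on $\omega_\phi$.
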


\begin{proof}
Due to the tensor product structure, all claims follows from the univariate case.
Let $\varphi$ be a B-spline of degree $d$ and $\eta\subseteq\support\varphi$ be the desired support. 

First we show that $w_{\varphi,\eta}$ is determined by \eqref{eq:OP-poly-coefs}.
Let $t:=\frac{x-\min\eta}{\measure{\eta}}$, then expressing $w_{\varphi, \eta}(x)$ as 
$$w_{\varphi, \eta}(x):=\sum_{j=0}^d c_j t(x)^j$$
we find
$$
G_{\varphi, \eta}(t^i)=\sum_{j=0}^d c_j\frac{1}{\measure{\eta}}\int_{\eta} t(x)^{i+j}\, dx=\sum_{j=0}^d \frac{c_j}{i+j+1}.
$$
Thus \eqref{eq:OP-poly-coefs} is equivalent to $\vc:=(c_1, \ldots, c_{d+1})$ satisfying
\begin{align}
H_d\vc =\vb,
\end{align}
where $\vb:=(\blossom(t^0,\varphi), \ldots,\blossom(t^d,\varphi))$.
Since $H_d$ is square and invertible the system is well posed and $w_{\varphi,\eta}$ is uniquely determined. 

We now prove \ref{H:lambda}. 
By H\"older's inequality, and $0\le t( x)\le 1$ for $ x\in\eta$,  we have
\begin{equation}\label{eq:op-G-phi-eta-1}
\norm{G_{\varphi, \eta}}_{*p}\le\frac{\norm{w_{\varphi, \eta}}_{p', \eta}}{\measure{\eta}}
\le \frac{\norm{w_{\varphi, \eta}}_{\infty, \eta}}{\measure{\eta}^{1/p}}\le \frac{(d+1)}{\measure{\eta}^{1/p}}\onorm{H_d^{-1}}\norm{\vb}_{\ell^\infty}.
\end{equation}
To bound $\norm{\vb}_{\ell^\infty}$ we use Marsden's identity
\begin{align}\label{eq:OP-marsden}
\blossom((y-x)^d,\phi ) = \prod_{i=2}^{d+1}(y-\theta_i(\phi))
\end{align}
to obtain 
$$\abs{\blossom(t^r,\phi )}\le\frac1{(d-r)!} \frac{\measure{\support\varphi}^r}{\measure{\eta}^{r}}\le \frac{\measure{\support\varphi}^d}{\measure{\eta}^{d}},\qquad r=0,\dots,d.$$
Now \ref{H:lambda} for univariate B-splines follows from \eqref{eq:op-G-phi-eta-1} and \eqref{eq:bspline-pnorm-estimate}.

Finally if $\Phi$ is a TPS basis and $\eta_\phi\subseteq\omega_\phi\in\mesh$ then by \eqref{eq:OP-blossom} we have 
$G_{\phi,\eta_\phi}(\psi)= \blossom(\psi|_{\omega_\phi},\phi)=\delta_{\phi,\psi}$ for all  $\psi\in\Phi$, and consequently $\aop$ is a projector.
\qed\end{proof}

\section{Space specific results}
\label{sec:SP}

In this section we describe a specific approximation operator $\aop$ for TPS, AST, THB and LR splines.
For each operator we show which of the abstract assumptions of Section~\ref{sec:framework} hold and give an upper bound for the corresponding constants.
Combining this information with Theorem~\ref{thm:sobolev} one obtains error bounds with Sobolev or reduced seminorms. 

\subsection{Application to tensor product B-splines}
\label{sec:tp-splines}

Assuming the usual mesh regularity, i.e. \ref{H:mesh-length} and \ref{H:isotropicA}, we obtain all the error bounds in Theorem~\ref{thm:sobolev} for TPS.
Compared to what can be found in the literature, see for example \cite{local-spline-approx}\cite{ANU:9260759}\cite{MR2877950}, we take into account the local mesh size for all combinations of $p$ and $q$.

\begin{theorem}\label{thm:tp-splines}
Let $\Phi$ be a TPS basis satisfying \ref{H:knots}. Then the operator $\aop$ corresponding to $\Lambda=\{\lambda_\phi=S_\phi:\, \phi\in\Phi\}$ is a projector onto $\spline=\SPAN\Phi$ and the assumptions \ref{H:polynomial}, \ref{H:lambda}, \ref{H:phi}, \ref{H:esupp}, \ref{H:duals}, \ref{H:mesh-num} and \ref{H:concave} are satisfied with 
$$
\begin{aligned}
&\Clambda=(\vd+\vec1)^{\vec1} 2^{\abs{\vd}+n},&&\Cphi =\frac {(\vd+\vec1)!\, 2^{\abs{\vsigma}}}{(\vd-{\vsigma})!}\Cknots^{\abs{\vsigma}},&&\Cesupp=1,
\\&\Cduals=(\vd+\vec1)^{\vec1},&&\Cnum=(\vd+\vec1)^{\vec1},&&\Cconcave=1.
\end{aligned}$$
\end{theorem}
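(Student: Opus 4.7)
The plan is to verify each abstract hypothesis individually, leveraging the results of Section~\ref{sec:b-splines} for the functional-related properties and then handling the combinatorial mesh properties separately using the tensor product structure. Since $\Phi$ is a TPS basis of $\spline$ satisfying \ref{H:knots}, Propositions~\ref{thm:OP-spline-reg} and \ref{thm:OP-s-phi} are directly applicable: the first gives \ref{H:phi} with the asserted $\Cphi$, and the second gives \ref{H:polynomial}, \ref{H:lambda} with $\Clambda=(\vd+\vec1)^{\vec1}2^{\abs{\vd}+n}$, \ref{H:esupp} with $\Cesupp=1$, and the fact that $\aop$ is a projector onto $\SPAN\Phi=\spline$.

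For \ref{H:esupp} it is worth noting explicitly that $\support S_\phi\subseteq\support\phi$ by construction of the Scherer--Shadrin dual functionals, so $\esupp_\phi=\bbox(\support\phi\cup\support\lambda_\phi)\cap\Omega=\support\phi$, which is a box; hence $\ha=\hp$ and $\mu(\esupp_\phi)=\ha^{\vec1}=\mu(\support\phi)$, yielding $\Cesupp=1$. The same observation immediately gives \ref{H:concave} with $\zeta_\phi:=\esupp_\phi$ a box, so that $\esupp_\phi$ is trivially star shaped with respect to $\zeta_\phi$ and $\hz=\ha$, i.e.\ $\Cconcave=1$.

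The remaining two items, \ref{H:duals} and \ref{H:mesh-num}, are purely combinatorial. For \ref{H:mesh-num}, writing $\phi=\varphi_1\cdots\varphi_n$ with $\Theta(\varphi_i)=[\xi_{i,j_i},\dots,\xi_{i,j_i+d_i+1}]$, the support $\support\phi=\prod_{i=1}^n[\xi_{i,j_i},\xi_{i,j_i+d_i+1}]$ is partitioned by the internal knot hyperplanes into at most $\prod_{i=1}^n(d_i+1)$ rectangular cells, each of which is either an element of $\mesh$ or has empty interior; this gives $\cardinality{\contained_\phi}\le(\vd+\vec1)^{\vec1}$. For \ref{H:duals}, fix $\omega\in\mesh$; in each coordinate direction $i$ there are exactly $d_i+1$ univariate B-splines $\varphi_{i,\ell}\in\{\varphi_{i,1},\dots,\varphi_{i,s_i}\}$ whose support contains the $i$-th projection of $\omega$ (standard property of $(d_i+1)$-open knot vectors), hence at most $\prod_{i=1}^n(d_i+1)=(\vd+\vec1)^{\vec1}$ tensor product B-splines satisfy $\omega\subseteq\support\phi$. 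Since $\esupp_\phi=\support\phi$, this bounds $\cardinality{\actived_\omega}$ by the same quantity.

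There is no genuine obstacle here: each hypothesis reduces either to a cited proposition or to an elementary combinatorial observation about $(d+1)$-open knot vectors, and the constants just need to be collected. The only minor bookkeeping point is to confirm $\support S_\phi\subseteq\support\phi$ explicitly so that $\esupp_\phi$ is a box, which is what allows $\Cesupp=\Cconcave=1$; everything else is immediate from the tensor product structure.
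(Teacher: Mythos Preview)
Your argument is correct and follows the same route as the paper: invoke Propositions~\ref{thm:OP-spline-reg} and \ref{thm:OP-s-phi} for \ref{H:polynomial}, \ref{H:lambda}, \ref{H:phi}, \ref{H:esupp} and the projector property, then handle \ref{H:mesh-num} and \ref{H:duals} by the tensor product count, noting $\esupp_\phi=\support\phi$. You are in fact slightly more explicit than the paper, which omits the one-line check of \ref{H:concave} that you supply.
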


\begin{proof}
	For TPS \ref{H:mesh-num} holds with $\Cnum=(\vd+\vec1)^{\vec1}$, and since $\support\phi=\esupp_\phi$ we have \ref{H:duals} with $\Cduals=(\vd+\vec1)^{\vec1}$.
	Then the thesis follows from Proposition~\ref{thm:OP-spline-reg}, Proposition~\ref{thm:OP-s-phi}.
\end{proof}

%

\subsection{Application to AST-splines}
\label{sec:ast-splines}

Cubic T-splines were introduced in \cite{sederberg2004t}\cite{sederberg2003t} for geometric modelling applications.
The idea was to reduce the number of  control points by replacing the control polygon of TPS with a T-mesh.
Depending on $\vd$, a tensor product B-spline is associated with each vertex, edge or element of the T-mesh.
The T-spline space is spanned by these B-splines.

\emph{Analysis Suitable T-splines} (AST) avoid linear dependencies by restricting the class of allowed T-meshes \cite{MR3084741}\cite{MR2871362}\cite{MR3003061}.
AST spaces can be constructed in 2D \cite{MR3341848} and are also defined for 3D domains \cite{MR3519556}.
In particular AST are dual compatible, cf. 
 \cite{MR3084741}\cite{MR3003061} and bi-orthogonal functionals for TPS are bi-orthogonal to AST.
 
By Proposition~\ref{thm:OP-spline-reg} and and the bi-orthogonality properties of $S_\phi$
we obtain the following theorem.
Compared to the result for TPS there is no a priori bound for $\Cnum$ which can be used as a measure of the mesh complexity.

\begin{theorem}\label{thm:ast-splines}
Let $\Phi$ be an AST basis satisfying \ref{H:knots}.
The operator $\aop$ corresponding to $\Lambda=\{\lambda_\phi=S_\phi:\, \phi\in\Phi\}$ is a projector onto $\spline=\SPAN\Phi$ and  \ref{H:polynomial}, \ref{H:lambda},\ref{H:phi}, \ref{H:esupp}, \ref{H:duals} and \ref{H:concave} hold with 
$$
\begin{aligned}
&\Clambda=(\vd+\vec1)^{\vec1} 2^{\abs{\vd}+n},&&\Cphi=\frac{(\vd+1)! 2^{\abs{\vsigma}}}{(\vd-{\vsigma})!}\Cknots^{\abs{\vsigma}},
\\&\Cesupp=1,&&\Cduals=(\vd+\vec1)^{\vec1},&&\Cconcave=1.
\end{aligned}$$
\end{theorem}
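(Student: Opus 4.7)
The plan is to recognize that every AST basis function is a tensor product B-spline of degree $\vd$, so each generator-level assumption reduces to the corresponding B-spline result of Section~\ref{sec:b-splines}, with the only AST-specific inputs being dual compatibility (which yields polynomial reproduction) and a count of active generators per element.

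First I would apply Proposition~\ref{thm:OP-spline-reg} directly to $\Phi$: this uses only that each $\phi\in\Phi$ is a tensor product B-spline together with \ref{H:knots}, so it gives \ref{H:phi} with exactly $\Cphi = \frac{(\vd+\vec1)!\,2^{\abs{\vsigma}}}{(\vd-\vsigma)!}\Cknots^{\abs{\vsigma}}$. Proposition~\ref{thm:OP-s-phi} applied to the same $\Phi$ with $\lambda_\phi=S_\phi$ delivers \ref{H:lambda} with $\Clambda=(\vd+\vec1)^{\vec1}2^{\abs{\vd}+n}$, and since $\support S_\phi=\support\phi$ is a box, it also delivers \ref{H:esupp} with $\Cesupp=1$; the same boxedness of $\esupp_\phi=\support\phi$ lets me choose $\zeta_\phi=\esupp_\phi$ in \ref{H:concave}, giving $\Cconcave=1$.

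Next, the polynomial reproduction part of Proposition~\ref{thm:OP-s-phi} was stated only for TPS bases, so I would re-derive \ref{H:polynomial} from the \emph{dual compatibility} of AST (\cite{MR3084741}\cite{MR3003061}): by construction of $S_\phi$ in \eqref{eq:S-phi-def} and the AST overlap structure, the bi-orthogonality $S_\phi(\psi)=\delta_{\phi,\psi}$ persists for all $\phi,\psi\in\Phi$. Since $\poly_{\vd}\subseteq\spline=\SPAN\Phi$, each $g\in\poly_{\vd}$ has an expansion $g=\sum_{\phi}c_\phi\phi$, and bi-orthogonality forces $c_\phi=S_\phi(g)$, hence $\aop g=g$. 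The identical argument on any $f\in\spline$ shows that $\aop$ is a projector onto $\spline$.

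The only step that is not a direct inheritance from Section~\ref{sec:b-splines} is \ref{H:duals}. Because $\esupp_\phi=\support\phi$, we have $\actived_\omega=\actives_\omega$, and I would invoke the standard AST T-mesh fact that on every element $\omega$ of the T-mesh, the restriction $\spline|_\omega$ equals $\poly_{\vd}$ and exactly the $(\vd+\vec1)^{\vec1}$ generators whose support covers $\omega$ restrict there to a basis of $\poly_{\vd}$ (this is where dual compatibility and the local linear independence of AST are essential). This gives $\Cduals=(\vd+\vec1)^{\vec1}$. The main obstacle is really just this local count: once it is in hand, the theorem assembles from already-proved pieces, and it is precisely the absence of a global bound of the same flavour (e.g.\ a uniform $\Cnum$) that is flagged in the remark preceding the statement.
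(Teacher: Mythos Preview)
Your proposal is correct and matches the paper's own justification, which is given only as the sentence preceding the theorem: ``By Proposition~\ref{thm:OP-spline-reg} and the bi-orthogonality properties of $S_\phi$ we obtain the following theorem,'' together with the remark that AST are dual compatible so that the TPS bi-orthogonal functionals remain bi-orthogonal to the AST basis. You have simply unpacked that one-line argument into its constituent pieces (Propositions~\ref{thm:OP-spline-reg} and~\ref{thm:OP-s-phi} for \ref{H:phi}, \ref{H:lambda}, \ref{H:esupp}; dual compatibility for the projector property and \ref{H:polynomial}; local linear independence for $\Cduals=(\vd+\vec1)^{\vec1}$; boxedness of $\esupp_\phi$ for $\Cconcave=1$), which is exactly what the paper intends.
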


\subsection{Application to THB splines}
\label{sec:hb-splines}

Hierarchical spline were introduced in \cite{Forsey:1988:HBR:378456.378512} \cite{kraft1997}.
Quasi-interpolants have been constructed in \cite{MR3627466}\cite{MR3439218} using the \emph{truncated basis} \cite{MR2925951}. See \cite{CIME} for a recent survey.

\def\hbasis{\Psi}
\def\hbas{\psi}
\def\bas{\phi}
\def\numSub{s}
\def\hsubdomain{\Omega}
\def\trunc{\mathtt{T}}
\def\hbb{\mathbb{H}}

Let $\hbasis_1, \dots, \hbasis_\numSub$ be a sequence of TPS bases  that span nested spaces, i.e. $i<j\thus\SPAN\hbasis_i\subset\SPAN\hbasis_j$ and $\Omega=\hsubdomain_1\supseteq\dots\supseteq\hsubdomain_\numSub=\emptyset$ a corresponding sequence of closed domains.
The hierarchical basis $\hbb$ is as follows
\begin{equation}
\label{eq:HBselection}
\hbb :=\bigcup_{i=1}^\numSub\{\hbas \in\hbasis_i :
\support\hbas \subseteq\hsubdomain_i\mbox{ and }
\support\hbas \cap (\hsubdomain_i
\setminus\hsubdomain_{i+1})\neq\emptyset\}.
\end{equation}
The associated box-mesh $\mesh$ contains a similar selection of elements from the tensor product meshes $\mesh_1,\dots,\mesh_s$ corresponding to $\hbasis_1,\dots,\hbasis_s$:
$$
\mesh:=\bigcup_{i=1}^\numSub\{\omega \in\mesh_i :
\omega\in \Omega_i\setminus\Omega_{i+1}\}.
$$
The truncation operator $\trunc_i:\SPAN\hbasis_i\to\SPAN\{\psi\in\hbasis_i:\hbas|_{\Omega\setminus\hsubdomain_i}\neq 0\}$ is defined as
\begin{equation}\label{eq:HB-truncation}
\trunc_i \Big(\sum_{\hbas \in\hbasis_i} c_{\hbas}\hbas\Big) :=\sum_{\substack{\hbas \in\hbasis_i :\\\hbas|_{\Omega\setminus\hsubdomain_i}\neq 0}} c_{ \hbas}\hbas.
\end{equation}
By recursive truncation one obtains the \emph{truncated basis}
\begin{equation*}
\hbb^\trunc :=\{\trunc_{\numSub}\cdots\trunc_{i+1}\hbas :
\hbas \in\hbb\cap\hbasis_i,\, i=1,\dots,\numSub\}.
\end{equation*}
It is convenient to abbreviate $\trunc_{\numSub}\cdots\trunc_{i+1}\hbas$ with $\trunc\hbas$ and to annotate the symbols referring to $\hbb^\trunc$ with a superscript $\trunc$ to distinguish them from those referring to $\hbb$.
One of the advantages of the truncated basis is that the coefficients of a polynomial $g\in\poly_{\vd}$ are the same as for TPS
\begin{equation}
\label{eq:THB-coefficients}
g=\sum_{\phi\in\hbb}\blossom(g,\phi)\trunc\phi.
\end{equation}

We say that a HB basis is $k$-\emph{admissible} if for each $\omega\in \mesh_j\cap\mesh$, $\actives_\omega$ contains only functions $\phi\in\hbasis_i$ for $j-k<i\le j$.
A THB basis is $k$-admissible if the corresponding HB basis is.
See also \cite{MR3417723}\cite{MR3545984}.

\begin{theorem}\label{thm:thb-splines}
Let $\Phi$ be a $k$-\emph{admissible} THB basis satisfying \ref{H:phi}, and $a$ be such that $\hp\le a\ho$, for all $\phi\in\hbasis_i$ and $\omega\subseteq\support \phi$, such that $\omega\in\mesh_i\cap\mesh$.
Then the operator $\aop$ corresponding to
$\Lambda=\{\lambda_{\trunc\phi}= S_\phi:\trunc\phi\in\Phi\}$ is a projector onto $\spline=\SPAN\Phi$, and  \ref{H:polynomial}, \ref{H:lambda}, \ref{H:esupp}, \ref{H:duals} and \ref{H:concave} hold with 
$$\Clambda=(\vd+\vec1)^{\vec1} 2^{\abs{\vd}+n},\qquad\Cesupp=a,\qquad\Cduals=k(\vd+\vec1)^{\vec1},\qquad\Cconcave=1.$$
\end{theorem}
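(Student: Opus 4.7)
The proof verifies each of the six assertions in turn. Polynomial reproduction \ref{H:polynomial} follows directly from the THB identity \eqref{eq:THB-coefficients}, $g=\sum_{\phi\in\hbb}\blossom(g,\phi)\trunc\phi$ for $g\in\poly_{\vd}$: since $\poly_{\vd}\subset\SPAN\hbasis_i$ at the level $i$ where $\phi$ lives and $S_\phi$ is bi-orthogonal to $\hbasis_i$ by Proposition~\ref{thm:OP-s-phi}, one has $S_\phi(g)=\blossom(g,\phi)$ and hence $\aop g=g$. For functional continuity \ref{H:lambda}, I plan to exploit the pointwise bound $0\le\trunc\phi\le\phi$, which holds because truncation subtracts a non-negative combination of non-negative tensor-product B-splines from $\phi$ (the TPS refinement coefficients being non-negative); hence $\norm{\trunc\phi}_p\le\norm\phi_p$ and the bound from Proposition~\ref{thm:OP-s-phi} transfers with the same $\Clambda=(\vd+\vec1)^{\vec1}2^{\abs{\vd}+n}$.

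For the locality statements, the key observation is that $\support S_\phi=\support\phi$ and $\support\trunc\phi\subseteq\support\phi$, so the extended support $\esupp_{\trunc\phi}=\bbox(\support\trunc\phi\cup\support S_\phi)$ equals the box $\support\phi$ itself, yielding \ref{H:concave} with $\Cconcave=1$. For \ref{H:esupp} with $\Cesupp=a$, I will use that every $\phi\in\hbb\cap\hbasis_i$ has, by \eqref{eq:HBselection}, some element $\omega\in\mesh_i\cap\mesh$ inside $\support\phi\cap(\Omega_i\setminus\Omega_{i+1})$; on such $\omega$ the truncation is inert (the subtracted terms all have support in $\Omega_{i+1}$), so $\omega\subseteq\support\trunc\phi$, and the hypothesis $\hp\le a\ho$ then yields both $\hp\le a\,\vh_{\support\trunc\phi}$ and $\measure{\support\phi}\le a^n\measure{\support\trunc\phi}$. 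For \ref{H:duals}, $k$-admissibility restricts the levels that can contribute active functions at a given element of level $j$ to the window $\{j-k+1,\ldots,j\}$; each such level contributes at most $(\vd+\vec1)^{\vec1}$ tensor-product B-splines whose support contains that element, yielding $\Cduals=k(\vd+\vec1)^{\vec1}$, and the same bound holds for $\actived_\omega$ because $\omega\subseteq\esupp_{\trunc\phi}$ iff $\omega\subseteq\support\phi$.

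The main obstacle will be the projector property $\aop|_\spline=\mathrm{id}$, which reduces to proving $S_\phi(\trunc\phi')=\delta_{\phi,\phi'}$ for all $\phi,\phi'\in\hbb$. My strategy is to split by the level gap. When $\phi\in\hbb\cap\hbasis_i$ and $\phi'\in\hbb\cap\hbasis_{i'}$ with $|i-i'|\ge k$, $k$-admissibility forces $\support\phi\subseteq\Omega_i\setminus\Omega_{i+k}$ and, symmetrically, $\support\phi'\subseteq\Omega_{i'}\setminus\Omega_{i'+k}$, so the two supports are disjoint and the integral defining $S_\phi(\trunc\phi')$ vanishes. When $|i-i'|<k$ the supports may overlap and I plan to combine the explicit truncation formula $\trunc\phi'=\phi'-\sum_\psi c_{\psi,\phi'}\psi$ (summed over the finer hierarchy functions subtracted by successive $\trunc_\ell$) with the bi-orthogonality of $S_\phi$ on $\hbasis_i$, processing the cancellations level by level; tracking this compatibility between truncation and the cross-level action of $S_\phi$ is the technical heart of the last step.
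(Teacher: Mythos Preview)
Your verification of \ref{H:polynomial}, \ref{H:lambda}, \ref{H:esupp}, \ref{H:duals} and \ref{H:concave} is correct and mirrors the paper's own (very terse) proof, with some welcome extra care: you spell out why an element $\omega\in\mesh_i\cap\mesh$ actually lies inside $\support\trunc\phi$, a fact the paper uses tacitly when it writes $\hp\le a\ho\le a\vh_{\trunc\phi}$.

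The projector claim is a different matter. The paper's proof does not address it at all; you rightly single it out as the hard step, but your plan for $|i-i'|<k$ cannot be carried through. Bi-orthogonality of $S_\phi$ at level $i$ says nothing about $S_\phi(\psi)$ when $\psi$ is a B-spline from a strictly finer level, and the ``level-by-level cancellations'' you invoke do not materialise. Concretely, take two levels and a $2$-admissible configuration: pick $\phi\in\hbb\cap\hbasis_1$ whose support reaches into $\Omega_2$ (this is exactly what $k\ge2$ allows) and $\phi'\in\hbb\cap\hbasis_2$ with $\support\phi'\subset\support\phi\cap\Omega_2$. Then $\trunc\phi'=\phi'$, while $S_\phi(\phi')=\int w_\phi\,\phi'$ is generically nonzero, since $\phi'\notin\SPAN\hbasis_1$ and the Scherer--Shadrin weight carries no orthogonality to refined B-splines; by linear independence of the $\trunc\psi$ this forces $\aop\phi'\ne\phi'$. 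The same mechanism also spoils the same-level case whenever $\support S_\phi$ meets $\Omega_{i+1}$, because $\trunc\phi'=\phi'-(\text{terms supported in }\Omega_{i+1})$ and $S_\phi$ need not annihilate those terms. So the obstacle you identified is real and not removable by the strategy you outline; the paper leaves the claim unproved, and with the present choice $\support\lambda_{\trunc\phi}=\support\phi$ it appears to hold only when $k=1$.
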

 
\begin{proof}
\ref{H:polynomial} follows from \eqref{eq:THB-coefficients}. Since $\norm{\trunc\phi}_p\le\norm{\phi}_p$ we have \ref{H:lambda} with $\Clambda=(\vd+\vec1)^{\vec1} 2^{\abs{\vd}+n}$. 
Noting that $\esupp_{\trunc\phi}=\support S_\phi=\support\phi$ we have
$\vh_{\esupp_{\trunc\phi}}=\hp\le a\ho\le a\vh_{\trunc\phi}$ and \ref{H:esupp} follows with $\Cesupp=a$.
As $\Phi$ is $k$-admissible and $\esupp_{\trunc\phi}=\support\phi$ we have $\actived_\omega^{\trunc}=\actives_\omega$ and consequently  $\cardinality{\actived_\omega^\trunc}\le k (\vd+\vec1)^{\vec1}$.
Since $\esupp_{\trunc\phi}$ is a box we have $\Cconcave=1$.
\qed\end{proof}

Note that THB require more assumptions than TPS and AST.
Comparing with \cite{MR3627466}\cite{MR3439218}, we obtain global estimates with different $p$, $q$ norms that take into consideration the local mesh size, see Theorem~\ref{thm:sobolev}.

\subsection{Application to LR-splines}
\label{sec:lr-splines}

LR-splines where introduced in \cite{MR3019748}, here we use the equivalent definition from \cite{MR3146870}.
A box mesh $\mesh$ and a function $m$ that assigns a nonnegative integer to each interface between two elements define the spline space
$$\ppoly:=\left\{\begin{aligned}
f\in L^\infty(\Omega) :  &f|_{\omega}\in\poly_{\vd}, &&&\forall\omega\in\mesh,\\
&f\in\smooth^{d_i-m(E)}(\interior E), &&& \text{ for all inter-element interfaces }E
\end{aligned} \right\},$$
where $i$ is the direction of the normal to $E$. Here we assume that $\Omega$ is a box.

Given a B-spline $\phi=\varphi_1\cdots\varphi_n$ and $s\in\{1,\dots,n\}$ we can insert a knot $\bar\theta\in(\theta_1(\varphi_s),\theta_{d+2}(\varphi_s))$ in $\Theta(\varphi_s)$
 and obtain the two B-splines $\hat\phi=\varphi_1\cdots\hat\varphi_s\cdots\varphi_n$ and $\tilde\phi=\varphi_1\cdots\tilde\varphi_s\cdots\varphi_n$ whose knot vectors are  $\Theta(\hat\varphi_s) =[\theta_1(\varphi_s),\dots,\bar\theta,\dots,\theta_{d+1}(\varphi_s)]$ and $\Theta(\tilde\varphi_s)  =[\theta_2(\varphi_s),\dots,\bar\theta,\dots,\theta_{d+2}(\varphi_s)]$, respectively.
We have
\begin{equation}\label{eq:LR-knot-insertion}
\phi=a \hat\phi + b \tilde\phi, \qquad a,b\in(0,1].
\end{equation}

Knot insertion defines a partial ordering on the set of B-splines, we write
$\phi\prec\psi$, if there exists a
sequence of B-splines $\phi_1, \dots, \phi_r$ with $\psi =\phi_1$ and $\phi=\phi_r$ such that
each $\phi_{i+1}$ is obtained from $\phi_i$ by knot insertion, see Fig.~\ref{figure:example-of-nested}.
If $\phi\prec\psi$ we define $$c_{\psi,\phi}:=\prod_{i=2}^r a_i,\qquad \text{where } \phi_{i-1}=a_{i}\phi_{i}+b_i \tilde\phi_{i}\text { as in \eqref{eq:LR-knot-insertion}.}
$$
We write $\phi\prec_\ppoly\psi$ if there exists a similar sequence whose elements are all contained in $\ppoly$.
The minimal B-splines for $\prec_\ppoly$ are called \emph{minimal support} B-splines in $\ppoly$.

\begin{figure}
\begin{center}
\def\splined#1#2#3#4#5#6#7#8#9{%
\draw [#9, thick]%
	(#1, #5) rectangle (#4, #8)%
	(#2, #5)--(#2, #8)%
	(#3, #5)--(#3, #8)%
	(#1, #6)--(#4, #6)%
	(#1, #7)--(#4, #7);%
}
\begin{tikzpicture}[scale=1.2]
\fill[\FF] (0, 0) rectangle (3, 3);
\draw[] (0, 0) grid (3, 3);
\splined{.5}{1}{2}{2.2}{.6}{1}{1.2}{1.5}{fill=\RF}
\draw[dashed,red] (2.2,0)--(2.2,3) node[pos=.6,right] {2};
\draw[dashed,red] (.5,0)--(.5,3) node[pos=.6,right] {1};
\draw[dashed,red] (.5,.6)--(2.2,.6) node[midway,above] {3};
\draw[dashed,red] (.5,1.2)--(2.2,1.2) node[midway,above] {4};
\draw[dashed,red] (.5,1.5)--(2.2,1.5) node[midway,above] {5};



\begin{scope}[xshift=5cm]
\fill[\FF] (0, 0) rectangle (3, 3);
\draw[] (0, 0) grid (3, 3);
\splined{0}{0.05}{1}{2}{.6}{1.3}{1.37}{1.7}{fill=\RF}
\node[right] at (3,1) {*};
\draw[dashed] (0,1)--(2,1);
\end{scope}
\end{tikzpicture}\end{center}
\caption[]{
Two examples with B-splines of degree $(2,2)$.
On the left, $\phi$(\tikz{\fill[\RF] (0, 0) rectangle (.3, .3);}) is nested in $\psi$(\tikz \fill[\TF](0, 0) rectangle (.3, .3);). The dashed red lines represent a possible sequence of knots insertions to obtain $\phi$ form $\psi$.
On the right, $\phi$ is not nested in $\psi$ even if $\support\phi\subseteq\support\psi$ because the marked line corresponds to a knot of $\psi$, but not to a knot of $\phi$.}\label{figure:example-of-nested}
\end{figure}

Since $\poly_{\vd}\subseteq\ppoly$, the space $\ppoly$ contains the Bernstein polynomials $B_{\vi}$, $\vi\in[\vec0,\vd]\cap\N^n$ on $\Omega$. The LR generating set $\Phi$ associated to $\ppoly$ contains the minimal support B-splines $\phi\in\ppoly$ that are obtained from the $B_{\vi}$'s using knot insertion.
The B-splines in $\Phi$ span a subset of $\ppoly$ containing $\poly_{\vd}$, but they can be linearly dependent.
\def\lrc{\mathfrak{l}}
If $g\in\poly_{\vd}$ then we have $g=\sum_{\phi\in\Phi}\lrc(g,\phi)\phi$, where, with $\blossom(g,\phi)$ as in \eqref{eq:OP-blossom}, the coefficients are given by the following recursive formula \cite[Theorem 4.4]{MR3146870} 
\begin{equation}
\label{eq:coefficients-lr}
\lrc(g,\phi):=\blossom(g,\phi)-\sum_{\psi\in N_\phi} c_{\psi, \phi} \lrc(g,\psi),\qquad N_\phi:=\{\psi\in\Phi:\phi\prec\psi\}.
\end{equation}
Unfolding the recursion we find
\begin{equation}\label{eq:coefficients-lr2}
\lrc(g,\phi)=\blossom(g,\phi)+\sum_{\psi\in N_\phi} z_{\psi,\phi}\blossom(g,\psi)
\end{equation}
 for some $z_{\psi,\phi}\in \R$.
It follows that \ref{H:polynomial} is satisfied by the collection $\Lambda=\{\lambda_\phi, \, \phi\in\Phi\}$, 
\begin{equation}
\label{eq:lr-lambda}
\lambda_\phi := S_\phi+\sum_{\psi\in N_\phi} z_{\psi,\phi} G_{\psi, \support\phi},
\end{equation}
where $S_\phi$  and $G_{\psi,\support\phi}$ are defined in \eqref{eq:S-phi-def} and \eqref{eq:def-g-phi-eta}, respectively.

\begin{theorem}\label{thm:lr-splines}
Let $\Phi$ be an LR generating system satisfying \ref{H:knots}, \ref{H:duals} and such that for all $\phi\in\Phi$, $\psi\in N_\phi$ we have $\vh_\psi\le \ell\hp$.
Then the collection $\Lambda=\{\lambda_\phi,\phi\in\Phi\}$ with $\lambda_\phi$ as in \eqref{eq:lr-lambda} defines an operator $\aop$ that satisfies \ref{H:polynomial}, \ref{H:lambda},\ref{H:phi} and \ref{H:esupp} with 
$$
\begin{aligned}
\Clambda=2^{\Cactives+\abs{\vd+\vec1}} (\vd+\vec1)^{\vec1-\vec1/p}\Cactives\ell^{\abs{\vd}+\frac np}\prod_{i=1}^n\onorm{H_{d_i}^{-1}},
\\\Cphi=\frac{(\vd+1)!\, 2^{\abs{\vsigma}}}{(\vd-{\vsigma})!}\Cknots^{\abs{\vsigma}},\qquad\Cesupp=1,\qquad \Cconcave=1.
\end{aligned}
$$
Moreover, if $\Cactives =(\vd+\vec1)^{\vec1}$, then $\Clambda=(\vd+\vec1)^{\vec1} 2^{\abs{\vd}+n}$.
\end{theorem}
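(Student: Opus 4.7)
The plan is to check \ref{H:polynomial}, \ref{H:phi}, \ref{H:esupp} (together with \ref{H:concave}) and \ref{H:lambda} in turn, leaning on Propositions~\ref{thm:OP-spline-reg}, \ref{thm:OP-s-phi} and \ref{thm:OP-g-phi-eta} and on the blossom identities \eqref{eq:coefficients-lr}--\eqref{eq:coefficients-lr2}. The first three are short. For \ref{H:polynomial}, bi-orthogonality gives $S_\phi(g)=\blossom(g,\phi)$ and \eqref{eq:OP-poly-coefs} gives $G_{\psi,\support\phi}(g)=\blossom(g,\psi)$ for every $g\in\poly_{\vd}$; substituting in \eqref{eq:lr-lambda} and comparing with \eqref{eq:coefficients-lr2} yields $\lambda_\phi(g)=\lrc(g,\phi)$, hence $\aop g=\sum_{\phi}\lrc(g,\phi)\,\phi=g$. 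The condition \ref{H:phi} is immediate from Proposition~\ref{thm:OP-spline-reg} since each $\phi\in\Phi$ is a tensor product B-spline satisfying \ref{H:knots}. For \ref{H:esupp}, $\support S_\phi=\support\phi$ and $\support G_{\psi,\support\phi}\subseteq\support\phi$ by \eqref{eq:def-g-phi-eta}, so $\support\lambda_\phi\subseteq\support\phi$; since $\support\phi$ is a box, $\esupp_\phi=\support\phi$, giving $\Cesupp=1$, and \ref{H:concave} holds with $\Cconcave=1$ by choosing $\zeta_\phi=\esupp_\phi$.

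The bulk of the work is \ref{H:lambda}. The triangle inequality on \eqref{eq:lr-lambda} gives
\begin{equation*}
\norm{\lambda_\phi}_{*p}\norm{\phi}_p\le\norm{S_\phi}_{*p}\norm{\phi}_p+\sum_{\psi\in N_\phi}\abs{z_{\psi,\phi}}\,\norm{G_{\psi,\support\phi}}_{*p}\norm{\phi}_p,
\end{equation*}
so each ingredient must be controlled. Proposition~\ref{thm:OP-s-phi} bounds the first summand by $(\vd+\vec1)^{\vec1}\,2^{\abs{\vd}+n}$. For the cardinality, $\cardinality{N_\phi}\le\Cactives$ because for any $\omega\subseteq\support\phi$ and any $\psi\in N_\phi$ we have $\omega\subseteq\support\phi\subseteq\support\psi$, so $\{\phi\}\cup N_\phi\subseteq\actives_\omega$ and \ref{H:duals} applies. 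The factor $\norm{G_{\psi,\support\phi}}_{*p}\norm{\phi}_p$ is bounded by repeating the computation in the proof of Proposition~\ref{thm:OP-g-phi-eta}, except that we pair with $\norm{\phi}_p$ rather than $\norm{\psi}_p$: a direction-by-direction application of Marsden's identity \eqref{eq:OP-marsden} bounds the relevant univariate blossoms by $(h_{\psi,i}/h_{\phi,i})^{d_i}\le\ell^{d_i}$, and combining with the pointwise bound on $w_{\psi,\support\phi}$, the norm estimate \eqref{eq:bspline-pnorm-estimate}, and the hypothesis $\vh_\psi\le\ell\hp$ produces $(\vd+\vec1)^{\vec1-\vec1/p}\,\ell^{\abs{\vd}+n/p}\prod_{i=1}^n\onorm{H_{d_i}^{-1}}$.

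The main obstacle is controlling $\abs{z_{\psi,\phi}}$. The plan is to view the recursion \eqref{eq:coefficients-lr} as the linear system $v=b-Av$ on the finite downward closure $\{\phi\}\cup N_\phi$, where $A$ is nilpotent---strictly upper triangular in any $\prec$-compatible ordering---with $A_{\phi,\psi}=c_{\psi,\phi}\in(0,1]$ for $\psi\in N_\phi$ and zero otherwise. Then $(I+A)^{-1}=\sum_{k\ge 0}(-A)^k$, so $z_{\psi,\phi}$ is an alternating sum, over $\prec$-chains $\phi=\phi_0\prec\phi_1\prec\cdots\prec\phi_k=\psi$ inside $\{\phi\}\cup N_\phi$, of products $\prod_i c_{\phi_i,\phi_{i-1}}\in(0,1]$; since the number of such chains is at most the number of subsets of $N_\phi$, we obtain $\abs{z_{\psi,\phi}}\le 2^{\cardinality{N_\phi}}\le 2^{\Cactives}$. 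Combining all estimates and absorbing $2^{\abs{\vd}+n}\le 2^{\abs{\vd+\vec1}}$ into the tail gives the stated $\Clambda$. For the \emph{moreover} clause, the regime $\Cactives=(\vd+\vec1)^{\vec1}$ corresponds to the genuine TPS situation in which $N_\phi=\emptyset$ for every $\phi$, so $\lambda_\phi=S_\phi$ and Proposition~\ref{thm:OP-s-phi} directly yields $\Clambda=(\vd+\vec1)^{\vec1}\,2^{\abs{\vd}+n}$.
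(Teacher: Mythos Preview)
Your argument is correct and follows essentially the same route as the paper: \ref{H:polynomial} from the blossom identities, \ref{H:phi} from Proposition~\ref{thm:OP-spline-reg}, \ref{H:esupp} and \ref{H:concave} from $\esupp_\phi=\support\phi$ being a box, and \ref{H:lambda} from the triangle inequality on \eqref{eq:lr-lambda} combined with path-counting for $\abs{z_{\psi,\phi}}$. One cosmetic difference: to bound $\norm{G_{\psi,\support\phi}}_{*p}\norm{\phi}_p$ the paper first observes that $\phi\prec\psi$ implies $\norm{\phi}_p\le\norm{\psi}_p$ (immediate from \eqref{eq:LR-knot-insertion} with $a,b\in(0,1]$) and then invokes Proposition~\ref{thm:OP-g-phi-eta} verbatim, whereas you redo the Hilbert-matrix computation pairing directly with $\norm{\phi}_p$. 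Both are fine; your version is in fact marginally sharper.

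The one genuine gap is the \emph{moreover} clause. Saying that $\Cactives=(\vd+\vec1)^{\vec1}$ ``corresponds to the genuine TPS situation in which $N_\phi=\emptyset$'' is not a proof: you must show that if every element has exactly $(\vd+\vec1)^{\vec1}$ active B-splines then no $\phi\in\Phi$ can be $\prec$-below another $\psi\in\Phi$. This is a nontrivial structural fact about LR spaces and the paper does not prove it either; it cites \cite[Theorem~4]{MR3370382}. You should cite the same result (or reproduce its argument) rather than assert the conclusion.
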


\begin{proof}
\ref{H:polynomial} follows from \eqref{eq:lr-lambda}. Proposition~\ref{thm:OP-spline-reg} implies \ref{H:phi}.
Since $\esupp_\phi$ is a box we deduce \ref{H:esupp} with $\Cesupp=1$ and \ref{H:concave} with $\Cconcave=1$.
We need to show \ref{H:lambda}.
From \eqref{eq:lr-lambda} we have
\begin{equation}
\label{eq:proof-lr-continuity-3}
\norm{\lambda_\phi}_{*p} \le\big (\norm{S_\phi}_{*p}+\sum_{\psi\in N_\phi}\abs{z_{\phi, \psi}}\norm{G_{\psi, \support\phi}}_{*p}\big ).
\end{equation}
The $z_{\psi, \phi}$ are sums of products of $c_{\eta, \beta}$. Consequently they are sums of terms in $[-1,1]$. Therefore $\abs{z_{\psi, \phi}}$ is bounded by the number of terms and we have
\begin{equation}
\label{eq:proof-lr-continuity-2}
\abs{z_{\psi, \phi}}\le \cardinality{\{\text{oriented paths in $N_\phi$ from $\psi$ to $\phi$}\}}\le 2^{\cardinality{N_\phi}-1}\le 2^{\Cactives-2}.
\end{equation}
Finally, $\phi\prec\psi$ implies $\norm{\phi}_p\le \norm{\psi}_p$, and using \eqref{eq:proof-lr-continuity-3}, \eqref{eq:proof-lr-continuity-2}, Proposition~\ref{thm:OP-g-phi-eta} and $\vh_\psi\le\ell\hp$ we obtain \ref{H:lambda} with the claimed $\Clambda$.

If $\Cactives=(\vd+\vec1)^{\vec1}$ then by \cite[Theorem 4]{MR3370382}, $N_\phi=\emptyset$ and $\lambda_\phi=S_\phi$.
\qed\end{proof}

\section{Closing remarks}
\label{sec:remarks}

The polynomial approximation assumption \ref{H:approx} can be extended to allow for more general bounding terms.
A possibility are fractional order Sobolev seminorms, or derivative dependent summability $q_{\vk}$ as in \cite{dupont_scott}.
We do not foresee major difficulties in such extensions, but surely they would be notationally heavy.

A second remark is that the results are based on local bounds. In particular the cardinality of $\Phi$ and the boundedness of $\Omega$ are not used in the proofs. Note however that the embedding of $L^q(\Omega)\subseteq L^p(\Omega)$  for the case $p\le q $ requires $\measure{\Omega}<\infty$.

\section*{Acknowledgement}
The first author has received funding from the European Research Council under the European Union's Seventh Framework Programme (FP7/2007-2013) / ERC grant agreement 339643.

\begin{table}[ht]
\renewcommand{\arraystretch}{1.9}
\begin{tabular}{lll}
\multicolumn{3}{l}{Assumptions for space independent estimates}\\
 \ref{H:polynomial} &\emph{polynomial reproduction}& $\aop g=g$, $\forall g\in\poly$\\
 \ref{H:lambda} &\emph{functional continuity}& $\norm{\lambda_\phi}_{*p}\norm{\phi}_p\le\Clambda$\\
 \ref{H:phi} &\emph{generators' regularity} & $\norm{\partial^{\vsigma}\phi}_{p, \omega}\le\Cphi\hp^{-{\vsigma}}{\measure{\omega}^{\frac1p}}{\measure{\support\phi}^{-\frac1p}}\norm{\phi}_p$\\
 \ref{H:esupp} &\emph{locality of $\aop$}& $\ha\le\Cesupp\hp$ and $\ha^{\vec1}\le\Cesupp^n\mu(\support\phi)$\\
 \ref{H:approx} && {$
\norm{\partial^{\vbeta}(f-\paop_\omega f)}_{p, \eta}\le\Capprox\dfrac{\ho^{{\vgamma}}}{\he^{{\vgamma}}}\sum_{\vk\in K_{\vbeta}}\he^{\vk-{\vbeta}+\vpq}\norm{\partial^{\vk} f}_{q, \eta}$}\\
 \ref{H:duals} &\emph{bound on $\esupp_\phi$ overlaps}& $\cardinality{\actived_\omega}\le\Cduals$\\
 \ref{H:mesh} &\emph{mesh regularity}& $\Gamma_\phi:=\big(\sum_{\omega\in\contained_\phi}\ho^{{\vgamma}p+\vec1}\ha^{-{\vgamma} p-\vec1}\big)^{1/p}\le\Cmesh $\\
 \ref{H:isotropicO} &\emph{element shape regularity}& $\vmax\ho\le\CisotropicO\vmin\ho$\\
 \ref{H:isotropicA} &\emph{shape regularity of $\esupp_\phi$}& $\vmax\ha\le\CisotropicA\vmin\ha$\\
\hline
\multicolumn{3}{l}{Mesh regularity assumptions that imply \ref{H:mesh}}\\
 \ref{H:mesh-num} &\emph{$\#$elements in $\support\phi$}& $\cardinality{\contained_\phi}\le\Cnum$\\
 \ref{H:mesh-length}&\emph{mesh quasi uniformity}& $\ha\le\Clength\ho$\\
\hline
\multicolumn{3}{l}{Assumptions that imply \ref{H:approx} using averaged Taylor expansion}\\
 \ref{H:concave} & & $\esupp_\phi$ star shaped and  $\ha\le\Cconcave\hz$\\
\hline
\multicolumn{3}{l}{B-spline specific assumption}\\
 \ref{H:knots} & \emph{knot vector regularity} & $\Delta_{\phi, i, d_i+1}\Delta_{\phi, i, d_i-\sigma_i+1}^{-1}\le\Cknots $\\
\end{tabular}
\renewcommand{\arraystretch}{1}
\caption{Table of the assumptions and corresponding constants.}\label{table:hypothesis}
\end{table}

\bibliographystyle{abbrv}
\bibliography{lr_approx}
\end{document}